\author{author}
\date{\today}
\title{title}
\theoremstyle{plain} 
\newtheorem{thm}{Theorem}[section] 
\newtheorem*{thm*}{Theorem}
\newtheorem{cor}[thm]{Corollary} 
\newtheorem{lem}[thm]{Lemma} 
\newtheorem{prop}[thm]{Proposition} 
\theoremstyle{definition} 
\newtheorem{defn}{Definition}
\newtheorem*{qst*}{Question}
\theoremstyle{remark} 
\newtheorem{rmk}{Remark} 
\theoremstyle{remark}
\newcommand{\D}[1]{{\mathbb#1}}
\newcommand{\C}{{\mathcal{C}}}
\newcommand{\Pw}{{\D{P}(\bigwedge^2 W^*)}}
\newcommand{\Pd}{{\D{P}(\Delta)}}
\newcommand{\git}{\mathbin{
  \mathchoice{/\mkern-6mu/}
    {/\mkern-6mu/}
    {/\mkern-5mu/}
    {/\mkern-5mu/}}}
\newcommand*{\sheafhom}{\mathscr{H}\kern -.5pt om}
\newcommand*{\sheafext}{\mathscr{E}\kern -.5pt xt}
\newcommand{\F}{{\mathcal{F}}}
\newcommand{\OOO}{\mathcal{O}_{\D{P}^4}}
\newcommand{\mapstoo}[1]{\:
	\xymatrix@1{\ar@{|->} [r] ^-{#1}&}\:}
\newlist{myenumerate}{enumerate}{1}
\setlist[myenumerate,1]{label=(\roman*)}
\newcommand{\DM}[1]{\textcolor{black}{#1}}
\renewcommand{\restriction}{|}
\renewcommand\emptyset{\varnothing}
\newcommand{\codim}{\textup{codim\,}}
\newcommand{\Gr}{\textup{Gr}}
\newcommand{\Hom}{\textup{Hom\,}}
\newcommand{\Pf}{\textup{Pf\,}}
\newcommand{\rk}{\textup{rk\,}}
\newcommand{\Sing}{\textup{Sing\,}}
\newcommand{\Supp}{\textup{Supp\,}}
\newcommand{\coker}{\textup{coker\,}}
\newcommand{\Stab}{\textup{Stab}}
\newcommand{\im}{\textup{Im}}
\newcommand{\Pvw}{{\D{P}({V_{n+1}}^*\otimes \bigwedge^2 W^*)}}
\newcommand{\Pa}{{\D{P}(A)}}
\newcommand{\Pb}{{\D{P}(B)}}
\begin{document}
	\title{\textbf{Stable linear systems of skew-symmetric forms of generic rank $\le 4$}}
	\author{Gaia Comaschi}

	\date{}

\maketitle

\begin{abstract}
	\noindent
Given a 6-dimensional complex vector space $W$, we consider linear systems of skew-symmetric forms on $W$. 
The $n$-dimensional linear systems this kind, that can also be interpreted as $n$-dimensional linear subspaces of $\D{P}(\bigwedge^2 W^*)$, are parametrized by the projective space $\D{P}(\D{C}^{n+1}\otimes \bigwedge ^2 W^*)$. 
We analyze the $SL(W)$ action on this projective space and the GIT stability of linear systems with respect to this action.
We present a classification of all stable orbits of linear systems whose generic element is a tensor of rank 4. 
\end{abstract}
\section{Introduction}
Given a complex vector space $W$ of dimension 6, we study linear systems of skew-symmetric forms on $W$. Linear systems of this kind 
may also be considered as linear subspaces of $\D{P}(\bigwedge^2 W^*)$ or, yet again,
as $6\times 6$ skew-symmetric matrices of linear forms.  Given now a $(n+1)$-dimensional complex vector space $V_{n+1}$, we notice that a $n$-dimensional linear system of alternating forms on $W$ defines a linear embedding $\D{P}(V_{n+1})\to \Pw$ providing a point in the projective space
$\D{P}(V_{n+1}^*\otimes \bigwedge^2W^*)$. The group $SL(W)$ acts on $\D{P}(V_{n+1}^*\otimes \bigwedge^2 W^*)$ and it is then rather natural to ask the question whether it is possible to classify all the orbits and if this is the case, to actually present such a classification.

The classification of linear spaces of forms is a subject that has been considered in various contexts. 
The first results already appeared in the classical work of Weierstrass \cite{We} and Kronecker \cite{Kr}; a more modern formulation of the subject was later presented by Sylvester \cite{Syl} using the language of vector bundles on projective spaces. This approach has been recently adopted in a series of works by Boralevi, Faenzi et al. 
(\cite{BFM}, \cite{BFL}, \cite{BM}.) 

It is easy to classify pencils of alternating matrices of size 6 but things reveal to be much more complicated already in dimension 2 (see \cite{MM} for a classification of planes of tensors having constant rank equal to 4). For this reason we might then start by restricting only to certain orbits, those that are \textit{stable} (in the sense of Mumford's geometric invariant theory (GIT)).
In the present paper we give a complete classification of stable orbits of linear systems whose generic element is a tensor of rank 4. Our methods mainly rely on the study of the geometry of the corresponding linear subspaces of $\Pw$, namely of linear subspaces of the Pfaffian hypersurface $\Pf\subset \Pw$.

Here is the plan of the paper. After recalling some basics on the geometry 
of the Grassmanniann $\Gr(2,W^*)$ of lines in $\D{P}(W^*)\simeq \D{P}^5$, and of the Pfaffian hypersurface $\Pf$, we introduce $n$-dimensional linear systems of alternating forms 
focusing in particular on those of generic rank $	\le 4$.

Section 2 is devoted to the study of GIT stability. 
Adapting the method used by Wall \cite{Wall}, we formulate a criterion for (semi)stability and we show that
this criterion allows us to characterize the instability and the non-stability of a linear space $\Pa \subset \Pw$ by the existence of certain subspaces of its orthogonal $\Pa^{\perp}\subset \D{P}(\bigwedge^2 W)$ (this ``geometric formulation'' of stability will be the one we will mainly use). 

In section 3 we present some necessary condition for the stability of a linear space \mbox{$\D{P}(A) \subset \Pf$}, proving:
\begin{thm*}[Theorem \ref{dim_le3}]Let $\Pa\subset \Pf$ be a $n$-dimensional stable linear space. Then the following hold:
	\begin{itemize}
		\item $n\ge 3$;
		\item a general plane $\Pb \subset \Pa$ is such that $\Pb \cap \Gr(2,W^*)=\emptyset$;
		\item any plane $\Pb \subset \Pa$ such that $\Pb \cap \Gr(2,W^*)=\emptyset$ is $SL(W)$-equivalent to the plane $\pi_g$:
		$$\pi_g=\langle e_1\wedge e_4 + e_2\wedge e_5, \ e_1 \wedge e_6+ e_3\wedge e_5, \ e_2\wedge e_6 - e_3\wedge e_4 \rangle$$
		for a basis $e_1,\ldots e_6$ of $W^*$;
		\item $n\le 5$.
	\end{itemize} 
	\end{thm*}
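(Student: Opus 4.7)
The unifying method throughout is the geometric stability criterion of Section~2, adapted from Wall's approach: non-stability of $\Pa$ is characterized by the existence of certain subspaces in the orthogonal $\Pa^{\perp}\subset \D{P}(\bigwedge^2 W)$. All four assertions reduce to producing or ruling out such destabilizers.

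For the lower bound $n\ge 3$ I would treat $n=0,1,2$ case by case. For $n=0$ it is immediate since the stabilizer of a rank-$4$ form contains an $SL_2$ acting on its kernel, preventing a single point from being stable. For $n=1$ I would run through the Kronecker--Weierstrass normal forms for pencils of skew-symmetric $6\times 6$ matrices of generic rank $4$ and, on each normal form, exhibit a block decomposition yielding a one-parameter subgroup of $SL(W)$ that violates the criterion, equivalently a destabilizer in $\Pa^{\perp}$. For $n=2$ I would proceed analogously, using the classification of planes of generic rank $4$ from \cite{MM} together with a short analysis of the variable-rank cases.

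For the second bullet, I would show that a stable $\Pa$ must satisfy $\dim(\Pa\cap \Gr(2,W^*))\le n-3$, so that a general plane $\Pb\subset \Pa$ misses $\Gr(2,W^*)$ by dimension count. The substantive direction is the converse: if the rank-$2$ locus of $\Pa$ were of dimension $\ge n-2$, then the incidence with lines of $\Gr(2,W^*)$ would, via the duality between $\Pa$ and $\Pa^{\perp}$, produce a configuration forcing a destabilizing subspace in $\Pa^{\perp}$. For the subsequent uniqueness of $\Pb$ up to $SL(W)$, I would invoke the classification from \cite{MM} stating that planes of constant rank $4$ in $\D{P}(\bigwedge^2 W^*)$ form a single $SL(W)$-orbit; a direct proof normalizes one generator $\omega_1$ into symplectic form on a $4$-dimensional subspace of $W^*$ and then uses the constraint that $\lambda\omega_1+\mu\omega_2+\nu\omega_3$ stays of rank at most $4$ for all $(\lambda,\mu,\nu)$ to pin down the remaining generators to the shape $\pi_g$ up to the stabilizer of $\omega_1$ in $SL(W)$.

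For the upper bound $n\le 5$ I would argue by contradiction. Assuming $n\ge 6$, the previous bullets show that a generic point of $\Pa$ is contained in many planes $SL(W)$-equivalent to $\pi_g$, and the explicit description of $\pi_g$ gives a rigid picture of the kernel map $\omega\mapsto \ker\omega\in \Gr(2,W)$ on the rank-$4$ locus of $\Pa$: each such kernel is a $2$-plane with one generator in each of a pair of transverse $3$-subspaces of $W$ dictated by $\pi_g$. Extending $\pi_g$ inside the rank-$\le 4$ locus to dimension $\ge 6$ imposes enough linear conditions on the additional generators to force a common invariant flag in $W$, which in turn yields a destabilizing subspace in $\Pa^{\perp}$. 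I expect the main obstacle of the entire argument to lie in this final step: turning the rigid kernel structure inherited from $\pi_g$ into the explicit destabilizer incompatible with stability is where the delicate linear-algebraic work is concentrated.
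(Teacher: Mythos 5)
There are two genuine gaps in your proposal. The first is the assertion, used for the third bullet, that planes of constant rank $4$ in $\Pw$ form a single $SL(W)$-orbit. This is false: the Manivel--Mezzetti classification in \cite{MM} produces \emph{four} orbits, represented by $\pi_g$, $\pi_t$, $\pi_p$ and $\pi_5$, and the entire content of the third bullet is that three of these four are incompatible with stability of the ambient $\Pa$. Your ``direct proof'' normalizing one generator and pinning the others down to the shape of $\pi_g$ would therefore be proving a false statement. The actual work required (Proposition \ref{plane_unst} in the paper) is to show that if $\Pb$ is equivalent to $\pi_t$, $\pi_p$ or $\pi_5$, then the span of the Gauss image $\langle\gamma(\Pb)\rangle$, which sits inside $\Pa^{\perp}$ by Lemma \ref{lemma_gamma}, already contains a destabilizing subspace (a $\D{P}(\bigwedge^2 L_{\omega}^{\perp})$, a $\Gr(2,3)$-plane, or a Schubert $4$-plane $\D{P}(v\wedge W)$ respectively). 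This same classification is also the engine behind $n\ge 3$: one must additionally check that $\pi_g$ itself is non-stable, which the paper does by explicitly exhibiting the $3$-dimensional isotropic subspaces $U$ with $\D{P}(\bigwedge^2 U)\subset \pi_g^{\perp}$ (Lemma \ref{plane_isotropic}); your sketch for $n=2$ does not engage with this, nor with the reduction (Proposition \ref{codim_int}) showing that a stable $\Pa$ of dimension $\le 2$ must have constant rank $4$ in the first place.

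The second gap is the upper bound $n\le 5$, where your argument is only a program and, as you concede, the main difficulty is unresolved. The paper's route is quite different and avoids any ``invariant flag'' analysis: once $\Pa\supset\Pb$ with $\Pb$ of general type, the inclusion $\Pa\subset\Pf$ forces $\Pa\subset\Lambda:=\langle\gamma(\Pb)\rangle^{\perp}\simeq\D{P}^8$, and one computes that $\Lambda\cap\Gr(2,W^*)$ is exactly a rational normal scroll $Z\simeq S_{(2,2,2)}$ of dimension $3$. Since stability forces $\Pa\cap\Gr(2,W^*)$ to have pure dimension $n-3$ and this locus lies in the irreducible $3$-fold $Z$, one gets $n\le 6$; the case $n=6$ is then excluded because it would force $\Pa\cap\Gr(2,W^*)=Z$, whereas $Z$ spans all of $\Lambda\simeq\D{P}^8$ and cannot lie in a $\D{P}^6$. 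Without the construction of $Z$ (or some equivalent bound on the dimension of the rank-$2$ locus inside $\Lambda$), your contradiction argument for $n\ge 6$ does not go through. The remaining parts of your proposal --- the use of destabilizers in $\Pa^{\perp}$ as the unifying criterion, and the codimension-$3$ bound on $\Pa\cap\Gr(2,W^*)$ giving the second bullet --- are consistent with the paper's approach.
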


The starting point for the proof of the theorem is the following. Since we are dealing with subspaces $\Pa$ of the Pfaffian hypersurface $\Pf$, it is rather natural to ask if ever these spaces meet the Grassmannian $\Gr(2,W^*)$ and consequently to detect how these intersections might affect the stability of the entire $\Pa$.
First of all we prove that if $\Pa \subset \Pf$ is stable and if ever there exists a tensor $\omega \in \Pa$ of rank 2, the locus $\Pa \cap \Gr(2,W^*)$ must have codimension 3.
We deduce then that if $\Pa$ has dimension $n\le 2$, it must have constant rank equal to 4; on the contrary, if its dimension is at least 3, $\Pa$ must contain a plane of tensors that all have rank 4.
Both cases can be dealt with by studying planes of alternating matrices of constant rank 4. To this aim we use a result proved by Manivel-Mezzetti in \cite{MM} that states the existence of only four $SL(W)$ orbits of planes of such kind, and we analyze their stability. 
We initially show that none of these orbits is stable and that this prohibits the stability of any linear system $\Pa \subset \Pf$ of dimension less than or equal to 2.
Accordingly a stable $\Pa \subset \Pf$ has dimension at least 3; we prove then that $\Pa$ always contains a plane $\Pb$ of constant rank 4 and that moreover the stability assumption implies that $\Pb$ may only belong to one of the four $SL(W)$-orbits, namely that of $\pi_g$.
These preliminary results are the main ingredients 
of our classification. They imply in particular that a stable $\Pa \simeq \D{P}^n, \ \Pa\subset \Pf$ can always be represented in the form:
$$\Pa=\langle \Pb, \omega_3, \ldots 	,\omega_n\rangle$$
with $\Pb \in SL(W)\cdot \pi_g$ and with the $\omega_i$s, $i=3,\ldots,n$ belonging to a $(n-3)$-dimensional component of $\Pa \cap \Gr(2,W^*)$.  
From the hypothesis $\Pa \subset \Pf$, we deduce that such an irreducible component must be contained in a 3-dimensional scroll $Z\simeq S_{(2,2,2)}$ carrying the structure of a conic bundle over $\D{P}(B)$, $Z\xrightarrow{\pi} \Pb$. 
The study of the subvarieties of $Z$ will allow us to conclude that there exist no stable linear systems of generic $\le 4$ and of dimension $> 5$, and this will enable us to complete our classification.

In the subsequent sections we present the actual classification of stable orbits. 
We find only one stable orbit in dimension three:
\begin{thm}\label{dim_3}
		Let $\Pa \subset \Pf$ be a stable three dimensional linear space. Then $\Pa$ is $SL(W)$-equivalent to the space:
		$$\langle e_1\wedge e_4 + e_2\wedge e_5, e_1 \wedge e_6 + e_3\wedge e_5, e_2 \wedge e_6 - e_3 \wedge e_4, (e_1 -e_5) \wedge (e_2 +e_4)\rangle$$
 and $\Pa \cap \Gr(2,W^*)$ consists of two distinct points. 
	\end{thm}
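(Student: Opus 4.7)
By Theorem~\ref{dim_le3}, after a suitable $SL(W)$-translation we may assume that $\Pa$ contains the plane $\pi_g$. The discussion preceding the theorem shows that, since $\Pa$ is three-dimensional, one can write $\Pa=\langle \pi_g,\omega\rangle$ with $\omega$ in a zero-dimensional component of $\Pa\cap \Gr(2,W^*)$; in particular $\omega$ is a rank-$2$ tensor. The requirement $\Pa\subset \Pf$ means that for every $p\in \pi_g$ the line $\langle p,\omega\rangle$ lies in $\Pf$, which forces $\omega$ to sit inside the three-dimensional scroll $Z\simeq S_{(2,2,2)}$ introduced above.

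The next step is to study the action of the stabiliser $H=\Stab_{SL(W)}(\pi_g)$ on $Z$. Because $H$ preserves the conic-bundle structure $\pi\colon Z\to \pi_g$, its orbits on $Z$ can be enumerated by first computing the $H$-orbits on the base $\pi_g$ and then the residual action on each conic fibre. For every candidate orbit we test the stability of the resulting $\Pa$ by means of the geometric reformulation of GIT (semi)stability given in Section~2, i.e.\ by searching for destabilising subspaces in $\Pa^{\perp}\subset \D{P}(\bigwedge^2 W)$. This analysis singles out exactly one stable orbit, represented by $\omega=(e_1-e_5)\wedge(e_2+e_4)$; a direct check shows that the corresponding $\Pa$ is contained in $\Pf$ and that no subspace of $\Pa^{\perp}$ violates the stability inequality.

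To identify $\Pa\cap \Gr(2,W^*)$ we write a generic point of $\Pa$ as $\alpha_1 A_1+\alpha_2 A_2+\alpha_3 A_3+\beta\omega$, where $A_1,A_2,A_3$ are the generators of $\pi_g$ listed in Theorem~\ref{dim_le3}, and impose that every $4\times 4$ sub-Pfaffian of the associated $6\times 6$ skew matrix vanish. The fifteen sub-Pfaffian equations reduce to $\alpha_2=\alpha_3=0$ together with $\alpha_1(\alpha_1+2\beta)=0$, so the system has precisely two projective solutions: the tensor $\omega$ itself and $\omega-2A_1=(e_1+e_5)\wedge(e_2-e_4)$.

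The hardest part is the orbit analysis of the second paragraph: one has to rigorously exclude every $H$-orbit on $Z$ other than the one containing $(e_1-e_5)\wedge(e_2+e_4)$, and for this the stability criterion of Section~2 plays the decisive role. The remaining steps reduce to explicit bookkeeping with Pfaffians.
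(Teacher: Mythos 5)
Your overall strategy coincides with the paper's: reduce to $\Pa=\langle \pi_g,\omega\rangle$ with $\omega$ a rank-2 point of the scroll $Z$, use the conic fibration $\pi\colon Z\to\pi_g$ to normalize $\omega$ into a single fibre, and decide stability by hunting for destabilizing subspaces in $\Pa^{\perp}$. Your sub-Pfaffian computation of $\Pa\cap\Gr(2,W^*)$ is correct (the fifteen equations do reduce to $\alpha_2=\alpha_3=0$ and $\alpha_1(\alpha_1+2\beta)=0$, giving the two points $\omega$ and $\omega-2A_1=(e_1+e_5)\wedge(e_2-e_4)$), and is in fact more explicit than what the paper records.

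The gap is that the decisive step --- deciding, for each $\omega$ in a fibre $\mathfrak{C}_{\omega_0}=\pi^{-1}(\omega_0)$, whether $\langle\pi_g,\omega\rangle$ is stable --- is asserted (``this analysis singles out exactly one stable orbit'') but never carried out, and it is not a formality. The paper does it via Lemma \ref{plane_isotropic}: by Proposition \ref{stab_s1} and Remark \ref{2_then_3}, the only possible destabilizing datum for a space containing $\pi_g$ is a $3$-dimensional $U<W$ isotropic for every form in $A$; such $U$ are parametrized by points of $\D{P}^1\times\D{P}^1\times\D{P}^1$ subject to three quadratic equations, and writing $\omega=(t_0e_1-t_1e_5)\wedge(t_0e_2+t_1e_4)$ the condition $\omega\restriction_U\equiv 0$ adds a fourth; the resulting system has a solution exactly when $t_0t_1=0$, i.e.\ exactly when $\omega$ is one of the two points of $\mathfrak{C}_{\omega_0}$ whose tangent line passes through $\omega_0$ (equivalently, when $\Pa\cap\Gr(2,W^*)$ degenerates to a double point, which already contradicts Proposition \ref{codim_int}). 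Without this computation, or an equivalent one, you have neither excluded the non-stable points of the fibre nor verified stability of the remaining ones. In addition, your framing via $\Stab_{SL(W)}(\pi_g)$-orbits silently uses two transitivity statements that must be checked: that the stabilizer moves any point of the base $\pi_g$ to $\omega_0$ (needed to reduce to one fibre), and that it acts transitively on $\mathfrak{C}_{\omega_0}$ minus the two tangency points --- the latter is exactly what makes all stable $\Pa$ a single $SL(W)$-orbit, which is part of the statement being proved, so it cannot simply be cited.
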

A stable 3-dimensional linear space $\Pa$ must indeed be of the form $\Pa=\langle \Pb, \omega_3\rangle$ with \mbox{$\Pb\in SL(W)\cdot \pi_g$} and $\omega_3$ a point lying in $Z$; from the construction of $Z$, we know that $\omega_3$ belongs to a unique fiber of $\pi$, so that there exists a unique $\omega \in \Pb$ for which we have $\omega_3 \in \pi^{-1}(\omega)$.
We show that $\Pa$ will be stable whenever $\omega\not\in \D{T}_{\omega_3}\Gr(2,W^*)$.

The classification of stable 3-planes will help us to treat the higher dimensional cases since a $n$-dimensional $\Pa \subset \Pf$, $n=4, 5$ will be stable whenever it contains a stable linear subspace of dimension 3.
We prove that there exist  two stable orbits of 4-dimensional linear systems of generic rank 4:

 \begin{thm}\label{dim_4}
			Let $\Pa$ be a stable 4-dimensional linear system of \DM{skew-symmetric form}s of generic rank less then or equal to four. Then $\Pa$ is $SL(W)$\DM{-equivalent} to one of the following two spaces:
		$$ \DM{\langle}e_1\wedge e_4 + e_2\wedge e_5, \ e_1 \wedge e_6+ e_3\wedge e_5, \ e_2\wedge e_6 - e_3\wedge e_4, \ e_1\wedge e_2, \ e_4\wedge e_5\DM{\rangle}$$
		or 
		$$ \DM{\langle}e_1\wedge e_4 + e_2\wedge e_5, \ e_1 \wedge e_6+ e_3\wedge e_5, \ e_2\wedge e_6 - e_3\wedge e_4, \ (e_1-e_5)\wedge (e_2+e_4), \ (e_1-e_5)\wedge (e_3+e_6)\DM{\rangle}.$$
		Furthermore in the first case $\Pa$ meets the Grassmannian $\DM{\Gr}(2,W^*)$ along a smooth conic isomorphic to $\D{P}^2\cap \DM{\Gr}(2,4)$; in the second case $\Pa$ intersects $\DM{\Gr}(2,W^*)$ along a pair of disjoint lines.  
		\end{thm}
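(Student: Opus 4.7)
The plan is to use Theorem \ref{dim_le3} to reduce the problem to a classification inside the scroll $Z$, then to work through the possible $1$-dimensional components of $\Pa\cap\Gr(2,W^*)$ case by case.

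From Theorem \ref{dim_le3}, a stable $4$-dimensional linear space may be written $\Pa=\langle \Pb,\omega_3,\omega_4\rangle$ with $\Pb$ in the $SL(W)$-orbit of $\pi_g$ and $\omega_3,\omega_4$ spanning a line inside a $1$-dimensional irreducible component $C\subset\Pa\cap\Gr(2,W^*)$; moreover $C$ lies in the conic-bundle scroll $\pi\colon Z\simeq S_{(2,2,2)}\to\Pb$ of Section 3. The problem thus reduces to classifying such curves $C$ modulo the stabilizer of $\Pb$ in $SL(W)$. Since $\dim\Pa=4$ and $\Pa\supset\Pb$, the span $\langle C\rangle$ has projective dimension at most $2$, so $C$ is either a line in $\Gr(2,W^*)$ or a plane conic. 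I would then distinguish two cases according to the image $\pi(C)\subset\Pb$.

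\emph{Case (i): $\pi(C)$ is a point $\omega\in\Pb$.} Then $C$ equals the fiber $\pi^{-1}(\omega)$, a smooth conic in $Z$, and the plane $\langle C\rangle$ meets $\Pb$ exactly at $\omega$. Using the stabilizer of $\pi_g$ to normalize $\omega$ to $e_1\wedge e_4+e_2\wedge e_5$, the fiber is spanned by the two decomposable tensors $e_1\wedge e_2$ and $e_4\wedge e_5$, yielding the first $\Pa$ of the theorem. The intersection $\Pa\cap\Gr(2,W^*)$ is then the conic $C$, realized as the rank-$2$ locus inside $\D{P}^2=\langle C\rangle$, that is, as $\D{P}^2\cap\Gr(2,4)$.
\emph{Case (ii): $\pi(C)$ is a curve.} Then $\pi(C)$ must be a line of $\Pb$ and $C$ itself a line of $\Gr(2,W^*)$, i.e.\ an $\alpha$- or $\beta$-pencil. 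A short analysis using the explicit description of $Z$ excludes the $\beta$-pencils and shows that the admissible $\alpha$-pencils form a single orbit under the stabilizer of $\pi_g$, represented by $C=\langle(e_1-e_5)\wedge(e_2+e_4),(e_1-e_5)\wedge(e_3+e_6)\rangle$; this gives the second $\Pa$ of the theorem.

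For stability, each candidate contains a stable $3$-dimensional subspace (directly in Case (ii) via Theorem \ref{dim_3}; in Case (i), by picking a rank-$2$ point of the fiber conic playing the role of $\omega_3$ in Theorem \ref{dim_3}), and the geometric criterion of Section 2, applied to $\Pa^{\perp}\subset\D{P}(\bigwedge^2 W)$, promotes the stability of the contained $3$-plane to that of $\Pa$. A direct coordinate calculation then gives the intersections with $\Gr(2,W^*)$: the smooth conic in Case (i), and the line $C$ plus a second disjoint line in Case (ii), the latter coming from rank-$2$ combinations of the generators of $\pi_g$ with the two generators of $C$. The main obstacle is Case (ii): excluding $\beta$-pencils and proving that the admissible $\alpha$-pencils form a single orbit under the stabilizer of $\pi_g$ requires a careful study of the scroll $Z$, an explicit description of that stabilizer, and use of the Pfaffian constraint $\Pa\subset\Pf$.
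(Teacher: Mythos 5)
Your proposal follows essentially the same route as the paper: reduce via Theorem \ref{dim_le3} to $\Pa=\langle \Pb,\omega_3,\omega_4\rangle$ with the extra generators on the scroll $Z$, split according to whether the relevant curve of $\Pa\cap\Gr(2,W^*)$ sits in a single fiber of $\pi\colon Z\to\Pb$ (giving the conic case) or maps onto a line of $\Pb$ (giving the pair-of-lines case, where the paper pins down $\omega_4$ as one of the two points of $\mathfrak{C}_{\omega_1}\cap\D{T}_{\omega_3}Q_{\omega_2}$ — this is exactly the ``Pfaffian constraint'' analysis you defer), and deduce stability from the stable $3$-dimensional subspaces classified in Theorem \ref{dim_3}. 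The only cosmetic difference is your $\alpha$/$\beta$-pencil language, which is not really needed since all lines of $\Gr(2,W^*)$ contained in $Z$ arise as rulings $\psi(\{p\}\times\overline{\omega_0\omega_1})$.
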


Finally, passing to the 5-dimensional case, we show that because of the inclusion \mbox{$\Pa \subset \Pf$}, every hyperplane in $\Pa$ can belong only to one among the two stable orbits of 4-planes; 
this leads to the existence of just one stable orbit: 
\begin{thm} Let $\Pa \subset 	\Pf$ be a stable linear space of dimension 5. Then $\Pa$ is $SL(W)$ equivalent to the space:
	$$ \DM{\langle}e_1\wedge e_4 + e_2\wedge e_5, \ e_1 \wedge e_6+ e_3\wedge e_5, \ e_2\wedge e_6 - e_3\wedge e_4, \ (e_1-e_5)\wedge (e_2+ e_4), (e_1-e_5)\wedge (e_3+e_6), (e_2+e_4)\wedge (e_3+e_6)\DM{\rangle}$$
	and $\Pa$ will meet the Grassmannian $\Gr(2,W^*)$ along a pair of disjoint planes.
	
	\end{thm}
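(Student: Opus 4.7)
The plan is to combine the classification of stable $4$-dimensional linear systems from Theorem \ref{dim_4} with the Pfaffian constraint $\Pa\subset\Pf$ to pin $\Pa$ down uniquely up to the $SL(W)$-action. By Theorem \ref{dim_le3}, we may assume $\pi_g\subset\Pa$, and by the discussion preceding Theorem \ref{dim_3} we may write $\Pa=\langle \pi_g,\omega_4,\omega_5,\omega_6\rangle$ with the three extra generators lying in a $2$-dimensional component $S$ of $\Pa\cap\Gr(2,W^*)$ contained in the scroll $Z\simeq S_{(2,2,2)}\to\pi_g$.

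We next consider the pencil of $4$-dimensional hyperplanes $H\subset\Pa$ containing $\pi_g$, parametrized by the hyperplanes in $\langle\omega_4,\omega_5,\omega_6\rangle$. A destabilizing subspace in $H^{\perp}$ for a generic such $H$ would also destabilize $\Pa$ via the geometric criterion of Section 2, so a general $H$ is itself stable; hence by Theorem \ref{dim_4} each such $H$ is $SL(W)$-equivalent either to the orbit for which $H\cap\Gr(2,W^*)$ is a smooth conic (the $\Pa_1$-type) or to the one for which $H\cap\Gr(2,W^*)$ is a pair of disjoint lines (the $\Pa_2$-type). The key step is to exclude the $\Pa_1$-type. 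In a $\Pa_1$-model the two extra rank-two generators $e_1\wedge e_2$ and $e_4\wedge e_5$ are confined to a single fiber of $\pi\colon Z\to\pi_g$ and the conic $H\cap\Gr(2,W^*)$ lies entirely inside that fiber, so imposing that every pencil joining an additional rank-two form $\omega_6\in S$ to the existing rank-two forms of $H$ remains in $\Pf$ forces, by a direct scroll computation, $\omega_6\in H$, contradicting $\dim\Pa=5$.

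Thus every general hyperplane of $\Pa$ containing $\pi_g$ is of $\Pa_2$-type, which means $\omega_4,\omega_5$ share a common factor $v\in W^*$ and span a $\beta$-line in $\Gr(2,W^*)$. The only linear $2$-dimensional subspace of $\Gr(2,W^*)$ containing this $\beta$-line is the $\alpha$-plane $\D{P}(\bigwedge^2 U_1)$ for the $3$-dimensional $U_1\subset W^*$ spanned by $v$ and the remaining factors of $\omega_4,\omega_5$; this pins down $S=\D{P}(\bigwedge^2 U_1)$ and forces $\omega_6$ into the explicit form of the statement. A short computation then shows that $\pi_g\subset\bigwedge^2 U_1\oplus\bigwedge^2 U_2$ for a uniquely determined complementary subspace $U_2\subset W^*$, so that $\Pa=\bigwedge^2 U_1\oplus\bigwedge^2 U_2$ and $\Pa\cap\Gr(2,W^*)=\D{P}(\bigwedge^2 U_1)\sqcup\D{P}(\bigwedge^2 U_2)$ is the disjoint union of two $\alpha$-planes. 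The inclusion $\Pa\subset\Pf$ then becomes automatic (every element splits as $\omega_1+\omega_2$ with $\omega_i\in\bigwedge^2 U_i$ of rank at most $2$ and $U_1\cap U_2=0$), and stability is verified from the symmetric decomposition via the criterion of Section 2. The main obstacle is the exclusion of the $\Pa_1$-case: it requires a careful joint analysis of the scroll $Z$, the position of the conic inside one of its fibers, and the rank condition on the $2$-parameter families of linear combinations built from $\omega_6$.
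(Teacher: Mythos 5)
Your final description of the answer is correct and even a bit sharper than the paper's statement: the stable $\D{P}^5$ is $\D{P}(\bigwedge^2 U_1\oplus\bigwedge^2 U_2)$ for a direct sum decomposition $W^*=U_1\oplus U_2$ into $3$-dimensional subspaces, which makes both the inclusion in $\Pf$ and the intersection with $\Gr(2,W^*)$ transparent. However, the route you take to it has a genuine logical gap at its pivotal step. You argue that ``a destabilizing subspace in $H^{\perp}$ for a generic hyperplane $H\supset\pi_g$ would also destabilize $\Pa$, so a general $H$ is stable.'' This implication runs the wrong way: a destabilizing pair $U\subset U'$ for $H$ only gives $\D{P}(U\wedge U')\subset H^{\perp}$, and since $H\subset\Pa$ one has $\Pa^{\perp}\subsetneq H^{\perp}$, so nothing forces $\D{P}(U\wedge U')\subset\Pa^{\perp}$. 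Corollary \ref{unst_sub} gives exactly the opposite inheritance (non-stability descends to subspaces; stability ascends). Since this claim is what licenses your appeal to Theorem \ref{dim_4} to classify the hyperplanes through $\pi_g$, the case analysis does not get off the ground as written. It is repairable — by Proposition \ref{stab_s1}, Remark \ref{2_then_3} and Lemma \ref{plane_isotropic} the candidate destabilizers are $3$-dimensional isotropic subspaces for $\pi_g$, of which there are only finitely many, so if all $H$ in the $2$-parameter family were non-stable one fixed $U$ would destabilize two hyperplanes spanning $\Pa$ and hence $\Pa$ itself — but you have not made that argument.

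Two further points need attention. First, your claim that the only $2$-dimensional linear subspace of $\Gr(2,W^*)$ containing the line $\overline{\omega_4\omega_5}$ is the $\alpha$-plane $\D{P}(\bigwedge^2 U_1)$ is false: a line in $\Gr(2,W^*)$, given by a flag $V_1\subset V_3\subset W^*$, lies on exactly one $\alpha$-plane but also on a $\D{P}^2$-family of $\beta$-planes (those determined by $V_1\subset V_4$ with $V_3\subset V_4$); moreover the component $S$ is not known a priori to be linear. These alternatives must be excluded, most naturally by using that $S$ sits inside the scroll $Z$, whose $2$-dimensional linear subspaces are precisely the planes $\psi(\{p\}\times\Pb)$. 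Second, your exclusion of the conic ($\Pa_1$) type is only asserted as ``a direct scroll computation.'' The paper avoids this case split entirely: since $\dim(\Pa\cap\Gr(2,W^*))=2$ exceeds the fiber dimension of $\pi:Z\to\Pb$, one can pick rank-two points in distinct fibers, and the condition $\langle\omega_2,\omega_3,\omega_4\rangle\subset\Pf$ (via the quadric $Q_{\omega_2}$ and the sextic $D_{\omega_2}$) pins $\omega_4$ to one of two points of $\mathfrak{C}_{\omega_1}$, producing the line $l_a\subset Z$ directly; the third generator is then forced onto $\mathfrak{C}_{\omega_2}$ at $\psi(p_a,\omega_2)$ or $\psi(p_b,\omega_2)$, and stability of the resulting $\Pa$ follows upward from the stability of the $4$-dimensional subspace $\langle\Pb,\omega_3,\omega_4\rangle$. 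I would encourage you to either adopt that direct construction or fill in the three missing arguments above.
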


\section{Preliminaries}
\subsection{Basics on the geometry of $\Gr(2,W^*)$ and $\Pf$}
		In this section we recollect a few generalities about the Grassmannian and the Pfaffian hypersurface. 
		 Let $W$ be a complex vector space of dimension 6 and consider $\bigwedge^2 W^*$, the 15-dimensional vector space of skew-symmetric bilinear forms on $W$.
		Each element $\omega \in \bigwedge^2 W^*$ determines a linear morphism from $W$ to $W^*$ representable by a $6 \times 6$ skew-symmetric matrix $M_{\omega}$.  We define the rank of $\omega$ as the rank of $M_{\omega}$. Note that, as $\omega$ is a skew-symmetric bilinear form, its rank is always even, so \mbox{$\forall \ \omega \in \bigwedge^2 W^*$}, \  $ rk(\omega)=2k$, \mbox{$ 0\le k\le 3 $.} 
		Consider now $\Pw\simeq \D{P}^{14}$, the projective space of lines in $\bigwedge^2 W^*$. 
		Since the rank and the kernel of any tensor in $\bigwedge^2 W^*$ are invariant under multiplication by a non-zero scalar, the notions of rank and kernel are well-defined also for elements in $\Pw$ (for $\omega \in \Pw$, $\ker(\omega)<W$ is thus defined as the kernel of any tensor in the line corresponding to $\omega$).  

		The locus of all points having rank less than or equal to 4 defines a hypersurface, named the Pfaffian hypersurface, $\Pf \subset \Pw$: 
		$$\Pf:=\{\omega \in \Pw \mid rk (\omega) \le 4\}.$$
		
		Because of the fact that  an arbitrary element $\omega$ in $\Pw$  is representable by a skew-symmetric matrix, $det \ \omega $ is a square: $det \ \omega=\Pf(\omega)^2$. 
		Thus the Pfaffian, being determined by the equation $\Pf(\omega)=0$, is a cubic hypersurface (as $\dim(W^*)=6$ ). 
		Consider now $\Gr(2,W^*)$, the Grassmannian of lines in $\D{P}(W^*)\simeq \D{P}^5$. By means of the Plucker's embedding, we can realize $\Gr(2,W^*)$ as a smooth subvariety of $\Pw$ of dimension 8 and degree 14 (more generally the Grassannian $\Gr(2, n+1)$ of lines in $\D{P}^n$, is a smooth subvariety of $\D{P}(\bigwedge^2 \D{C}^{n+1})$ of dimension $2(n-1)$ and degree $\frac{[2(n-1)]!}{(n-1)!n!}$.)
		As any $\omega \in \Pw$ having rank equal to 2 might be written as an $\textit{indecomposable}$ tensor of the form $\omega= v_1 \wedge v_2, \ v_1,v_2\in W^*$,  we see that $\Gr(2,W^*)$ coincides with the locus:
		$$\Gr(2,W^*)=\{\omega \in \Pw \mid rk(\omega)=2\}$$ 
		We can therefore identify $\Gr(2,W^*)$ with $\Sing(\Pf)$ (the set of singular points in $\Pf$).
		Accordingly, if we consider the projective tangent space $\D{T}_{\omega}\Pf$ to $\Pf$ at $\omega$, if ever \mbox{$\omega \in \Gr(2,W^*)$}, then $\D{T}_{\omega}\Pf=\Pw$; if $\omega \in \Pf\setminus \Gr(2,W^*)$ instead, $\D{T}_{\omega}\Pf$ coincides with the hyperplane defined by $\ker(\omega) \in \Gr(2,W)$ (as $\omega \in \Pf \setminus \Gr(2,W^*)$ defines a linear morphism $W\rightarrow W^*$ of rank 4, its kernel $\ker(\omega)$ determines a point in the dual Grassmannian $\Gr(2,W)$).
		
		These arguments show that the Gauss map $\gamma$ defined by $\Pf$:
		\begin{align*}
		\gamma :\D{P}(\bigwedge ^2 W^*)&\dashrightarrow \D{P}(\bigwedge ^2 W)\\
		                    \omega & \mapsto d_{\omega}\Pf
		\end{align*}
		is a rational map of degree two whose indeterminacy locus is $\Gr(2,W^*)=\Sing(\Pf)$ and mapping $\Pf\setminus \Gr(2,W^*)$ to $\Gr(2,W)$. 
		$\gamma$ is determined by a linear system of quadrics $\mathcal{Q}\subset S^2(\bigwedge^2 W), |\mathcal{Q}|\simeq \D{P}(\bigwedge^4 W)\simeq \D{P}(\bigwedge^2 W^*)$, referred to as \textit{Plucker's quadrics}, whose base locus is thus $\Gr(2,W^*)$.

	    If now we look at the projective tangent spaces to the Grassmannian, we have that for each point $\omega \in \Gr(2,W^*)$, $\D{T}_{\omega}\Gr(2,W^*)\simeq \D{P}^8$;
	    denoting by $L_{\omega}\subset W^*$ the 2-dimensional subspace of $W^*$ corresponding to $\omega$, $\D{T}_{\omega}\Gr(2,W^*)$ coincides with the locus of tensors in $\Pw$ whose restriction to $L_{\omega}^{\perp}$ is identically equal to zero (implying in particular that $\D{T}_{\omega}\Gr(2,W^*)\subset \Pf$).
	      This 8 plane is spanned by points $\omega' \in \Gr(2, W^*)$ corresponding to subspaces $L_{\omega}'$ of $W^*$ meeting $L_{\omega}$
	    (equivalently to lines in $\D{P}(W^*)$ meeting $\D{P}(L_{\omega})$) and it is cut out by hyperplanes lying in \mbox{$\D{P}(\bigwedge^2 L_{\omega}^{\perp})\subset \D{P}(\bigwedge^2 W)$},  $\ \D{P}(\bigwedge^2 L_{\omega}^{\perp})\simeq \D{P}^5$ (this latter is the linear span of the 4-dimensional quadric $\Gr(2, L_{\omega}^{\perp})\simeq \Gr(2,4)$).
	    Once we have described the tangent spaces to the Grassmannian, we notice that we can identify the fiber $\gamma^{-1}(\theta)$ of $\gamma$ over a point $\theta \in \Gr(2,W)$ with $\D{P}(\bigwedge^2 L_{\theta}^{\perp})$ ($L_\theta$ being the 2-dimensional subspace of $W$ corresponding to $\theta$), the five dimensional linear space of hyperplanes containing $\D{T}_{\theta}\Gr(2,W)$. 
		
		\subsection{Linear systems of skew-symmetric forms}\label{sect_hw}
Consider $V_{n+1}, \ W$, a pair of complex vector spaces of respective dimensions $n+1$ and $6$, and $\phi: V_{n+1}\rightarrow \bigwedge^2 W^*$ an injective linear map. Defining such a morphism \mbox{$\phi \in Hom_{\D{C}}(V_{n+1},\bigwedge^2 W^*)$} is clearly equivalent to defining a ($n+1$)-dimensional linear subspace $A:=\phi(V_{n+1})$ of $\bigwedge^2 W^*$ or, yet again, $6\times 6$ skew-symmetric matrix $M_{A}$ whose entries are elements of ${V_{n+1}}^*$. 
Choose indeed a basis $Y_0, \ldots Y_n$ of $V_{n+1}$ and a dual basis $X_0,\ldots X_n$ of $V_{n+1}^*$. Each tensor \mbox{$\omega_k \in A, \ \omega_k:=\phi(Y_k), \ k=0,\ldots n,$} defines a skew-symmetric matrix $M_k=(m_{ij}^k)$ of size 6; the matrix $M_{A}$ corresponding to $\phi$ is therefore the matrix whose $(ij)$-th entry is the linear form:
$${(M_{A})}_{ij}=\sum_{k=0}^n m_{ij}^k X_k.$$

\vspace{2mm}
 
We consider now the $n$-dimensional projective space $\D{P}(A)\subset \Pw$; this is the \mbox{$n$-dimensional} linear system of skew-symmetric forms generated by the tensors $\omega_0,\ldots, \omega_n$. 
The intersection of $\Pa$ with $\Pf$ locates tensors in $\Pa$ having rank equal to 4. For $\Pa$ general, this intersection defines a cubic hypersurfaces in $\Pa$ whose equation is \mbox{$\Pf(M_{A})=0$}. 
If ever $\Pf(M_{A})\equiv 0$ instead, $\Pa$ will be entirely contained in $\Pf$.
Tensors in $\Pa$ having rank 2 coincide with points belonging to $\Pa\cap \Gr(2,W^*)$.
The locus $\Pa\cap \Gr(2,W^*)$ is a closed subvariety of $\Pa$ defined by an intersection of quadrics. Specifically, it is the base locus of $|\mathcal{Q}\restriction_{\Pa}|$ (the linear system of quadrics in $\mathcal{Q}$, restricted to $\Pa$), so that: 
$$\Pa\cap \Gr(2,W^*)=\Pa\cap (\bigcap_{Q \in \mathcal{Q}} Q).$$

The equations defining $\Pa\cap \Gr(2,W^*)$ can be written down explicitly taking the $4\times 4$ minors of $M_{A}$.

	\paragraph{Linear systems of generic rank 4}
	
	\
	
	In this article we will be mainly interested in the study of linear systems of skew-symmetric forms whose generic element is a tensor of rank 4. From what we have just illustrated this happens if and only if the corresponding linear space $\Pa$ is entirely contained in $\Pf$, hence our task reduces to the study of linear subspaces of the Pfaffian hypersurface. Now, given a $n$-dimensional linear space $\Pa\subset \Pw$ and a point $\omega \in \Pa \cap \Pf$,
	the locus of points $\omega' \in \Pw$ such that $\overline{\omega'\omega}\subset \Pf$, is a complete intersection sextic $D_{\omega}$, defined by :
	\begin{equation}\label{sextic}
	D_{\omega}:= \D{T}_{\omega}\Pf \cap Q_{\omega} \cap \Pf
	\end{equation}
	where $Q_{\omega}$ is a quadric hypersurface, more precisely a quadric cone with vertex in $\omega$, belonging to $\mathcal{Q}$.
	If furthermore $\Pa\subset \Pf$, then any hyperplane of $\Pa$ not passing through $\omega$ must be contained in $D_{\omega}$.
	\subsection*{Notation}
	We will always assume that our base field is $\D{C}$. Throughout the article we will adopt the following notation:
	\begin{itemize}
	\item $W\simeq \D{C}^6$ is a complex vector space of dimension 6, $\langle e_1,\ldots e_6\rangle$ is a basis of $W^*$ and $\langle u_1,\ldots, u_6\rangle$ is a basis of $W$ dual to  $\langle e_1,\ldots e_6\rangle$;
	\item $V_{n+1}\simeq \D{C}^{n+1}$ is a complex vector space of dimension $n+1$;
	\item for $\omega \in \Gr(2,W^*)$, $L_{\omega}\subset W^*$ is the corresponding two-dimensional subspace of $W^*$.
	\item for $\omega \in \Pf$, $Q_{\omega}$ and $D_{\omega}$ are, respectively, the quadric cone and the sextic defined in equation \ref{sextic};
	
	\end{itemize}
	According to the characterizations of linear systems of skew-symmetric forms presented at the beginning of the section, in what follows we will identify a linear system with the corresponding subspace of $\Pw$ (so that in the upcoming sections we will simply talk about ``GIT stability of linear spaces of forms''). 
\section{GIT stability}
At the beginning of the previous section, 
	we observed that given a $n$-dimensional linear system \mbox{$\Pa\subset \Pw$}, the vector space $A\subset \bigwedge^2 W^*$ corresponds to an element in \mbox{$\Hom_{\D{C}}(V_{n+1}, \bigwedge^2 W^*)\simeq {V_{n+1}}^*\otimes \bigwedge^2 W^*$} (so that $\Pa$ defines a point in the projective space $\D{P}({V_{n+1}}^*\otimes \bigwedge^2 W^*)\simeq \D{P}^{15(n+1)-1}$).
          We have a natural action of the group \mbox{$GL(n+1,\D{C})\times GL(W)$} on the vector space $V_{n+1}^*\otimes \bigwedge ^2 W^*$: 
          given $A\subset \bigwedge^2 W^*, \ A\simeq \D{C}^{n+1}$, $GL(n+1, \D{C})$ simply acts as a change of coordinates on $A$, so that if $A=\langle \omega_0, \ldots \omega_n\rangle$ and $h=(h_{ij}), \ h\in GL(n+1, \D{C})$,
          then 
          $$h\cdot A=	\langle \omega_0', \ldots , \omega_n'\rangle, \ \omega_i'=\sum _{k=0}^n h_{ki}\omega_k$$ (equivalently if $\phi:V_{n+1} \to \bigwedge^2 W^*$ is the injective linear map corresponding to $A$, $h\cdot \phi$ is the morphism defined by $h\cdot \phi(v)=\phi(h\cdot v)$).
          Nevertheless we will be mainly interested in the $GL(W)$ action that is described as follows: for 
          $A=\langle \omega_0, \ldots \omega_n\rangle$ a ($n+1$)-dimensional subspace of $\bigwedge^2 W^*$ and $g\in GL(W)$, we have:
	$$g\cdot A=\langle g\cdot \omega_0, \ldots, g\cdot \omega_n\rangle, \hspace{5mm} $$
	(that is, for $\phi: V_{n+1}\to \bigwedge^2 W^*$ the corresponding linear map, $(g\cdot \phi)(v)=g\cdot (\phi(v))$). 
	Recall that given $\omega\in \bigwedge^2 W^*, g \in GL(W), \ g\cdot \omega$ is the \DM{skew-symmetric form} \DM{defined} by $g\cdot \omega(u,u')=\omega(g\cdot u, g\cdot `)$, for any pair $u,\ u'$ of vectors in $W$. Therefore, \DM{if $M_{\omega}, \ M_{g\cdot \omega}$ denote} the skew-symmetric matrices representing $\omega$ and $g\cdot \omega$ \DM{respectively }, we have \mbox{$M_{g\cdot \omega}=g M_{\omega}\  g^T $}. Similarly, given $A \subset \bigwedge^2 W^*, \ A \simeq \D{C}^{n+1}$, if $M_{A}, \ M_{g\cdot A}$ denote the skew-symmetric matrices of linear forms representing $A$ and $g\cdot A$ respectively, then \mbox{$M_{g\cdot A}=g M_{A}\  g^T $}.

	The action of $GL(W)$ on $V_{n+1}^*\otimes 	\bigwedge^2 W^*$ induces an action on its projective space defined by:
	$$ g\cdot \D{P}(A):=\D{P}(g\cdot A);$$
	moreover since for any non-zero scalar $t \in \D{C}^*$, $\D{P}(tg\cdot A)=\D{P}( g\cdot A)$, we see that $GL(W)$ acts on $\D{P}({V_{n+1}}^*\otimes \bigwedge^2 W^*)$ as $PGL(W)$. 
	Under these circumstances  it is thus possible to apply geometric invariant theory (GIT for short) to get a notion of (semi)stability for linear systems of skew-symmetric forms. 
	
	\subsection{The GIT criterion}
	In order to apply GIT it is first convenient to restrict to the action of $SL(W)$. We recall here briefly how GIT stability (with respect to the $SL(W)$ action) for points in $\D{P}({V_{n+1}}^*\otimes \bigwedge^2 W^*)$ is defined. (We refer to \cite{Dol}, \cite{GIT}, for a more detailed and exhaustive dissertation on Geometric Invariant Theory).
	\begin{defn}Let $\Pa\subset \Pw$ be a $n$-dimensional linear space. 
	\begin{itemize}
	\item $\Pa$ is semistable if and only if $0\notin SL(W)\cdot A$;
	\item $\Pa$ is stable if and only if $SL(W)\cdot A$ is closed and the stabilizer $\Stab(A)$ of $A$ is finite.
	\end{itemize} 
	\end{defn}  
	Nevertheless it is possible to detect the stability of points in $\Pvw$ just by studying their stability with respect to the action of certain subgroups of $SL(W)$, namely 1 parameter subgroups ($1$-PS for short).
	\begin{defn}A 1-PS subgroup $\lambda$ of $SL(W)$ is a morphism of algebraic groups:
	$$\lambda: \D{G}_m \rightarrow SL(W).$$
	\end{defn}
	
	\
	
	Hilbert and Mumford established indeed a criterion (see \cite{GIT}) stating the following:
	
	\begin{thm*}[\textbf{Hilbert\DM{-Mumford} criterion\DM.}] $\Pa\subset \Pw$ is (semi)stable if and only if it is $\lambda$-(semi)stable (i.e. (semi)stable with respect to the action of $\lambda(\D{G}_m)$) for any 1-parameter subgroup $\lambda$ of $SL(W)$.  
	\end{thm*}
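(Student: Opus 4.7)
The plan is to reduce the Hilbert--Mumford criterion to a numerical criterion in terms of weights of one-parameter subgroups, and then to invoke the fundamental lemma on closures of orbits of reductive groups. First, for a non-trivial 1-PS $\lambda:\D{G}_m\to SL(W)$, I would diagonalise the action on $V_{n+1}^*\otimes \bigwedge^2 W^*$: choosing a basis $e_1,\ldots,e_6$ of $W^*$ in which $\lambda(t)=\diag(t^{r_1},\ldots,t^{r_6})$ with $\sum r_i=0$, each $X_k\otimes e_i\wedge e_j$ becomes a weight vector of weight $r_i+r_j$. A lift $\tilde{A}\in V_{n+1}^*\otimes \bigwedge^2 W^*$ of $\Pa$ decomposes accordingly as $\tilde{A}=\sum_i \tilde{A}_i$, and both $\lambda$-semistability and $\lambda$-stability translate cleanly into sign conditions on the weights of this decomposition: semistability amounts to the presence of some weight $\le 0$ (equivalently, after also considering $\lambda^{-1}$, weights of both signs), and stability to the presence of some weight strictly negative.

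The forward implication is then essentially tautological. If $\Pa$ is $SL(W)$-(semi)stable, then $0\notin \overline{SL(W)\cdot \tilde{A}}$, and in the stable case the orbit is closed with finite stabiliser; both properties are preserved under restriction to any subgroup $\lambda(\D{G}_m)\subset SL(W)$, giving $\lambda$-(semi)stability for every 1-PS.

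For the reverse implication, one has to produce a destabilising 1-PS starting from the assumption $0\in \overline{SL(W)\cdot \tilde{A}}$. The key input is a classical result at the heart of Mumford's work: for any reductive algebraic group $G$ acting linearly on an affine variety $V$, if $0\in\overline{G\cdot v}$ then there exists a 1-PS $\lambda$ of $G$ with $\lim_{t\to 0}\lambda(t)\cdot v=0$. Granted this, the resulting $\lambda$ violates the weight criterion above, furnishing the desired $\lambda$-instability; the stable case follows analogously, by applying the lemma to a point in the unique closed orbit lying in the boundary of $SL(W)\cdot \tilde{A}$, and then using that finiteness of the $SL(W)$-stabiliser forces the numerical condition to be strict.

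The main obstacle is precisely this fundamental lemma. In \cite{GIT} it is proved via the valuative criterion of properness combined with a $KAK$-type structural decomposition that reduces every one-parameter degeneration in $G$ to one coming from a fixed maximal torus. Since no feature specific to the representation $V_{n+1}^*\otimes \bigwedge^2 W^*$ enters the argument, I would simply import the statement from \cite{GIT} rather than reprove it.
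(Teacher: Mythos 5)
This theorem is not proved in the paper at all: it is the classical Hilbert--Mumford criterion, stated with a citation to \cite{GIT}, and the paper's own contribution begins only afterwards, in Theorem \ref{GIT}, which specializes the criterion to the representation $V_{n+1}^*\otimes\bigwedge^2 W^*$. So your proposal can only be measured against the standard literature proof, and as an outline of that proof it is essentially correct: diagonalize each 1-PS, translate $\lambda$-(semi)stability into sign conditions on the weights of the components of a lift $\tilde{A}$, observe that the forward implication is formal, and obtain the converse from the existence of a destabilizing 1-PS, which is the genuinely hard ingredient and is legitimately imported from the same source the paper cites.

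Two steps are glossed over in a way that would not survive being written out. First, in the forward direction for stability, ``closed orbit with finite stabilizer'' is not literally preserved under restriction to a subgroup: closedness of $SL(W)\cdot \tilde{A}$ does not formally imply closedness of $\lambda(\D{G}_m)\cdot \tilde{A}$. One must either invoke the fact that orbits of reductive subgroups of a point with closed $G$-orbit are closed, or argue directly with weights: if all $\lambda$-weights of $\tilde{A}$ were of one sign (weakly), the limit at $0$ or at $\infty$ would exist and be $\lambda$-fixed, forcing either a non-closed $SL(W)$-orbit or an infinite stabilizer. Second, in the reverse direction the fundamental lemma as you state it (if $0\in\overline{G\cdot v}$ then some 1-PS sends $v$ to $0$) yields only the semistable case. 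For the stable case you need the refined version (Kempf; \cite{GIT}, Ch.~2): for any closed orbit $G\cdot y$ contained in $\overline{G\cdot v}$ there is a 1-PS $\lambda$ with $\lim_{t\to 0}\lambda(t)\cdot v\in G\cdot y$; and you must separately treat the case where $SL(W)\cdot\tilde{A}$ is closed but the stabilizer is infinite, in which the non-stabilizing 1-PS is found inside the stabilizer itself (a positive-dimensional reductive stabilizer contains a nontrivial torus, giving a $\lambda$ along which $\tilde{A}$ has all weights zero). Since all of this is contained in \cite{GIT}, deferring is acceptable and consistent with what the paper does, but the reduction as you wrote it is not yet complete.
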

	\
	
	Therefore given $\Pa \simeq \D{P}^n, \ \Pa \subset \Pw$, 
we have that $\Pa$ fails to be \DM{semistable} if and only if there exists \DM{a 1-PS $\lambda$} of $SL(W)$ such that $0=\lim_{t\to \infty} \lambda(t)\cdot A$ (so that  $0 \in \overline{\lambda(\D{G}_m)\cdot A}$).  $\Pa$ fails to be stable if and only if there exists \DM{a 1-PS $\lambda$} of $SL(W)$ such that $\lambda(t) \cdot A$ is bounded as $t\to \infty$ (as if this is the case  $\overline{(\lambda(\D{G}_m)\cdot A}\DM)\setminus  (\lambda(\D{G}_m)\cdot A)\ne \emptyset$). Take now a $(n+1)$-tuple of tensors $\omega_0, \DM{\ldots,} \omega_n$ spanning $A$. Since for $t\in \D{C}^*, \ \lambda(t) \cdot A$ is generated by $\lambda(t)\cdot \omega_i, i=0, \dots, n$, we see that $\Pa$ fails to be $\lambda$-semistable if and only if $0=\lim_{t\to \infty} \lambda(t) \cdot \omega_i, \ \forall\: i=0, \ldots, n$ that is, if and only if each generator is not $\lambda$-semistable. Similarly $\Pa$ is not $\lambda$-stable if and only if each $\lambda(t)\cdot\omega_i, \ \forall \: i=0, \ldots, n$ is bounded for $t\to \infty$ that is, if and only if each generator is not $\lambda$-stable.    
	The analysis of the stability for 1-PS reveals now to be much more easier since the action of these groups is always diagonalizable. This means that given 	
	a 1-PS $\lambda$ of $SL(W), \ \lambda$ 
	is uniquely determined by a decreasing six-tuple of complex numbers, $\lambda_1\ge \DM{\ldots,} \ge \lambda_6$, not all equal to zero, with $\sum_{i=1}^6 \lambda_i=0$ and such that $\lambda(t)=diag(t^{\lambda_1}, \dots, t^{\lambda_6})$ \DM{ (for} this reason we will often denote such a $\lambda$ with $(\lambda_1, \DM{\ldots,} \lambda_6)$, the 6-tuple of weights of its action).
	\DM{Adapting} the argument used by Wall in \DM{\cite{Wall}},we \DM{prove} the following:
	
	\begin{thm}\label{GIT}
	Let $\Pa\subset \Pw$ be a $n$-dimensional linear space and let $\omega_0,\ldots, \omega_n$ be a $(n+1)$-tuple of generators.
 \DM{We denote} by $M_{k}$ \DM{the skew-symmetric matrices} representing the \DM{forms} $\omega_k$\DM{, ${M_{k}}=(m_{ij}^k)$, \mbox{$\ 1\le i<j \leq 6, \ k=0, \dots, n$.} } 
		
		\begin{enumerate}
			\item  $\Pa$ is not \DM{semistable} if and only if, for some choice of coordinates on $W$, there \DM{exists an integer $1\le s \le 3$} such that  $m_{ij}^k=0\ \text{\DM{whenever}} \ 1\le i \le s, \ i<j\le 7-s$, \mbox{$0 \le k\le n$.}
			\item $\Pa$ is not stable if and only if, for some choice of coordinates on $W$, there \DM{exists an integer $1\le s \le 3$}
			such that, \mbox{$m_{ij}^k=0$} \DM{whenever} $1\le i \le s, \ i<j\le \DM6-s,$ \mbox{$0\le k\le n$}.
		\end{enumerate}
	\end{thm}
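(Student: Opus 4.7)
\emph{Proof plan.} The plan is to apply the Hilbert--Mumford numerical criterion recalled just above, exploiting the fact that every 1-PS of $SL(W)$ is diagonalisable. After choosing coordinates on $W$ in which $\lambda(t)=\textup{diag}(t^{\lambda_1},\ldots,t^{\lambda_6})$ with $\lambda_1\ge\cdots\ge\lambda_6$ and $\sum_i\lambda_i=0$, the identity $M_{g\cdot\omega}=g M_\omega g^T$ shows that each entry $m_{ij}^k$ transforms as $m_{ij}^k\mapsto t^{\lambda_i+\lambda_j}m_{ij}^k$. Consequently $\lim_{t\to\infty}\lambda(t)\cdot A=0$ is equivalent to $m_{ij}^k=0$ for every pair $(i,j)$ with $\lambda_i+\lambda_j\ge 0$, while the boundedness of $\lambda(t)\cdot A$ as $t\to\infty$ is equivalent to $m_{ij}^k=0$ for every pair with $\lambda_i+\lambda_j>0$. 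The task thus reduces to matching these weight-defined pair sets with the explicit ``block'' sets $F_s:=\{(i,j):1\le i\le s,\ i<j\le 7-s\}$ for (1) and $F_s^{\textup{st}}:=\{(i,j):1\le i\le s,\ i<j\le 6-s\}$ for (2).

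For the two sufficiency directions I would exhibit, for each $s\in\{1,2,3\}$, an explicit 1-PS whose weight locus $\{(i,j):\lambda_i+\lambda_j\ge 0\}$ (respectively $\{(i,j):\lambda_i+\lambda_j>0\}$) is exactly $F_s$ (resp.\ $F_s^{\textup{st}}$). Natural choices are $(5,-1,-1,-1,-1,-1)$, $(4,4,-1,-1,-1,-5)$ and $(1,1,1,1,-2,-2)$ for (1), and $(1,0,0,0,0,-1)$, $(1,1,0,0,-1,-1)$ and $(1,1,1,-1,-1,-1)$ for (2); the verification is a direct inspection of the fifteen weights $\lambda_i+\lambda_j$ in each case.

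For the necessity direction of (1), the Hilbert--Mumford criterion supplies a non-trivial 1-PS that destabilises $\Pa$, and after diagonalising all $m_{ij}^k$ vanish on $\{(i,j):\lambda_i+\lambda_j\ge 0\}$. The key input is the elementary identity
\[
(\lambda_1+\lambda_6)+(\lambda_2+\lambda_5)+(\lambda_3+\lambda_4)\;=\;\textstyle\sum_{i=1}^6\lambda_i\;=\;0,
\]
which forces at least one of the three opposite-pair sums $\lambda_s+\lambda_{7-s}$, $s\in\{1,2,3\}$, to be non-negative. For that $s$, monotonicity of the sequence $(\lambda_i)$ gives $\lambda_i+\lambda_j\ge\lambda_s+\lambda_{7-s}\ge 0$ for every $(i,j)\in F_s$, hence $F_s\subseteq\{(i,j):\lambda_i+\lambda_j\ge 0\}$ and the required block vanishing follows.

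The necessity direction of (2) follows the same scheme but is slightly more delicate, since the extremal pairs of $F_s^{\textup{st}}$ are $(1,5),(2,4),(2,3)$, which are not naturally ``opposite''; I expect this asymmetry to be the main obstacle. The analogous identity here is $(\lambda_1+\lambda_5)+(\lambda_2+\lambda_4)+(\lambda_2+\lambda_3)=\lambda_2-\lambda_6\ge 0$, with equality only when $\lambda_2=\cdots=\lambda_6$; substituting into $\sum_i\lambda_i=0$ and using $\lambda_1\ge\lambda_2$ then forces the 1-PS to be trivial. Hence for any non-trivial 1-PS at least one of the three extremal sums is strictly positive, and the same monotonicity argument as in (1) delivers the block vanishing on $F_s^{\textup{st}}$.
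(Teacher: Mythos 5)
Your proposal is correct and follows essentially the same route as the paper's proof: diagonalise the 1-PS, reduce the (semi)stability of $\Pa$ to the signs of the weights $\lambda_i+\lambda_j$ on the entries $m_{ij}^k$, use the identity $(\lambda_1+\lambda_6)+(\lambda_2+\lambda_5)+(\lambda_3+\lambda_4)=0$ (resp.\ a sum of extremal pair-weights) plus monotonicity of the $\lambda_i$ to extract the block $F_s$, and exhibit explicit destabilising 1-PS for the converse directions (your $(1,1,1,1,-2,-2)$ for $s=3$ in part (1) works just as well as the paper's $(3,3,3,-1,-4,-4)$). The one step to flesh out is the triviality argument in part (2): from $\lambda_2=\cdots=\lambda_6$, $\sum_i\lambda_i=0$ and $\lambda_1\ge\lambda_2$ alone one cannot conclude (e.g.\ $(5,-1,-1,-1,-1,-1)$ satisfies all three), so you must also invoke that each of $\lambda_1+\lambda_5$, $\lambda_2+\lambda_4$, $\lambda_2+\lambda_3$ is individually $\le 0$ while their total is $\ge 0$, forcing each to vanish and hence all weights to be zero.
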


	\begin{proof}
	In the first place we recall that given $\Pa\subset \Pw$ as in the statement of the theorem, and 
	$\lambda=(\lambda_1,\DM{\ldots,} \lambda_6)$ a 1-PS of $SL(W)$, $\lambda(t)\cdot A$ is then generated by $\lambda(t) \cdot \omega_k$, tensors represented by the matrices ${(\lambda(t)M_k \lambda(t)^T)}_{ij}=t^{\lambda_i+\lambda_j} m_{ij}^k$, \mbox{$ k=0, \dots, n$.}
		
		\begin{enumerate}
			
			\item
			Suppose that $\Pa$ is \DM{unstable, that is, non-semistable}, and let $\lambda=(\lambda_1,\dots, \lambda_6)$ be a 1-PS for which we have $0 \in \overline{\lambda(\D{G}_m) \cdot A}$.  We thus have $0\in \overline{\lambda(\D{G}_m)\cdot \omega_k}$, \mbox{$ \forall\: k=0,\DM{\ldots,} n$} or, in other words, none among the generators is $\lambda$-semistable. There exists then an integer $s\in \{1,2,3\}$ such that $\lambda_i+ \lambda_{7-i}\ge 0$. Indeed, if this \DM{was not} the case, we would have $\sum_{i=1}^ 6 \lambda_i< 0$, \DM{which is} absurd. For such an $s$ we therefore have $\lambda_s+\lambda_{7-s}\ge 0$ and so, from the assumptions on the \DM{$\lambda_i$}s, $\lambda_i + \lambda_j\ge 0$, \mbox{ $\forall\: 1\le i\le s, \ \forall\: i<j\le 7-s$}. As we are assuming that $\Pa$ is not $\lambda$\DM{-semistable,} we conclude that $m_{ij}^k=0$ whenever $1\le i \le s, i<j\le 7-s$.
			\DM{Conversely,} if the coordinates $m_{ij}^k$ of $A$ \DM{satisfy} the hypotheses of the theorem\DM{,} we are able to construct explicitly a 1-PS of $SL(W)$ \DM{refuting} the semistability of $\Pa$. 
			This is the case for $\lambda \in \DM{\Hom_{\mathrm{Gr-Alg}}}(\D{G}_m, SL(W))$ acting with weights $\lambda_1, \dots, \lambda_6$ defined as follows: 
			$$\lambda_i=\begin{cases}6-s & 1 \le i \le s\\
			-1 & s+1\le i\le 7-s
			\\ s-7 & 8-s\le i\le 6\end{cases}$$
			
			\item
			Suppose now that $\Pa$ is \DM{nonstable}. This means that there exists a 1-PS $\lambda$ of $SL(W)$, \mbox{$\lambda=(\lambda_1,\DM{\ldots,} \lambda_6)$} for which $\forall\: k=0,\dots, n, \ \lambda(t)\cdot  \omega_k$ is bounded as $t\to \infty$. 
			Now, if ever for all \DM{integers} $s\in \{1,2,3\}$ we had $\lambda_s+\lambda_{6-s}\le 0$\DM, we would get $2\sum_{i=1}^5 \lambda_i \le 0$ hence $\sum_{i=1}^5 \lambda_i \le 0$. As $\sum_{i=1}^6 \lambda_i =0$ we should then have $\lambda_6 \ge 0$. Thus for every $1\le i\le 5$, \mbox{$\lambda_i\ge \lambda_6\ge 0$}, condition that can be satisfied if and only if all the \DM{$\lambda_i$'s} are equal to zero, a contradiction. \DM{Then there exists an} $s, \ 1\le s\le 3$, for which $\lambda_s+\lambda_{6-s}>0$, and consequently for all $i,j$ with $i\le s, \ i<j\le 6-s$, $\lambda_{i}+\lambda_{j}>0$. 
			As \DM{the non-stability} requires \DM{the} boundedness of $\lambda(t)\cdot \omega_k$ as $t\to \infty $ for every $k=0,\DM{\ldots,} n$, we must have $m_{ij}^k=0$ whenever $0\le k \le n, \ i\le s,\ i<j\le 6-s$.
			
			For the converse implication, consider $A\subset \bigwedge^2 W^*$ whose affine coordinates $m_{ij}^k$ in $V_{n+1}^*\otimes \bigwedge^2 W^*$ \DM{satisfy} the hypotheses of the proposition. Again, we are able to provide a \DM{1-PS} $\lambda$ for which $\Pa$ is not $\lambda$-stable. We can consider for example a \DM{1-PS} acting with the following weights:
			$$\lambda_i=\begin{cases}1 & 1 \le i \le s\\
			0 & s+1\le i\le 6-s \\ -1 & 6-s+1\le i\le 6\end{cases}$$
		\end{enumerate}
	\end{proof}
	
	\
	
	Applying theorem \ref{GIT} we see then that given a nonstable $\Pa\subset\Pw$, the $6\times 6$ skew-symmetric matrix of linear forms $M_{A}$ defined by $A$ 
	 is, for an appropriate choice of coordinates, of one of the following forms:
	\begin{equation*}{
		\begin{pmatrix}
		0 & 0 & 0 & 0 & 0 & * \\
		0 & 0 & * & * & * & * \\
		0 & * & 0 & * & * & * \\
		0 & * & * & 0& * & *\\
		0 & * & * & * & 0& *\\
		* & * & * & * & * & 0 
		\end{pmatrix}\hspace{2mm}(s=1) \hspace{2cm}
		\begin{pmatrix}
		0 & 0 & 0 & 0 & *& * \\
		0 & 0 & 0 & 0 & * & * \\
		0 & 0& 0 & * & * & * \\
		0 & 0 & * & 0& * & *\\
		* & * & * & * & 0& *\\
		* & * & * & * & * & 0 
		\end{pmatrix} \hspace{2mm} (s=2) }
	\end{equation*}
	\\
	\begin{equation*}{
		\begin{pmatrix}
		0 & 0 & 0 & *& * & * \\
		0 & 0 & 0 & * & * & * \\
		0 & 0 & 0 & * & * & * \\
		* & * & * & 0& * & *\\
		* & * & * & * & 0& *\\
		* & * & * & * & * & 0 
		\end{pmatrix}\hspace{2mm}(s=3)}
	\end{equation*}
	
	\vspace{1 cm}
	
	\DM{Similarly, for a} linear space $\Pa$ that is not \DM{semistable}, the linear space $A$ might be represented as a \DM{skew-symmetric matrix} of linear forms $M_{A}$ of one of these types:
	\begin{equation*}{
		\begin{pmatrix}
		0 & 0 & 0 & 0 & 0 & 0\\
		0 & 0 & * & * & * & * \\
		0 & * & 0 & * & * & * \\
		0 & * & * & 0& * & *\\
		0 & * & * & * & 0& *\\
		0 & * & * & * & * & 0 
		\end{pmatrix}\hspace{2mm}(s=1) \hspace{2cm}
		\begin{pmatrix}
		0 & 0 & 0 & 0 & 0& * \\
		0 & 0 & 0 & 0 & 0 & * \\
		0 & 0& 0 & * & * & * \\
		0 & 0 & * & 0& * & *\\
		0& 0 & * & * & 0& *\\
		* & * & * & * & * & 0 
		\end{pmatrix} \hspace{2mm} (s=2) }
	\end{equation*}
	\\
	\begin{equation*}{
		\begin{pmatrix}
		0 & 0 & 0 & 0& * & * \\
		0 & 0 & 0 & 0 & * & * \\
		0 & 0 & 0 & 0 & * & * \\
		0 & 0 & 0 & 0& * & *\\
		* & * & * & * & 0& *\\
		* & * & * & * & * & 0 
		\end{pmatrix}\hspace{2mm}(s=3)}
	\end{equation*}
	
	\
	
	\subsection{The stability criterion: a geometric formulation}\label{stab_geom}

In this section we show how it is possible to rephrase the stability in a "more geometric'' form (this characterization of stability will be the one we will mainly make use of).
 
To start with we observe that given any $s$-dimensional linear subspace $U\subset W$ and any form $\omega \in \bigwedge^2 W^*$, the restriction $\omega\restriction_U$ of $\omega$ to $U$:
\begin{align*}
\omega\restriction_U:U&\rightarrow W^*\\
u&\mapsto \omega(u, \cdot )
\end{align*}
has rank $s$ (so that $\dim \im(\omega\restriction_U)^{\perp}=6-s$) for $\omega$ general and rank strictly less than $s$ (so that $\dim \im(\omega\restriction_U)^{\perp}\ge 7-s$) whenever $\ker(\omega)\cap U \ne \{0\}$. 
The inclusion \mbox{$U\subset \im(\omega\restriction_U)^{\perp}$} holds if and only if $U$ is isotropic with respect to $\omega$, namely if and only if $\forall\ u, u' \in U$, we have $\omega(u,u')=0.$ 
Applying theorem \ref{GIT}, we get that a $n$-dimensional linear space $\Pa \subset \Pw$ is not stable whenever there exists a linear subspace $U$ of $W$ of dimension $s, \ 1\le s\le 3$, isotropic with respect to any tensor in $A$ and such that for any pair $\omega, \ \omega'$ of generic points in $A$ we have  ${\im(\omega\restriction_{U})}^{\perp}={\im (\omega'\restriction_{U})}^{\perp}$. If $\Pa$ is not even semistable, then 
\mbox{$\ker(\omega)\cap U\ne \{0\}, \ \forall \: \omega \in A$.}

From these observations we immediately deduce the following:
\begin{cor}\label{pfaff_ss} Let $\Pa \subset \Pw$ be an unstable $n$-dimensional linear space. Then $\Pa\subset \Pf$.
\end{cor}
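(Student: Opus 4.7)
The plan is to invoke Theorem \ref{GIT}(1) to translate the unstability of $\Pa$ into a linear-algebraic condition that forces every tensor in $A$ to have non-trivial kernel; since an alternating form on a six-dimensional space with non-trivial kernel automatically has rank at most $4$, this will immediately yield $\Pa \subset \Pf$.

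First, I would apply Theorem \ref{GIT}(1) to obtain, after a suitable change of basis on $W$, an integer $s \in \{1,2,3\}$ so that for some choice of generators $\omega_0, \ldots, \omega_n$ of $A$, the matrix entries satisfy $m_{ij}^k = 0$ whenever $1 \le i \le s$ and $i < j \le 7 - s$. Set $U := \langle u_1, \ldots, u_s \rangle \subset W$. For an arbitrary $\omega \in A$, the vanishing condition together with the antisymmetry of $\omega$ kills the pairings $\omega(u_i, u_j)$ for all $1 \le i \le s$ and all $1 \le j \le 7 - s$. Consequently, the restriction $\omega\restriction_U : U \to W^*$ takes values in the span of $e_{8-s}, \ldots, e_6$, a subspace of dimension $s - 1$ (which is $0$ when $s = 1$).

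Since $\dim U = s > s - 1$, the kernel of $\omega\restriction_U$, which coincides with $\ker(\omega) \cap U$, is non-zero. In particular $\ker(\omega) \neq 0$, so $\rk(\omega) \le 5$; by the parity of the rank of an alternating form, $\rk(\omega) \le 4$, i.e.\ $\omega \in \Pf$. As this holds for every $\omega \in A$, we conclude $\Pa \subset \Pf$.

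I do not expect any real obstacle: the corollary is essentially a direct repackaging of the unstability criterion, and the only point needing a small case distinction is the verification that $\omega\restriction_U$ has image of dimension strictly less than $s$ for each of the three admissible values of $s$.
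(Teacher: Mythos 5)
Your proof is correct and follows essentially the same route as the paper: the paper's proof simply cites the geometric reformulation of Theorem \ref{GIT}(1) from Section \ref{stab_geom}, which already records that unstability yields a subspace $U$ of dimension $s$ with $\ker(\omega)\cap U\neq\{0\}$ for every $\omega\in A$, and then concludes $\rk(\omega)\le 4$ by parity exactly as you do. Your contribution is merely to spell out the small rank computation (image of $\omega\restriction_U$ landing in an $(s-1)$-dimensional space) that the paper leaves implicit, and that computation is accurate.
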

\begin{proof}
The instability of $\Pa$ implies the existence of $U\subset W$, a linear space of dimension $s$, $1\le s\le 3$ such that 
$\ker(\omega)\cap U\ne \{0\}, \ \forall\: \omega \in A$. This condition clearly implies that $\rk(\omega)\le 4, \ \forall\:\omega\in A$ hence that $\Pa\subset \Pf$.
\end{proof}

\paragraph{Destabilizing subspaces of $\Pa^{\perp}$.}

\

Let's consider a non-stable $n$-dimensional linear space $\Pa$ and let $U$ be a $s$-dimensional subspace of $W$ preventing its stability. From what we have just explained, there exists then 
a linear space $U'\subset W$ of dimension at least $6-s$ containing $U$ and contained in any hyperplane $\omega(u, \cdot) \in W^*$ with $\omega \in A, \ u\in U$. 
For such a pair of linear spaces $U, \ U'$ we thus get that $\omega(U,U')=0, \ \forall \: \omega \in A$ and hence $\Pa\subset \D{P}(U\wedge U')^{\perp}$ (or equivalently $\D{P}(U\wedge U')\subset \Pa^{\perp}$).
Analyzing each possible value of $s$ we describe here in greater details, how unstable and non-stable linear systems might be characterized.

\begin{itemize}

\item \textbf{s=1.} 
For $s=1$, if $\Pa$ is nonstable there exists a vector $u\in W$ such that all hyperplanes in $W$ defined by the equations $\omega(u,\cdot)=0, \ \omega\in A$, intersect along a linear space $U'$ of dimension   $\ge 5$. $\Pa$ fails to be semistable when $\dim U'=6$, namely when $u \in \ker(\omega)$, $\forall \omega \in A$ so that $U'=W$.
 $\D{P}(u\wedge U')$ is a linear subspace of $\Gr(2,U')\subset \Gr(2,W)$ having dimension bigger then or equal to 3 (more precisely of dimension 3 if $\Pa$ is strictly semistable and of dimension 4 if $\Pa$ is unstable); its orthogonal is a linear subspace of $\Pw$ containing $\Pa$ that can be generated as follows:
$$\D{P}( u\wedge U')^{\perp}= \langle \D{P}(\bigwedge^2 u^{\perp}), \ \D{P}(\bigwedge^2 {U''}^{\perp})\rangle$$
where $U''$ is any hyperplane in $U'$ not containing $u$.
We observe that if $\Pa$ is unstable,  $U'=W$, $\bigwedge ^2 {U''}^{\perp}= 0$, hence $\D{P}(u\wedge U')^{\perp}=\D{P}(\bigwedge^2 u^{\perp})$ is a 9 dimensional linear space isomorphic to $\D{P}(\bigwedge^2 \D{C}^5)$. If $\Pa$ is strictly semistable ${\D{P}(u\wedge U')}^{\perp}$ is the ten dimensional linear space spanned by $\D{P}(\bigwedge ^2 u^{\perp})$ and by the point \mbox{$\D{P}(\bigwedge^2 U''^{\perp})\in \Gr(2,W^*)$}.

\item \textbf{s=2.} For $s=2$ there exist a 2-dimensional linear space $U$ and a linear space $U'$ 
of dimension $\ge 4$ ($U'$ will have dimension strictly bigger then 4 whenever $\Pa$ is unstable) such that $\omega(U,U')=0,\ \forall\: \omega \in A$. 
In this case we have that any \mbox{$U''\simeq\D{C}^3$} contained in $U'$ and containing $U$ is isotropic with respect to any form in $A$ and therefore satisfies $\D{P}^2\simeq\Gr(2,U'')\simeq\D{P}(\bigwedge^2 U'')\subset {\Pa}^{\perp}$. 
Planes $\D{P}(\bigwedge^2 U'')$ of this kind span $\D{P}(U\wedge U')$, a linear space of dimension $\ge 4$ (its dimension is $\ge 6$ whenever $\Pa$ is unstable) satisfying $\Pa\subset \D{P}(U\wedge U'')^{\perp}$.

\item \textbf{s=3.} For $s=3$, $\D{P}(A)$ is not stable if there exists a linear space 
$U$ of dimension at least 3 that is completely isotropic with respect to any form in $A$. $\Pa$ is strictly semistable if $\dim(U)=3$; for such an $U$, $\D{P}(\bigwedge^2 U)\simeq \Gr(2,U)\simeq \D{P}^2$ is a plane satisfying $\Pa \subset \D{P}(\bigwedge^2 U)^{\perp}\simeq \D{P}^{11}$.
$\Pa$ is unstable whenever 
$\dim(U)=4$; 
 in this case $\Pa$ is contained in the 8-plane ${\D{P}(\bigwedge^2 U)}^{\perp}$, a linear space 
 coinciding with $\D{T}_{\omega_U}\Gr(2,W^*)$, where $\omega_{U}\in \Gr(2,W^*)$ is the point defined by ${U}^{\perp}$. 
\end{itemize}

\begin{rmk}\label{2_then_3} As remarked while treating the case $s=2$, we see that if $\Pa$ is ``destabilized'' by a couple of spaces $U<U'$ of dimension 2 and 4, then it is ``destabilized" by any hyperplane $U''$ of $U'$ containing $U$.
		Indeed given any tensor $\omega\in {U\wedge U'}^{\perp}$ and any vector $u \in U', \ u\not\in U$, since  $\omega(u,u)=0$, the 3-dimensional space $U'':=\DM{\langle}U,u\DM{\rangle}$ always satisfies $\omega(U'',U'')=0$ so that $\D{P}(\bigwedge ^2 U'')\subset \Pa^{\perp}$. 
		Otherwise we can argue saying that the existence of the aforementioned couple $U, U'$ implies that $\Pa^{\perp}\cap \DM{\Gr}(2,U')$ defines an hyperplane section $H\cap \DM{\Gr}(2,U')$ of $\DM{\Gr}(2,U')\simeq \Gr(2,4)$ by an hyperplane $H\in \DM{\Gr}(2,U^*)$.
		Such an hyperplane section is a 4\DM{-dimensional} quadric of rank 4 that always contains a plane isomorphic to $\DM{\Gr}(2,3)$.
	\end{rmk}

	\

Another useful corollary that we deduce straightaway from theorem \ref{GIT} is the following:
\begin{cor}\label{unst_sub} If $\Pa\subset \Pw$ is not (semi)stable then every linear subspace of $\Pa$ is not (semi)stable. 
\end{cor}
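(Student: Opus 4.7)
The plan is to derive this directly from Theorem \ref{GIT}, exploiting the fact that the criterion there is a \emph{linear} condition on the matrix entries of a spanning set, hence is inherited by any subspace. Specifically, suppose $\Pa$ fails to be semistable (respectively stable); by Theorem \ref{GIT} there exist a basis of $W$, an integer $1\le s\le 3$, and a system of generators $\omega_0,\ldots,\omega_n$ of $A$ such that the coefficients $m_{ij}^k$ of the skew-symmetric matrices $M_k$ representing $\omega_k$ satisfy $m_{ij}^k=0$ for all $1\le i\le s$, $i<j\le 7-s$ (respectively $i<j\le 6-s$), for every $k=0,\ldots,n$.

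Now let $\Pb\subset\Pa$ be any linear subspace, and pick generators $\omega'_0,\ldots,\omega'_m$ of $B$. Each $\omega'_l$ is a linear combination $\omega'_l=\sum_{k=0}^n c_{lk}\omega_k$, so the entries of the corresponding matrix $M'_l=\sum_k c_{lk} M_k$ are ${m'}_{ij}^l=\sum_k c_{lk}m_{ij}^k$. Since each $m_{ij}^k$ vanishes in the prescribed range of indices, the same holds for every ${m'}_{ij}^l$. Applying Theorem \ref{GIT} in the reverse direction to $\Pb$, with the same basis of $W$ and the same integer $s$, we conclude that $\Pb$ is not semistable (respectively not stable).

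Equivalently, one can phrase this through the geometric reformulation of Section \ref{stab_geom}: the non-(semi)stability of $\Pa$ is witnessed by an isotropic subspace $U\subset W$ of some dimension $s\in\{1,2,3\}$ such that $\omega(U,U')=0$ for all $\omega\in A$ and a suitable $U'\subset W$ of the required dimension; since $B\subset A$, the very same pair $(U,U')$ witnesses the non-(semi)stability of $\Pb$ via Theorem \ref{GIT}. There is no real obstacle here: the content of the corollary is simply that the destabilizing data for $\Pa$ automatically destabilize every subspace, and both the matrix formulation and the geometric formulation make this transparent.
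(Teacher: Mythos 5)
Your proof is correct and is exactly the deduction the paper intends: the paper gives no written proof, merely noting that the corollary follows ``straightaway from theorem \ref{GIT}'', and your observation that the vanishing conditions of that theorem are linear in the matrix entries and hence inherited by any subspace (with the same coordinates and the same $s$) is precisely that deduction. Both your matrix-entry argument and your geometric rephrasing via a common destabilizing pair $(U,U')$ are valid and consistent with the paper's framework.
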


\section{Destabilizing subspaces of dimension  $\le 2$}
Given a stable linear space $\Pa \subset \Pf$, we study now how the stability assumption imposes restrictions on the dimension of $\Pa$ and on the behavior of linear subspaces of $\Pa$. 
We prove the following theorem:
\begin{thm}\label{dim_le3}Let $\Pa\subset \Pf$ be a $n$-dimensional stable linear space. Then the following hold:
	\begin{itemize}
		\item $n\ge 3$;
		\item a general plane $\Pb \subset \Pa$ is such that $\Pb \cap \Gr(2,W^*)=\emptyset$;
		\item any plane $\Pb \subset \Pa$ such that $\Pb \cap \Gr(2,W^*)=\emptyset$ is $SL(W)$-equivalent to the plane $\pi_g$:
		$$\pi_g=\langle e_1\wedge e_4 + e_2\wedge e_5, \ e_1 \wedge e_6+ e_3\wedge e_5, \ e_2\wedge e_6 - e_3\wedge e_4 \rangle$$
		for a basis $e_1,\ldots e_6$ of $W^*$;
		\item $n\le 5$.
	\end{itemize} 
	\end{thm}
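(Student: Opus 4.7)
The plan follows the roadmap sketched in the introduction, assembling the four bullets from two main ingredients: (a) a local analysis of how rank-2 tensors interact with stability, and (b) the Manivel--Mezzetti classification of planes of constant rank 4.

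\emph{Step 1: the rank-2 locus is small.} The first step is to bound the dimension of $\Pa\cap \Gr(2,W^*)$ near a rank-2 point. I will show that if $\omega_0\in \Pa\cap \Gr(2,W^*)$ and $X_0$ is the irreducible component of this intersection through $\omega_0$, then $\codim_{\Pa}X_0 \ge 3$. The argument uses the geometric form of the stability criterion from Section \ref{stab_geom}: the 2-dimensional subspace $L_{\omega_0}\subset W^*$ determines $L_{\omega_0}^{\perp}\subset W$ of dimension $4$, and for every rank-2 tensor $\omega\in X_0$ near $\omega_0$ the subspace $L_{\omega_0}^{\perp}$ meets $\ker(\omega)$ nontrivially. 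If $\codim X_0\le 2$, an infinitesimal argument produces a 2-dimensional $U\subset W$ isotropic for every form in $A$ with fixed image under $\omega\restriction_U$, i.e.\ a destabilizing pair in the sense of Section \ref{stab_geom}. As immediate consequences: for $n\le 2$, a stable $\Pa$ contains no rank-2 tensor, so $\Pa$ has constant rank 4; for $n\ge 3$ we get $\dim(\Pa\cap\Gr(2,W^*))\le n-3$, so a general plane $\Pb\subset \Pa$ avoids $\Gr(2,W^*)$, proving the second bullet.

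\emph{Step 2: invoke Manivel--Mezzetti.} I next apply the classification from \cite{MM}: planes of constant rank 4 in $\Pw$ form exactly four $SL(W)$-orbits, with representatives $\pi_1,\pi_2,\pi_3,\pi_g$. For each of the first three I exhibit an explicit isotropic subspace $U\subset W$ of dimension $s\in\{1,2,3\}$ and apply Theorem \ref{GIT} to conclude that these orbits are unstable. For $\pi_g$ I verify it is only strictly semistable (hence nonstable) by producing a one-parameter subgroup fixing it. Combined with Step 1 and Corollary \ref{unst_sub} (a stable space cannot contain an unstable subspace) this yields simultaneously: for $n=2$, $\Pa$ itself would have constant rank 4 and hence sit in one of four non-stable orbits, a contradiction; for $n=0,1$ the dimension count $15(n+1)-1<35=\dim SL(W)$ already forbids finite stabilizers. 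Together these give $n\ge 3$, the first bullet. Moreover, when $n\ge 3$, any plane $\Pb\subset \Pa$ with $\Pb\cap\Gr(2,W^*)=\emptyset$ avoids the three unstable orbits and is therefore $SL(W)$-equivalent to $\pi_g$, the third bullet.

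\emph{Step 3: the upper bound $n\le 5$.} Using Step 2, up to $SL(W)$ I may write
$$\Pa=\langle \pi_g,\,\omega_3,\dots,\omega_n\rangle,$$
where $\omega_3,\dots,\omega_n$ can be chosen in an $(n-3)$-dimensional irreducible component $X$ of $\Pa\cap\Gr(2,W^*)$. The inclusion $\Pa\subset \Pf$ combined with the sextic locus identity in \eqref{sextic} (applied fiberwise to each $\omega\in \pi_g$) forces every $\omega_i$ to lie in the three-dimensional scroll $Z\simeq S_{(2,2,2)}$, which is a conic bundle $\pi\colon Z\to \pi_g$. Consequently $X$ is a linear subvariety of $Z$ of dimension $n-3$, and the problem reduces to bounding the dimension of linear subvarieties of $Z$. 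Because the general fiber of $\pi$ is a smooth conic (not a line), any linear subvariety of $Z$ projects surjectively onto a linear subvariety of $\pi_g\simeq \D{P}^2$ and meets each generic fiber in at most a point, so its dimension is at most $\dim \pi_g=2$. Thus $n-3\le 2$, giving the fourth bullet.

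\emph{Main obstacle.} The hardest step is the last one: ruling out linear 3-folds in $Z$ requires a careful examination not only of the generic conic fibers but also of the degenerate fibers of $\pi$ (which a priori could be pairs of lines) and of possible multisections. The explicit description of $Z=S_{(2,2,2)}$ inherited from the $\pi_g$-normalization in Step 2, together with the Pfaffian condition that picks out exactly the points of $Z$ among all tensors in $\gamma^{-1}(\text{image})$, will be essential for excluding such unexpected linear subvarieties.
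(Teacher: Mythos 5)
There are two genuine gaps. The most serious is in Step 2. You rule out planes $\Pb\subset\Pa$ lying in the three special orbits by showing those planes are themselves non-stable and then invoking ``a stable space cannot contain an unstable subspace''. That is the \emph{converse} of Corollary \ref{unst_sub}, which says that non-stability passes down to subspaces, not up to overspaces; and the converse is false here in an essential way, since the plane $\pi_g$ is itself non-stable (Lemma \ref{plane_isotropic} exhibits $3$-dimensional isotropic subspaces for it) and nevertheless sits inside the stable spaces classified in Theorems \ref{dim_3} and \ref{dim_4}. So non-stability of $\Pb$ alone can never distinguish $\pi_t,\pi_p,\pi_5$ from $\pi_g$. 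The missing idea is Lemma \ref{lemma_gamma}: because $\Pa\subset\Pf$ and $\Pb$ avoids $\Gr(2,W^*)$, one has $\Pa\subset\langle\gamma(\Pb)\rangle^{\perp}$, so the destabilizing subspaces that $\langle\gamma(\Pb)\rangle$ supplies for the orbits $\pi_t,\pi_p,\pi_5$ (a $\D{P}(\bigwedge^2 L_{\omega}^{\perp})$, a $\Gr(2,3)$-plane, a $\D{P}(v\wedge W)$ respectively) lie in $\Pa^{\perp}$ and destabilize all of $\Pa$, not merely $\Pb$. Without this transport of the destabilizing data from $\Pb$ to $\Pa$, the third bullet does not follow, and neither does the exclusion of $n=2$ in the first bullet, which relies on the same dichotomy among the four Manivel--Mezzetti orbits.

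The second gap is in Step 3: an irreducible component $X$ of $\Pa\cap\Gr(2,W^*)$ is cut out on $\Pa$ by the Pl\"ucker quadrics and is not linear in general (for $n=4$ it can be a smooth conic), so the assertion that ``$X$ is a linear subvariety of $Z$'' and the ensuing bound $\dim X\le 2$ via the fibration $\pi\colon Z\to\Pb$ do not apply. The containment $X\subset Z$ only gives $n-3\le\dim Z=3$, hence $n\le 6$; the case $n=6$ must be excluded by a separate argument, namely that $n=6$ would force $Z=\Pa\cap\Gr(2,W^*)\subset\Pa\simeq\D{P}^6$, which is impossible because $Z$ is non-degenerate in $\Lambda\simeq\D{P}^8$. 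Your ``main obstacle'' paragraph, which worries about degenerate fibers and multisections of $\pi$, addresses neither of these points.
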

We will prove the theorem in several steps.
In the first place we look at the behavior of the intersection of a linear space satisfying the hypotheses of the theorem with the Grassmannian $\Gr(2,W^*)$. More generally, whenever dealing with subspaces $\Pa$ of the Pfaffian hypersurface, it is indeed natural to ask whether these intersect the singular locus of $\Pf$, that is $\Gr(2,W^*)$, and consequently to determine how this intersection might affect their stability.
The answer to the first question is positive whenever $n:=\dim(\Pa)\ge 3$. 
It is indeed proven in \cite{MM} the following:
\begin{prop}[Manivel-Mezzetti (\cite{MM}, Cor.11)] There exists no $\D{P}^3$ of skew-symmetric matrices of order six and constant rank four.
\end{prop}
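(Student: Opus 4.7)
The plan is to translate the non-existence statement into a non-existence result for a certain rank 2 vector bundle on $\D{P}^3$ and then derive a contradiction from Chern class arithmetic. Suppose such a $\Pa \simeq \D{P}^3 \subset \Pw$ of constant rank 4 exists. The family of skew-symmetric maps $u \mapsto \omega(u, \cdot )$, $\omega \in A$, assembles into a bundle morphism on $\Pa$
$$M \colon W \otimes \mathcal{O}_{\Pa} \longrightarrow W^* \otimes \mathcal{O}_{\Pa}(1).$$
Constant rank 4 ensures that $K := \ker M$ is a rank 2 subbundle of $W \otimes \mathcal{O}_{\Pa}$ and $I := \im M$ is a rank 4 subbundle of $W^* \otimes \mathcal{O}_{\Pa}(1)$, giving two short exact sequences
$$0 \to K \to W \otimes \mathcal{O}_{\Pa} \to I \to 0, \qquad 0 \to I \to W^* \otimes \mathcal{O}_{\Pa}(1) \to K^*(1) \to 0,$$
where the identification of the cokernel of $M$ with $K^*(1)$ uses that $\im(\omega) = \ker(\omega)^{\perp} \subset W^*$ for any skew-symmetric form $\omega$.

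Next I would exploit the skew-symmetry of $M$ to derive $I \cong I^*(1)$. Dualizing the second sequence and twisting by $\mathcal{O}_{\Pa}(1)$ yields $0 \to K \to W \otimes \mathcal{O}_{\Pa} \to I^*(1) \to 0$; since $M$ is self-adjoint (up to sign, after twisting), the leftmost arrow coincides with the original inclusion $K \hookrightarrow W \otimes \mathcal{O}_{\Pa}$, so by uniqueness of cokernels $I \cong I^*(1)$. Taking first Chern classes, the first sequence gives $c_1(I) = -c_1(K)$, while $c_1(I^*(1)) = -c_1(I) + 4h$ since $\rk I = 4$. Equating forces $c_1(I) = 2h$ and $c_1(K) = -2h$.

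The final step is a top Chern class computation on $\D{P}^3$. Writing $c_2(K) = a\,h^2$ with $a \in \Z$ and using $c(I) = c(K)^{-1}$, a direct expansion yields $c_3(I) = (8 - 4a)\,h^3$. On the other hand, the splitting principle applied to $I^*(1)$ gives
$$c_3(I^*(1)) = 4h^3 + 3h^2 c_1(I^*) + 2h\, c_2(I^*) + c_3(I^*) = (2a - 2)\,h^3.$$
Equating $c_3(I) = c_3(I^*(1))$ forces $a = 5/3$, contradicting the integrality of $c_2(K)$.

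The main technical obstacle will be the rigorous identification $I \cong I^*(1)$: one must verify that dualizing and twisting the second exact sequence genuinely reproduces the original inclusion $K \hookrightarrow W \otimes \mathcal{O}_{\Pa}$, rather than some a priori different rank 2 subbundle, and this is exactly the point where skew-symmetry of $M$ intervenes. Once this self-duality is in hand, the remainder of the proof is routine Chern class arithmetic, and the dimension $d = 3$ is what makes the linear equation $8 - 4a = 2a - 2$ deliver a non-integer value.
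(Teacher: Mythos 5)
Your proposal is correct, but note that the paper does not actually prove this statement: it is imported verbatim from Manivel--Mezzetti (\cite{MM}, Cor.~11), so there is no internal argument to compare against. What you give is a self-contained proof in the standard ``kernel/image bundle'' style, and it checks out. The constant-rank hypothesis does make $K=\ker M$ and $I=\im M$ subbundles of ranks $2$ and $4$; the identification of $\coker M$ with $K^*(1)$ is exactly the pointwise fact that $\im(\omega)$ is the annihilator of $\ker(\omega)$ for a skew form; and the self-duality $I\cong I^*(1)$ does follow as you say, because the surjection $W^*\otimes\mathcal{O}(1)\to K^*(1)$ is the transpose of the inclusion $K\hookrightarrow W\otimes\mathcal{O}$, so dualizing and twisting reproduces the same inclusion and hence the same cokernel. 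I verified the arithmetic: with $c_1(K)=-2h$ and $c_2(K)=ah^2$ one gets $c(I)=c(K)^{-1}$, hence $c_3(I)=(8-4a)h^3$, while $c_3(I^*(1))=4h^3-3\cdot 2h^3+2(4-a)h^3-(8-4a)h^3=(2a-2)h^3$, forcing $a=5/3$, a contradiction; it is also reassuring that the $c_2$ identity is automatically satisfied, which is consistent with the existence of constant-rank-$4$ planes (the four orbits $\pi_g,\pi_t,\pi_p,\pi_5$ listed in the paper). This is essentially the same circle of ideas used in \cite{MM} itself (and in the related work of Boralevi--Faenzi--Mezzetti cited here), so your argument is a legitimate stand-alone substitute for the citation rather than a genuinely new route.
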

The main result of loc.cit is the complete classification (up to the $PGL(W)$ action) of planes of skew-symmetric forms all having rank equal to four.
Starting from a plane of such a kind, it is then easy to construct explicitly 3-dimensional linear subspaces of $\Pf$ intersecting $\Gr(2,W^*)$ in finitely many points. Therefore for any $\Pa\simeq \D{P}^3, \ \Pa\subset \Pf$, we have $\dim(\Pa\cap \Gr(2,W^*))\ge 0$ so that, more generally we get: 

\begin{cor}\label{cor_MM}
Let $\Pa \subset \Pf$ be a linear space of dimension $n\ge 3$. Then \mbox{$\Pa\cap \Gr(2,W^*)$} is non empty and has codimension (in $\Pa$) at most $3$.
\end{cor}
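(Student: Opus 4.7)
The plan is to derive the corollary directly from the Manivel--Mezzetti proposition cited immediately above via a standard dimension count inside $\Pa$. The key point is that the proposition forbids any $3$-dimensional linear space contained entirely in $\Pf \setminus \Gr(2,W^*)$, so I only need to argue that, if $\Pa \cap \Gr(2,W^*)$ were too small, a generic $3$-plane of $\Pa$ would avoid it.

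First I would observe that it suffices to prove the codimension bound $\codim_{\Pa}(\Pa\cap \Gr(2,W^*))\le 3$; since $n\ge 3$, this automatically gives \mbox{$\dim(\Pa\cap \Gr(2,W^*))\ge 0$}, so non-emptiness follows for free. To set up the contradiction, I would suppose instead that $\dim(\Pa\cap \Gr(2,W^*))\le n-4$ and consider a general $3$-dimensional linear subspace $\Pb\subset\Pa$. Inside $\Pa\simeq\D{P}^n$, the two subvarieties $\Pb$ and $\Pa\cap \Gr(2,W^*)$ have dimensions $3$ and at most $n-4$, whose sum is strictly smaller than $n$; by the standard genericity argument (moving $\Pb$ within the Grassmannian $\Gr(3,\Pa)$), we may therefore assume $\Pb\cap \Gr(2,W^*)=\emptyset$.

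But $\Pb$ is then a $3$-dimensional linear subspace of $\Pf$ whose every element has rank at least $4$, while the inclusion $\Pb\subset \Pa\subset \Pf$ forces every element of $\Pb$ to have rank at most $4$. Consequently $\Pb$ is a $\D{P}^3$ of skew-symmetric forms of constant rank $4$, contradicting the Manivel--Mezzetti proposition. Hence $\codim_{\Pa}(\Pa\cap \Gr(2,W^*))\le 3$, completing the proof.

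I do not expect any serious obstacle here: the only subtle step is justifying that a generic $3$-plane in $\D{P}^n$ misses a fixed subvariety of dimension $\le n-4$, which is a routine incidence/dimension computation on $\Gr(3,\Pa)$ and could be quickly cited rather than proved in detail.
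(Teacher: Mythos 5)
Your proof is correct and follows essentially the same route as the paper: both deduce from the Manivel--Mezzetti result (no $\D{P}^3$ of constant rank $4$) that every $3$-plane in $\Pf$ meets $\Gr(2,W^*)$, and then conclude $\codim_{\Pa}(\Pa\cap\Gr(2,W^*))\le 3$ by the standard fact that a general $3$-plane in $\D{P}^n$ avoids any closed subset of dimension $\le n-4$. You merely spell out the incidence count that the paper leaves implicit in its ``more generally we get'' step.
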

We show now that if furthermore $\Pa$ is stable, $\codim(\Pa\cap \Gr(2,W^*))$ must be exactly 3.
\begin{prop}\label{codim_int}
Let $\Pa \subset \Pf$ be a stable linear space of dimension $n$ such that $\Pa \cap \Gr(2,W^*)\ne \emptyset$. Then $\Pa \cap \Gr(2,W^*)$ has codimension (in $\Pa$) 
at least 3 and each of its components of maximal dimension is smooth.
\end{prop}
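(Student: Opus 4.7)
The plan is to reduce the codimension bound to a projective-geometric analysis inside the Plücker quadric $Q_\omega$ associated to a point $\omega$ of the intersection, and to produce an explicit $3$-dimensional destabilizing isotropic subspace of $W$ whenever the bound fails.

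Fix $\omega \in \Pa \cap \Gr(2,W^*)$. Since $\Pa \subset \Pf$ and $\omega \in \Sing(\Pf)$, the characterization of $D_\omega$ following \eqref{sextic} forces $\Pa \subset Q_\omega$. The quadric $Q_\omega$ is the quadratic part of the Pfaffian at $\omega$ and has rank $6$: its vertex is the $8$-plane $V := \D{T}_\omega \Gr(2,W^*) = \D{P}(L_\omega \wedge W^*)$, and its base is the Klein quadric $B := \Gr(2, L_\omega^c) \simeq \Gr(2,4)$ living in any complementary $\D{P}^5 = \D{P}(\bigwedge^2 L_\omega^c)$, for a chosen complement $L_\omega^c$ of $L_\omega$ in $W^*$; note that $B$ is itself contained in $\Gr(2,W^*)$. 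As a linear subspace of the cone $Q_\omega$, $\Pa$ decomposes as a join $\Pa = V' \ast \pi(\Pa)$ with $V' := \Pa \cap V$ and $\pi(\Pa) \subset B$ a linear subspace (empty if $\Pa \subset V$). The formula $\dim \Pa = \dim V' + \dim \pi(\Pa) + 1$ together with $V' = \Pa \cap \D{T}_\omega \Gr(2,W^*)$ identifies the Zariski tangent dimension of $\Pa \cap \Gr(2,W^*)$ at $\omega$ with $n - 1 - \dim \pi(\Pa)$. Since maximal linear subspaces of $B$ have dimension $2$, this tangent dimension is always at least $n-3$, and the proposition reduces to the reverse bound $\dim \pi(\Pa) \geq 2$ under stability.

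I prove this by contraposition. If $\Pa \subset V$, the $4$-dimensional $\ker\omega \subset W$ is isotropic for every form in $A$, so $\Pa$ is unstable. Otherwise, assume $\dim \pi(\Pa) \in \{0,1\}$: in the point case $\pi(\Pa) = \{p\}$ corresponds to a $2$-plane $L_p \subset L_\omega^c$, and in the line case $\pi(\Pa)$ is a pencil in $\Gr(2, L_\omega^c)$ having a common $1$-dimensional factor $\langle v_1 \rangle \subset L_\omega^c$. Picking any $v_1 \in L_p$ in the first case, set $L := L_\omega + \langle v_1 \rangle \subset W^*$, a $3$-plane in both situations. Then $A \subset L \wedge W^*$: the affine cone over $V'$ lies in $L_\omega \wedge W^* \subset L \wedge W^*$, while the affine cone over $\pi(\Pa)$ is spanned by tensors $v_1 \wedge v$ with $v \in L_\omega^c$, hence lies in $v_1 \wedge W^* \subset L \wedge W^*$. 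Consequently, the $3$-plane $L^\perp \subset W$ is isotropic for every form in $A$, which destabilizes $\Pa$ by the $s=3$ non-stability criterion of Subsection~\ref{stab_geom}.

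For stable $\Pa$ we therefore have $\dim(\Pa \cap \D{T}_\omega \Gr(2,W^*)) = n-3$ at every $\omega \in \Pa \cap \Gr(2,W^*)$, so the codimension of $\Pa \cap \Gr(2,W^*)$ in $\Pa$ is at least $3$. For any maximal-dimensional component $C$, the same pointwise bound gives $\dim T_\omega C \leq n - 3 = \dim C$ at every $\omega \in C$, forcing equality and hence smoothness. The main step is the cone description of $Q_\omega$ with vertex $\D{T}_\omega \Gr(2,W^*)$ and base $\Gr(2,4)$; once that is in place, the destabilizer is produced by inspection of the join decomposition.
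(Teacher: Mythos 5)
Your argument is correct and is essentially the paper's own proof: both project $\Pa$ from $\D{T}_{\omega}\Gr(2,W^*)$, the vertex of the Pl\"ucker cone $Q_{\omega}\supset\Pa$, note that the image is a linear subspace of a $\Gr(2,4)$ and hence has dimension at most $2$, and show that image dimension $\le 1$ produces a $3$-dimensional isotropic subspace of $W$ contradicting the $s=3$ criterion of Section~\ref{stab_geom} --- the only difference being that the paper locates this destabilizer dually, as a $\Gr(2,3)$-plane inside the low-rank quadric section $\Pa^{\perp}\cap\Gr(2,\ker\omega)$, whereas you exhibit $(L_{\omega}+\langle v_1\rangle)^{\perp}$ directly. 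One small inaccuracy: $\Pa$ is in general \emph{not} the join $V'\ast\pi(\Pa)$ (a point of $\Pa$ outside $V'$ need not lie on a line meeting both $V'$ and $\pi(\Pa)$); however, the two consequences you actually use --- the dimension count $\dim\pi(\Pa)=n-\dim V'-1$ for a linear projection and the inclusion of $A$ in $(L_{\omega}\wedge W^*)+\langle v_1\rangle\wedge W^*$, which follows from the direct-sum decomposition of $\bigwedge^2 W^*$ rather than from the join claim --- are both valid, so this is a misstatement rather than a gap.
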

\begin{proof}
    Let $\Pa$ be a linear space satisfying the hypotheses of the proposition and denote by $X$ the intersection $X:=\Pa \cap \Gr(2,W^*)$. 
    We notice that whenever we take a point $\omega \in X$ and a linear subspace $V_{\omega}$ of $W^*$ complementary to $L_{\omega}$, we get a decomposition of $\D{P}(\bigwedge^2 W^*)$ as :
    $$\D{P}(\bigwedge^2 W^*)=\langle \D{T}_{\omega}\Gr(2,W^*), \D{P}(\bigwedge^2 V_{\omega})\rangle.$$
    The projective tangent space to $\Gr(2,W^*)$ at $\omega$ is indeed the 8 dimensional linear space spanned by points corresponding to lines in $\D{P}(W^*)$ meeting $\D{P}(L_{\omega})$. Points in $\Gr(2,V_{\omega})$, that parametrize lines in $\D{P}(V_{\omega })$, span instead the 5-dimensional linear space $\D{P}(\bigwedge^2 V_{\omega})$; since this latter is disjoint from $\D{T}_{\omega}\Gr(2,W^*)$ we obtain the aforementioned decomposition.   
    We consider now the linear projection $\pi_{\omega}$ 
    from $\D{T}_{\omega}\DM{\Gr}(2,W^*)$ to  $\D{P}(\bigwedge^2 V_{\omega})$:
    	$$\pi_{\omega}: \D{P}(\bigwedge^2 W^*) \dashrightarrow \D{P}(\bigwedge^2 V_{\omega}).$$
    If ever there exists a point $\omega\in X$ such that $\dim(\D{T}_{\omega}X)\ge n-2$, we would get that $\pi_{\omega}(\Pa)$, the image of $\Pa$ under $\pi_{\omega}$, is a linear space of dimension $m\le 1$. (Here we use the convention $\dim(\pi_{\omega}(\Pa))=-1$ if ever \mbox{$\pi_{\omega}(\Pa)=\emptyset$}).
    Since moreover we are supposing that $\Pa \subset \Pf$, we must have $\Pa \subset Q_{\omega}$, where $Q_{\omega}\in |\mathcal{Q}|$ denotes the quadric tangent cone to $\Pf$ at $\omega$ (as described in section \ref{sect_hw}). Since $\Sing(Q_{\omega})=\D{T}_{\omega}\Gr(2,W^*)$, this condition is fulfilled if and only if $\pi_{\omega}(\Pa)\subset \Gr(2, V_{\omega})$. 
    This last inclusion 
    implies that \mbox{${\Pa}^{\perp}\cap \Gr(2, {L_{\omega}}^{\perp})$} is a quadric hypersurface of rank $4-m$ of the linear space \mbox{${\Pa}^{\perp}\cap  \D{P}(\bigwedge^2 {L_{\omega}}^{\perp})={\pi_{\omega}(\Pa)}^{\perp} \cap  \D{P}(\bigwedge^2 {L_{\omega}}^{\perp})\simeq \D{P}^{4-m}$}. 
 As such a linear section of the Grassmannian $\Gr(2,{L_{\omega}}^{\perp})\simeq \Gr(2,4)$ always contains a plane isomorphic to $\Gr(2,3)$, there exists a linear space $U< {V_{\omega}}^{\perp}$ of dimension 3 such that \mbox{$\D{P}(\bigwedge^2 U)\subset \Pa^{\perp}$}.
    We can therefore conclude that $\Pa$ can not be stable. The stability of $\Pa$ implies thus that $\forall\: \omega \in X, \ \dim(\D{T}_{\omega}X)\le n-3$; as a consequence $X$ must have codimension at least 
    3 and moreover each $n-3$ dimensional component of $X$ must be smooth. 
    \end{proof}

From this proposition we deduce what follows: given a $n$-dimensional stable linear space $\Pa\subset \Pf$ if ever $n \le 2$, $\Pa$ must then have constant rank 4, if $n \ge 3$ instead, $\Pa\cap \Gr(2,W^*)$ has codimension 3 (in $\Pa$) so that $\Pa$ must contain a plane of constant rank 4. 
Both cases can be approached  studying at first planes of tensors of constant rank 4.

\subsection{ Destabilizing planes of constant rank 4}
2-dimensional linear spaces $\Pb$ parameterizing tensors all having rank equal to 4 have been classified, up to the action of $PGL(W)$, in \cite{MM}.  
The authors proved that such a $\Pb$ belongs to the $PGL(W)$-orbit of one of the following planes 
\begin{equation}\label{list_plane}\begin{split}
\pi_g=&\DM{\langle} e_1\wedge e_4 + e_2\wedge e_5, e_1\wedge e_6+ e_3\wedge e_5, e_2\wedge e_6 - e_3 \wedge e_4\DM{\rangle}\\
\pi_t=&\DM{\langle} e_1\wedge e_3 +e_2\wedge e_4, e_1\wedge e_4+e_2\wedge e_5, e_1\wedge e_5+ e_2\wedge e_6\DM{\rangle}\\
\pi_p=&\DM{\langle}e_1\wedge e_4+e_2\wedge e_3, e_1\wedge e_5+ e_3\wedge e_4, e_1\wedge e_6 +e_2\wedge e_4\DM{\rangle}\\
\pi_5=&\DM{\langle}e_1\wedge e_4+e_2\wedge e_3, e_1\wedge e_5+e_2\wedge e_4, e_2\wedge e_5 + e_3\wedge e_4\DM{\rangle}
\end{split}\end{equation}

	\begin{prop}\label{plane_unst}Let $\Pa\subset \Pf$ be a linear space of dimension $n\ge 2$ and let \mbox{$\Pb\simeq \D{P}^2$}, $\Pb\subset \Pa$ be a plane of tensors of constant rank 4. 
		If $\Pb$ is $PGL(W)$\DM{-equivalent} either to $\pi_t, \ \pi_p, \ \pi_5$, then $\Pa$ can't be stable.
	\end{prop}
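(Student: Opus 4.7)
The strategy is to identify, for each of the three planes $\pi_t, \pi_p, \pi_5$, an explicit destabilizing subspace $U\subset W$ in the sense of Section \ref{stab_geom}, and to show that this non-stability propagates to any $\Pa\supset \pi_*$ satisfying $\Pa\subset \Pf$. A direct inspection of the bases in \eqref{list_plane} yields three witnesses: $\pi_5$ has $u_6$ in the kernel of every generator (no $e_6$ appears), so it is not even semistable, witnessed by $U=\langle u_6\rangle$, $U'=W$ ($s=1$); $\pi_t$ has the $4$-dimensional subspace $\langle u_3, u_4, u_5, u_6\rangle = \langle e_1, e_2\rangle^{\perp}$ completely isotropic for every generator (each basis tensor has one index in $\{1,2\}$ and the other in $\{3,\ldots,6\}$), so $\pi_t\subset \D{T}_{e_1\wedge e_2}\Gr(2,W^*)$ and $\pi_t$ is unstable via $s=3$; finally every generator of $\pi_p$ satisfies both $\omega(u_5, \cdot)\in \langle e_1\rangle$ and $\omega(u_6, \cdot)\in \langle e_1\rangle$, giving two independent $s=1$ witnesses.

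The heart of the proof is the propagation step. When $n=2$ the claim follows directly from the preceding paragraph. When $n\ge 3$, I take an additional generator $\omega_0\in A\setminus B$ and exploit the constraint $\omega+t\omega_0\in \Pf$ for every $\omega\in \pi_*$ and every $t\in \D{C}$. Expanding the $6\times 6$ Pfaffian of $\omega+t\omega_0$ as a polynomial in $t$ and in the parameters on $\pi_*$ yields a nontrivial system of equations on the entries of $\omega_0$. Organized in the block decomposition of $W$ adapted to the destabilizing subspace of $\pi_*$, these relations constrain $\omega_0$ in such a way that either the same $U$ remains a destabilizing witness for $\Pa$ (often in a weaker form, e.g.\ passing from non-semistable to strictly semistable), or a subspace built by combining the structure of $\pi_*$ with that of $\omega_0$ destabilizes $\Pa$ via $s=2$ or $s=3$. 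Concretely for $\pi_5$: the image $\{\omega(u_6,\cdot):\omega\in A\}$ is spanned by the values $\omega_0(u_6,\cdot)$ as $\omega_0$ ranges over extra generators; if this image lies in a single line of $W^*$, then $\langle u_6\rangle$ is an $s=1$ witness for $\Pa$; otherwise, the pencil Pfaffian conditions force a further isotropic structure.

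The main obstacle is controlling the propagation in the higher-dimensional cases ($n=4,5$), where multiple extra generators can a priori contribute independently to the relevant destabilizing quantities. The key technical step is to rewrite the matrix $M_A$ of linear forms in the block decomposition adapted to the destabilizing subspace of $\pi_*$ and verify that one of the zero patterns recorded after Theorem \ref{GIT} is forced to persist. This reduces the problem to a finite check carried out separately for $\pi_t$, $\pi_p$, and $\pi_5$; in each case the pattern survives because the destabilizing data of $\pi_*$ already fix enough entries of $M_A$ that the Pfaffian identities on the pencil cannot be solved unless the remaining free entries also fit the pattern.
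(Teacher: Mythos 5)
Your identification of destabilizing witnesses for the three planes themselves is correct ($u_6\in\ker\omega$ for all $\omega\in\pi_5$; the $4$-dimensional space $\langle e_1,e_2\rangle^{\perp}$ isotropic for $\pi_t$; the $s=1$ witnesses $u_5,u_6$ for $\pi_p$), and it matches what the paper records in Remark \ref{plane_not_stab}. But that is not where the content of the proposition lies: a destabilizing subspace for $\Pb$ need not destabilize a larger $\Pa\supset\Pb$, and your ``propagation step'' --- the heart of the argument --- is only sketched, not proved. You assert that expanding $\Pf(\omega+t\omega_0)$ in $t$ and in the parameters of $\pi_*$ forces one of the zero patterns of Theorem \ref{GIT} to persist, and that this ``reduces to a finite check'' which ``survives in each case''; none of these checks is carried out, and the dichotomy you state for $\pi_5$ (``if the image $\{\omega(u_6,\cdot)\}$ lies in a single line\dots otherwise\dots'') does not obviously terminate in a destabilizing subspace. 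For $\pi_p$ in particular, the witness that actually propagates is not either of your $s=1$ witnesses but the $3$-dimensional isotropic space $\langle u_4,u_5,u_6\rangle$ (an $s=3$ configuration), which your block-decomposition heuristic gives you no systematic way to find.

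The paper closes this gap with a single structural observation (Lemma \ref{lemma_gamma}) that you are missing: since $\Pa$ is a \emph{linear} space contained in the cubic $\Pf$, it lies in $\D{T}_{\omega}\Pf=\gamma(\omega)^{\perp}$ for every $\omega\in\Pb$, hence $\langle\gamma(\Pb)\rangle\subset\Pa^{\perp}$, where $\gamma$ is the Gauss map sending a rank-$4$ tensor to its kernel. This transfers the problem entirely to the plane $\Pb$: one computes the span of the Veronese surface $\gamma(\Pb)$ for each of $\pi_t,\pi_p,\pi_5$ (it is $\D{P}(\bigwedge^2 L_{\omega}^{\perp})\simeq\D{P}^5$, a $\D{P}^5$ containing the plane $\D{P}(\bigwedge^2 U_{12})\simeq\Gr(2,3)$, and $\D{P}(u_6\wedge W)\simeq\D{P}^4$ respectively) and reads off a destabilizing subspace of $\Pa^{\perp}$ that depends only on $\Pb$, with no case analysis on the extra generators of $A$ and no dependence on $n$. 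Note that this uses only the \emph{linear} part of the condition that each line $\overline{\omega\,\omega_0}$ lies in $\Pf$; your plan to exploit the full cubic Pfaffian identity is both more laborious and, as written, incomplete. To repair your proof you would either need to prove and use this lemma, or genuinely perform the block-matrix computations you allude to for every $n\le 5$ and every $\pi_*$ --- a substantial amount of work that the current text does not contain.
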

For the proof of the proposition we will need the following lemma:
\begin{lem}\label{lemma_gamma} Let $\Pa\subset \Pf$ be a $n$-dimensional linear space and let $\Pb\subset \Pa$ be a linear space of tensors all having rank equal to four. 
Then $\Pa\subset {\langle\gamma(\Pb)\rangle}^{\perp}$.
\end{lem}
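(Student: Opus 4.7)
The plan is to exploit the explicit description of the Gauss map of $\Pf$ recalled in the preliminaries: for each rank-$4$ tensor $\omega$, the projective tangent hyperplane $\D{T}_{\omega}\Pf \subset \Pw$ is precisely the hyperplane cut out by the linear form $\gamma(\omega) = \ker(\omega) \in \Gr(2,W) \subset \D{P}(\bigwedge^2 W)$, via the natural perfect pairing between $\bigwedge^2 W^*$ and $\bigwedge^2 W$. Thus to place $\Pa$ inside $\gamma(\omega)^{\perp}$ it is enough to verify the tangency $\Pa \subset \D{T}_{\omega}\Pf$.

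First I would fix an arbitrary $\omega \in \Pb$. Since every tensor in $\Pb$ has rank $4$, the point $\omega$ lies in the smooth locus $\Pf \setminus \Gr(2,W^*)$, so $\gamma(\omega)$ is well defined and $\D{T}_{\omega}\Pf$ is the hyperplane determined by $\gamma(\omega)$. Next, from the hypothesis $\Pa \subset \Pf$ together with $\omega \in \Pa$, I would deduce the standard fact that any line in a hypersurface through a smooth point is tangent to the hypersurface at that point: writing $\Pa = \langle \omega, \Pa'\rangle$ with $\Pa' \simeq \D{P}^{n-1}$ and $\omega \notin \Pa'$, every line joining $\omega$ to a point of $\Pa'$ lies in $\Pf$ and is therefore contained in $\D{T}_{\omega}\Pf$. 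Hence $\Pa \subset \D{T}_{\omega}\Pf$, which under the pairing above reads $\Pa \subset \gamma(\omega)^{\perp}$.

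Since this inclusion holds for every $\omega \in \Pb$, intersecting over all such $\omega$ gives
$$\Pa \;\subset\; \bigcap_{\omega \in \Pb} \gamma(\omega)^{\perp} \;=\; \Big\langle \gamma(\Pb)\Big\rangle^{\perp},$$
which is the desired conclusion. There is no real obstacle here: the lemma is essentially a geometric translation of the tangency of linear subspaces to a cubic hypersurface through their points, the only nontrivial ingredient being the identification of $\D{T}_{\omega}\Pf$ with the hyperplane dual to $\gamma(\omega)$, which has already been established in the preliminaries.
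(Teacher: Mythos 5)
Your proof is correct and follows essentially the same route as the paper: both arguments reduce the claim to the inclusion $\Pa\subset \D{T}_{\omega}\Pf=\gamma(\omega)^{\perp}$ for every $\omega\in\Pb$, using that $\Pb$ avoids $\Gr(2,W^*)=\Sing(\Pf)$, and then intersect over $\omega$. The only difference is that you spell out the standard tangency argument for lines through a smooth point, which the paper takes for granted.
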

\begin{proof} Let $\Pa, \ \Pb$ be a pair of linear spaces satisfying the hypotheses of the lemma. Since all tensors in $\Pb$ have rank 4 (i.e. $\Pb\cap \Gr(2,W^*)=\emptyset$), $\Pb$ lies entirely in the regular locus of $\gamma$, the Gauss map defined by $\Pf$. The inclusion $\Pa\subset \Pf$ implies that $\forall\: \omega \in \Pa, \ \D{P}(A)\subset \D{T}_{\omega}\Pf$, so that, in particular, $\forall\: \omega \in \Pb$ we must have that $\D{P}(A)\subset \D{T}_{\omega}\Pf={\gamma(\omega)}^{\perp}$. This means that $\D{P}(A)$ must be contained in all hyperplanes belonging to $\gamma(\D{P}(B))$ and since $\D{P}(A)$ is a linear space this happens if and only if $\D{P}(A)\subset {\langle \gamma(\Pb)\rangle}^{\perp}$.
\end{proof}

	\begin{proof}[Proof of prop. \ref{plane_unst}]
	Applying lemma \ref{lemma_gamma}, we get an inclusion $\Pa\subset {\langle \gamma(\Pb)\rangle}^{\perp}$, hence $\langle \gamma(\Pb)\rangle \subset {\Pa}^{\perp}$. 
		By means of the classification (\ref{list_plane}), we can compute directly $\gamma(\Pb)$ (a detailed description of these images can be found in \cite{MM}) and see that whenever $\Pb$ is $PGL(W)$\DM{-equivalent} to $\pi_p, \ \pi_t$ or $\pi_5$ we can always find a linear subspace of \mbox{$\DM{\langle}\gamma(\Pb)\DM{\rangle}\subset {\Pa}^{\perp}$} that prevents the stability of $\Pa$.  
		
		\begin{itemize}
			\item $\Pb\in PGL(W)\cdot\pi_t$. A linear space of this kind is always contained in a tangent space $\D{T}_{\omega}\DM{\Gr}(2,W^*)$ to $\DM{\Gr}(2,W^*)$ at a point $\omega\in \DM{\Gr}(2,W^*)$ (for the choice of coordinates in (\ref{list_plane}), $\pi_t \subset \D{T}_{(e_1\wedge e_2)}\DM{\Gr}(2,W^*)$.)
			In this case $\gamma|_ {\Pb}$ is defined by the complete linear system $|\mathcal{O}_{\Pb}(2)|$ and $\gamma(\Pb)$ is a Veronese surface contained in $\DM{\Gr}(2, L_{\omega}^{\perp})\subset \D{P}(\bigwedge^2 {L_{\omega}}^{\perp},  )\simeq \D{P}^5$. Thus $\D{P}(\bigwedge^2 {L_{\omega}}^{\perp})\subset \Pa^{\perp}$, i.e. the restriction of every tensor in $A$ to $L_{\omega}^{\perp}$ is zero. From the discussion held in section \ref{stab_geom} we conclude that $\D{P}(A)$ can't even be semistable.

			\item $\Pb \in PGL(W) \cdot \pi_p$. In this case the plane $\Pb$ contains a pencil of \textit{special lines}, namely lines of tensors of constant rank 4 entirely contained in $\D{P}(\bigwedge W')$, for a 5-dimensional subspace $W'$ of $W^*$. 
			For any pair $l_1, \ l_2$ of generators of this pencil, there exists a pair of points $\omega_i \in \DM{\Gr}(2, W^*), \omega_i \in \D{T}_{\omega_j}\Gr(2,W^*),\ i\ne j$ such that $l_i\subset \D{T}_{\omega_i}\DM{\Gr}(2,W^*), \ i=1, 2 $. Denote by $U_{12}<W$ the 3-dimensional space $\langle L_{\omega_1}, L_{\omega_2}\rangle^{\perp}$ (For $\D{P}(B)=\pi_p$ as in (\ref{list_plane}) we can take for example the lines 
			\mbox{$l_1= \overline{e_1\wedge e_4+e_2\wedge e_3, e_1\wedge e_5+ e_3\wedge e_4}$}, $l_2=\overline{e_1\wedge e_4+e_2\wedge e_3,  e_1\wedge e_6 +e_2\wedge e_4}$, so that $\omega_1=e_1\wedge e_3, \ \omega_2= e_1\wedge e_2$ and $U_{12}=\DM{\langle}e_1,e_2,e_3\DM{\rangle}^{\perp}$).
			The map $\gamma|_ {\Pb}$ is still a Veronese embedding; the span of the Veronese surface $\gamma(\Pb)$ contains the plane $\D{P}(\bigwedge^2 U_{12})\simeq \DM{\Gr}(2,3)$ that prevents the stability of $\Pa$.
			\item $\Pb \in PGL(W)\cdot \pi_5$. In this case there exists a vector $u\in W$ such that we have an \DM{inclusion} \mbox{$\D{P}(B)\subset \D{P}(\bigwedge^2 u^{\perp})\simeq \D{P}(\bigwedge^2 \D{C}^5)$}; this time $\DM{\langle}\gamma(\Pb)\DM{\rangle}$ is equal to $\D{P}(v\wedge W)\simeq \D{P}^4$, the Schubert variety of 2-dimensional subspaces of $W$ containing $u$
				(for the choice of coordinates in (\ref{list_plane}), $u$ will be defined by the intersection of the hyperplanes $e_1,\DM{\ldots,} e_5$).
			In this \DM{situation} the entire space $\Pa$ must thus be contained in $\D{P}(\bigwedge^2 v^{\perp})$; therefore it can't  even be semistable.  
		\end{itemize}
	\end{proof}

\begin{rmk}\label{plane_not_stab}
From the proof of the previous proposition we can observe that the planes of type $\pi_t,\ \pi_5, \pi_p$ are not stable and that moreover $\pi_t$ and $\pi_5$ are even unstable.
	\end{rmk}
From \ref{plane_unst} we learn that any stable linear subspace of $\Pf$ of dimension greater than or equal to two can only contain planes of constant rank 4 that lie in the $PGL(W)$ orbit of $\pi_g$. Adopting the nomenclature of \cite{MM} we will refer to these planes as \textit{planes of general type}. 
Next section is devoted to a more detailed analysis 
of these planes. 
\subsection{Planes of general type}

Throughout the rest of the section $\Pb$ will denote a plane of constant rank 4 of general type.
These planes have a \DM{convenient} characterization that we will \DM{use} in the proofs of our main results. 
Indeed, following \cite{MM}, any plane $\Pb$ of general type can be characterized as the locus of tensors of the form:
$$ \omega= x\wedge f(y) - y\wedge f(x), \hspace{2mm} \text{for}\  x,y \in C,$$
with $C$ a 3 dimensional subspace of $W^*$ and $f$ a linear isomorphism from $C$ to a linear space $D\subset W^*$ disjoint from $C$. 

The restriction of the Gauss map to $\Pb$:

$$\gamma\restriction_{\Pb}:\Pb\longrightarrow \D{P}(\bigwedge^2 W)$$
is defined by the complete linear system $|\mathcal{O}_{\Pb}(2)|$. The Veronese surface $\gamma(\Pb)$ is the the variety of lines of the form $\overline{u f^T(u)}$ where:
$$f^T: D^*\to C^*, \hspace{2mm} D^*\simeq C^{\perp}, \ C^*\simeq D^{\perp}$$
is the transpose of $f$.

\vspace{2mm}

Throughout the rest of the section we fix the basis $e_1, \DM{\ldots,} e_6$ of $W^*$ in such a way that $\Pb$ is written as \mbox{$\Pb=\pi_g=\DM{\langle}\omega_0, \omega_1, \omega_2\DM{\rangle}$} with the generators $\omega_i, \ i=0,1,2$ of the form appearing in (\ref{list_plane}).

We will now show that even the plane $\Pb$ is not stable, describing explicitly which linear subspaces of $W$ prevent its stability. 
To start with, we prove the following:
\begin{prop}\label{stab_s1} There is no pair $u, \ U' $ with $u\in W, \ U'<W, \ \DM{\dim}(U')=5$ such that $\D{P}(u\wedge U' )\subset \Pb^{\perp}$.
\end{prop}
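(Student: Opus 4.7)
The plan is to translate the hypothesis $\D{P}(u\wedge U')\subset \Pb^\perp$ into a rank condition on the evaluation map
\[
\phi_u : B \longrightarrow W^*, \qquad \omega \longmapsto \omega(u,\cdot),
\]
and then to use the explicit description of $\pi_g$ recalled just above to reach a contradiction. Assuming such a pair $(u,U')$ exists with $u\neq 0$, the condition $\D{P}(u\wedge U')\subset \Pb^\perp$ says exactly that $\omega(u,u')=0$ for every $\omega\in B$ and every $u'\in U'$. Since $U'$ is a hyperplane of $W$, its annihilator in $W^*$ is a line $\langle\ell\rangle$, and the condition becomes $\omega(u,\cdot)\in\langle\ell\rangle$ for every $\omega\in B$. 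Equivalently, the image of $\phi_u$ has dimension at most one, so by rank–nullity $\dim\ker(\phi_u)\geq 2$. Hence there exist two linearly independent tensors $\omega_a,\omega_b\in B$ with $u\in\ker(\omega_a)\cap\ker(\omega_b)$.

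Next I would exploit the description of $B=\pi_g$ as the family $\omega_L = x\wedge f(y)-y\wedge f(x)$ indexed by $L=\langle x,y\rangle\in \Gr(2,C)$, where $C,D\subset W^*$ are disjoint $3$-dimensional subspaces and $f:C\to D$ is the isomorphism attached to $\pi_g$. Reading the definition directly, a vector $v\in W$ satisfies $\omega_L(v,\cdot)=0$ if and only if every element of $L$ and every element of $f(L)$ annihilates $v$, so
\[
\ker(\omega_L)\;=\;(L\oplus f(L))^{\perp}\;\subset\; W.
\]
Under the isomorphism $\Pb\simeq \Gr(2,C)\simeq \D{P}^2$, the independent $\omega_a,\omega_b$ correspond to distinct $2$-planes $L_a\neq L_b$ in the $3$-dimensional space $C$, which must therefore span $C$; applying $f$ we get $f(L_a)+f(L_b)=D$. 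Hence
\[
(L_a\oplus f(L_a))+(L_b\oplus f(L_b))\;=\;C\oplus D\;=\;W^*,
\]
and passing to orthogonals in $W$ yields $\ker(\omega_a)\cap\ker(\omega_b)=\{0\}$. This forces $u=0$, contradicting the existence of $\D{P}(u\wedge U')$ as a nonempty linear subspace of $\D{P}(\bigwedge^2 W)$.

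The only step that actually requires a check is the identity $\ker(\omega_L)=(L\oplus f(L))^{\perp}$, which is immediate from the expression for $\omega_L$; the remainder of the argument is a dimension count in $W^*$. I do not anticipate any serious obstacle beyond being precise about how $\Pb\simeq\Gr(2,C)$ is used to convert independence of $\omega_a,\omega_b$ into distinctness of the $2$-planes $L_a,L_b$.
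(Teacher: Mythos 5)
Your proof is correct, but it takes a genuinely different route from the paper's. Both arguments begin the same way, by observing that $\D{P}(u\wedge U')\subset\Pb^{\perp}$ forces the evaluation map $\phi_u:B\to W^*$, $\omega\mapsto\omega(u,\cdot)$, to have image of dimension at most one. From there the paper goes into coordinates: it writes $u=\sum_i\alpha_i u_i$, records the three covectors $\omega_i(u,\cdot)$ explicitly, reduces the claim to the assertion that an explicit $3\times 6$ matrix in the $\alpha_i$ never has rank one, and settles that by direct computation. You instead pass to the kernel: $\dim\ker(\phi_u)\ge 2$ produces two independent $\omega_a,\omega_b\in B$ annihilating $u$, and the structural description of a plane of general type via $(C,D,f)$ gives $\ker(\omega_L)=(L\oplus f(L))^{\perp}$, so two distinct $L_a,L_b\in\Gr(2,C)$ span $C$, their associated $4$-dimensional spaces $L\oplus f(L)$ together span $C\oplus D=W^*$, and the kernels meet only in $0$, forcing $u=0$. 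Your version is coordinate-free, replaces the paper's unverified ``we can compute directly that this never happens'' step by a transparent dimension count, and yields as a by-product that the kernels of any two distinct members of $\Pb$ are transverse; the paper's version is more elementary in that it uses only the explicit normal form of $\pi_g$ and linear algebra in the chosen basis. The one point worth making explicit in your write-up is that every nonzero element of $B$ really is of the form $\omega_L$ for some $L\in\Gr(2,C)$ --- this holds because the assignment $x\wedge y\mapsto x\wedge f(y)-y\wedge f(x)$ is induced by a linear map on $\bigwedge^2 C$ and, since $\dim C=3$, every element of $\bigwedge^2 C$ is decomposable --- which is what licenses applying the kernel formula to arbitrary independent elements of $B$.
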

\begin{proof} 
	Let $u_1, \DM{\ldots,} u_6$ be the dual basis to $e_1, \DM{\ldots,} e_6$. 
	As we have already remarked in section \ref{stab_geom}, the existence of a pair $u, \ U'$ as in the statement of the proposition is equivalent to the existence of a vector $u\in W$ such that all linear forms $\omega(u,\cdot )$, with $\omega \in B$, vanish along a 5-dimensional linear space $U'<W$. 
	We prove that this can never \DM{occur}. Indeed, take an arbitrary non-zero vector \mbox{$u\in W, \ u=\sum_{i=1}^6 \alpha_i u_i$}. In the basis $e_1, \DM{\ldots,} e_6$, the linear forms $\omega_0(u, \cdot), \ \omega_1(u,\cdot), \ \omega_2(u,\cdot)$ can be written as:
	$$ \omega_0(u, \cdot)= \alpha_1 e_4+ \alpha_2 e_5 - \alpha_4 e_1-\alpha_5 e_2, \hspace{3mm} \omega_1(u, \cdot)=\alpha_1 e_6+ \alpha_3 e_5 -\alpha_6 e_1-\alpha_5 e_3$$
	$$\omega_2(u, \cdot)= \alpha_2 e_6+ \alpha_4 e_3 - \alpha_6 e_2- \alpha_3 e_4.$$
	
	The intersection of the hyperplanes having equations $\omega_i(u, \cdot)=0, \ i=0,1,2,$ 
	has dimension greater than or equal to five if and only if the linear subspace of $W^*$ generated by $\omega_0(u, \cdot), \omega_1(u,\cdot), \omega_2(u,\cdot)$ has dimension at most one or equivalently, if and \DM{only if} the matrix
	
	$$\begin{pmatrix} -\alpha_4&-\alpha_5&0&\alpha_1&\alpha_2&0\\
	-\alpha_6&0&-\alpha_5&0&\alpha_3&\alpha_1\\
	0&-\alpha_6&\alpha_4&-\alpha_3&0&\alpha_2
	\end{pmatrix}$$
	has rank 1. 
	But we can compute directly that this never happens.

\end{proof}

Because of remark (\ref{2_then_3}), a \DM{n-dimensional} linear space $\Pa\subset \DM{\Pf}$ containing the plane $\Pb$ is thus not stable if and only if there exists a 3-dimensional space $U$ isotropic with respect to any tensor in $A$ (and consequently with respect to any tensor in $B$.) 
Every 3-dimensional \DM{subspace} of $W$ isotropic with respect to any form in $B$ can be characterized as follows:

\begin{lem}\label{plane_isotropic}
	Every 3-dimensional subspace $U$ of $W$ such that $\Pb \subset \D{P}(\bigwedge^2 U)^{\perp}$ might be written as:
	
	$$U=\DM{\langle}\alpha_3 u_3+ \alpha_6 u_6, \alpha_2 u_2+\alpha_4 u_4, \alpha_1u_1+\alpha_5 u_5\DM{\rangle}$$
	
	with $([\alpha_3:\alpha_6], [\alpha_2:\alpha_4], [\alpha_1:\alpha_5])\in \D{P}^1 \times \D{P}^1 \times \D{P}^1$ , satisfying:
	\begin{align}\label{quadr-eq}
	\alpha_2\alpha_5-\alpha_4\alpha_1=&0,\\
	\alpha_3\alpha_5-\alpha_6\alpha_1=&0,\\
	\alpha_6\alpha_2+\alpha_3\alpha_4=&0.
	\end{align}
\end{lem}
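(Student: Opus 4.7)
The plan is to exploit the kernels of the three generators $\omega_0,\omega_1,\omega_2$ of $\pi_g$. Since each $\omega_j$ has rank $4$, its kernel $K_j:=\ker(\omega_j)\subset W$ is $2$-dimensional, and a direct inspection in the fixed basis gives $K_0=\langle u_3,u_6\rangle$, $K_1=\langle u_2,u_4\rangle$, $K_2=\langle u_1,u_5\rangle$. In particular, the three kernels satisfy $W=K_0\oplus K_1\oplus K_2$, and, curiously, each of them is precisely one of the three $2$-dimensional coordinate subspaces appearing in the statement.

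The structural step is a symplectic reduction. For each $j$, $\omega_j$ descends to a nondegenerate alternating form on the $4$-dimensional quotient $W/K_j$, whose totally isotropic subspaces have dimension at most $2$. If $U$ is isotropic for $\omega_j$, then the image of $U$ in $W/K_j$ is isotropic as well, and so $\dim(U\cap K_j)\geq \dim U-2=1$. Combined with the direct sum decomposition $W=K_0\oplus K_1\oplus K_2$ and the fact that $\dim U=3$, this forces $\dim(U\cap K_j)=1$ for every $j$ and $U=\bigoplus_j(U\cap K_j)$. Choosing a generator of each summand yields $U=\langle w_0,w_1,w_2\rangle$ with $w_0=\alpha_3 u_3+\alpha_6 u_6$, $w_1=\alpha_2 u_2+\alpha_4 u_4$, $w_2=\alpha_1 u_1+\alpha_5 u_5$, each determined up to a scalar, i.e.\ by a point of $\D{P}^1\times\D{P}^1\times\D{P}^1$.

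It then remains to impose the residual isotropy conditions $\omega_j(w_k,w_l)=0$ for $j=0,1,2$ and $0\leq k<l\leq 2$. Six of these nine equations vanish automatically, because $w_j\in K_j=\ker\omega_j$ already gives $\omega_j(w_j,w_k)=0$ for every $k\neq j$. The three remaining pairings are computed directly and yield $\omega_0(w_1,w_2)=\alpha_2\alpha_5-\alpha_4\alpha_1$, $\omega_1(w_0,w_2)=\alpha_3\alpha_5-\alpha_6\alpha_1$, and $\omega_2(w_0,w_1)=-(\alpha_3\alpha_4+\alpha_6\alpha_2)$. Setting these equal to zero gives exactly the three quadratic relations of the statement.

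The one step that really needs care is the symplectic-reduction argument guaranteeing $\dim(U\cap K_j)\geq 1$; once this is in place, the direct-sum decomposition of $U$ is forced by dimension count, and the final identification of the quadratic relations reduces to the three short bilinear computations above.
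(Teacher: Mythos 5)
Your proof is correct and follows essentially the same route as the paper's: show that $U$ must meet each of the three kernels $\ker(\omega_j)$, use the decomposition $W=\ker(\omega_0)\oplus\ker(\omega_1)\oplus\ker(\omega_2)$ to put $U$ in the stated form, and then impose the three residual isotropy conditions $\omega_j(w_k,w_l)=0$. The only (cosmetic) differences are that you justify $\dim(U\cap\ker\omega_j)\ge 1$ by passing to the nondegenerate form on $W/\ker(\omega_j)$ rather than by the paper's contradiction with a $5$-dimensional isotropic subspace, and that you compute the three surviving pairings directly instead of via the trilinear embedding $\Upsilon$; both computations agree with equations (2)--(4).
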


\begin{proof}Denote by $\omega_0, \omega_1, \omega_2$ the 3 generators of $\Pb$ appearing in (\ref{list_plane}) and let $U<W$ be a 3-dimensional linear space isotropic to each $\omega_i, \ i=0,1,2$.
	The first thing that we can deduce is that \DM{every such} linear space $U$ is necessarily spanned by 3 independent vectors in $W$, $k_1,k_2,k_3$ with $k_i\in \DM{\ker}(\omega_i)$.
	This is due to the fact that given a rank 4 tensor $\omega$, if $U$ is a 3-dimensional linear space isotropic with respect to it, then \mbox{$U\cap \DM{\ker}(\omega)\ne 0$.} Indeed, if by contradiction $\ker(\omega)\cap U=\emptyset$, we would have that the restriction of $\omega$ to $\langle U,\ker(\omega)\rangle$, a 5-dimensional vector space, is zero; but this is clearly not possible since $\rk(\omega)=4$.
	Therefore $U\cap \DM{\ker}(\omega_i)\ne 0 \ \forall\: \omega_i, \ i=1,2,3$.
	It's easy to compute directly that $\langle \ker(\omega_0),\ker(\omega_1),\ker(\omega_2)\rangle=W$, hence
	 we conclude that we can find a 3-uple of vectors in $W$, $k_0,\ k_1,\ k_2$ with $k_i, \in \DM{\ker}(\omega_i)$ spanning the space $U$. 
	For our choice of coordinates, we have $\DM{\ker}(\omega_0)=\DM{\langle}u_3,u_6\DM{\rangle}, \ \DM{\ker}(\omega_1)=\DM{\langle}u_2,u_4\DM{\rangle}$, \mbox{$ \DM{\ker}(\omega_2)=\DM{\langle}u_1,u_5\DM{\rangle}$}, hence 3-dimensional spaces isotropic with respect to any tensor in $\Pb$ belong to the family of linear spaces of the form: 
	$$U=\DM{\langle}\alpha_3 u_3+ \alpha_6 u_6, \alpha_2 u_2+\alpha_4 u_4, \alpha_1u_1+\alpha_5 u_5\DM{\rangle}$$
	
	with $([\alpha_3:\alpha_6], [\alpha_2:\alpha_4], [\alpha_1:\alpha_5])\in \D{P}^1 \times \D{P}^1 \times \D{P}^1.$

	Imposing the conditions $\omega_i\restriction_U\equiv 0$, we get the 3 equations (2),(3), (4).
	To see this we embed $\D{P}(\ker(\omega_1))\times \D{P}(\ker(\omega_2))\times \D{P}(\ker(\omega_1))\simeq \D{P}^1\times \D{P}^1 \times \D{P}^1$ in $\D{P}^{11}\subset \Pw$ by means of the morphism:
	\begin{align*}
	\D{P}(\ker(\omega_0)) \times \D{P}(\ker(\omega_1)) \times \D{P}(\DM\ker (\omega_2))&\overset{\Upsilon}\longrightarrow \D{P}(\bigwedge^2 W)\\
	(k_1,k_2,k_3)& \mapsto  k_1\wedge k_2+k_1\wedge k_3+ k_2\wedge k_3
	\end{align*}

	 ($\Upsilon$ is the morphism defined by the linear system of divisors of type $(1,1,0), \ (1,0,1), \ (0,1,1)$).
	 A linear space $U=\langle k_1, k_2, k_3\rangle$ generated by vectors $k_1=\alpha_3 u_3+ \alpha_6 u_6,\ k_2=\alpha_2 u_2+\alpha_4 u_4 $, \mbox{$ k_3=\alpha_1u_1+\alpha_5 u_5$}, 
	 satisfies $\omega_i\restriction_{U}\equiv 0, \ i=0,1,2,$ if and only if $\Upsilon(k_1, k_2, k_3)$ annihilates each $\omega_i$.
	 In other words 
	the conditions $\omega_i \in \D{P}(\bigwedge^2 U)^{\perp}, i=0,1,2$ define 3 hyperplane sections of $\D{P}^{11}$  that restricted to $\im(\Upsilon)$ give the equations (quadratic in the $\alpha_i$s ) appearing in the statement.
\end{proof}

From this result we deduce:

\begin{cor}\label{unst_le2} There exist no stable linear systems of generic rank 4 and of dimension less than or equal to 2.
\end{cor}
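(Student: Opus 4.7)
The plan is to combine Proposition \ref{codim_int} with the Manivel-Mezzetti classification of constant-rank-$4$ planes, reducing the problem to a short list of cases that we then refute.

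First, if $\Pa \subset \Pf$ is stable and $\dim \Pa = n \le 2$, Proposition \ref{codim_int} forces either $\Pa \cap \Gr(2, W^*) = \emptyset$ or $\codim_{\Pa}(\Pa \cap \Gr(2, W^*)) \ge 3$; the latter is incompatible with $n \le 2$, so $\Pa$ has constant rank $4$.

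For the central case $n = 2$, the Manivel-Mezzetti classification (\ref{list_plane}) puts $\Pa$ into the $PGL(W)$-orbit of one of $\pi_g, \pi_t, \pi_p, \pi_5$. Proposition \ref{plane_unst} (together with Remark \ref{plane_not_stab}) already rules out the orbits of $\pi_t, \pi_p, \pi_5$. For the one remaining case, Proposition \ref{stab_s1} eliminates $s=1$ destabilizers of $\pi_g$ and Remark \ref{2_then_3} reduces $s=2$ destabilizers to $s=3$ ones, so it suffices to exhibit a $3$-dimensional subspace $U \subset W$ isotropic for every form in $\pi_g$. Lemma \ref{plane_isotropic} parametrizes such $U$ by triples of projective coordinates satisfying three quadrics; the choice $\alpha_1 = \alpha_2 = \alpha_3 = 1$, $\alpha_4 = \alpha_5 = \alpha_6 = 0$ is an obvious solution, producing $U = \langle u_1, u_2, u_3\rangle$. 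A direct substitution in (\ref{list_plane}) confirms that each of the three generators of $\pi_g$ annihilates $U \times U$. Hence $\pi_g$ itself is not stable, contradicting the assumed stability of $\Pa$.

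For the remaining cases $n \le 1$, the same first step forces constant rank $4$. My plan is to embed any such $\Pa$ into a $2$-dimensional constant-rank-$4$ linear space $\Pa'$, and then apply Corollary \ref{unst_sub} together with the $n = 2$ analysis above. For $n = 0$ this is immediate: any rank-$4$ tensor is $PGL(W)$-equivalent to a generator of $\pi_g$ and hence lies in some $PGL(W)$-translate of $\pi_g$. For $n = 1$, this is the main obstacle of the argument: one must show that every pencil of constant rank $4$ sits inside a constant-rank-$4$ plane, which should follow from a short Kronecker-type case analysis of constant-rank-$4$ pencils of skew-symmetric $6 \times 6$ matrices. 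As an alternative to the embedding approach, the $n = 1$ case can be treated directly by the Wall criterion: either the two kernels share a common vector, yielding an $s=1$ destabilizer, or else the kernel of either generator, paired with a suitable $4$-dimensional extension, supplies an $s=2$ destabilizer.
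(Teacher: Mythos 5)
Your argument follows the paper's route almost exactly: Proposition \ref{codim_int} forces constant rank $4$, the Manivel--Mezzetti list reduces $n=2$ to four orbits, Proposition \ref{plane_unst} kills $\pi_t,\pi_p,\pi_5$, and an isotropic $3$-space from Lemma \ref{plane_isotropic} kills $\pi_g$ (your explicit solution $U=\langle u_1,u_2,u_3\rangle$ is correct and is a useful concretization of what the paper leaves implicit in that lemma). The one place you diverge is the treatment of $n\le 1$, and your worry there is legitimate: the paper simply invokes Corollary \ref{unst_sub}, which silently presupposes that every point or pencil of constant rank $4$ sits inside a constant-rank-$4$ plane, a fact neither you nor the paper establishes. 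Your Wall-criterion fallback does close this, but as stated it has a small hole: for the $s=2$ destabilizer $U=\ker(\omega_0)\subset U'$ you need $U$ to be isotropic for $\omega_1$ as well (Theorem \ref{GIT} with $s=2$ requires $m_{12}^k=0$, i.e.\ $\omega(U,U)=0$ for all $\omega$ in the pencil), and this is not automatic for an arbitrary $2$-plane. It does hold here because the line $\overline{\omega_0\omega_1}$ lies in $\Pf$ and $\omega_0$ is a smooth point of $\Pf$, so $\omega_1\in\D{T}_{\omega_0}\Pf=\gamma(\omega_0)^{\perp}$, which is precisely the condition $\omega_1(k_1,k_2)=0$ for $\ker(\omega_0)=\langle k_1,k_2\rangle$; once you add this observation (it is Lemma \ref{lemma_gamma} in miniature), the $s=2$ destabilizer exists whether or not the two kernels meet, so your case split is not even needed. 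With that supplement your proof is complete and, for $n\le1$, arguably more self-contained than the paper's.
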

\begin{proof} As we have already remarked, from proposition \ref{codim_int} we know that a stable linear space $\Pa \subset \Pf$ having dimension $\le 2$ should necessarily have constant rank equal to 4. But from proposition \ref{plane_unst}
	and lemma \ref{plane_isotropic} we get that no plane of constant rank 4 is stable, therefore by corollary \ref{unst_sub} the same hold for any linear space of constant rank 4 and of dimension $<2$.
	\end{proof}

From what we have proved until now we see that a stable $\Pa\subset \Pf, \ \Pa \simeq \D{P}^n$ can be generated as
$\Pa=\langle \Pb, \omega_3, \ldots, \omega_{n}\rangle$ with $\Pb$ a plane of general type and $\omega_i$ belonging to $\Gr(2,W^*)$ for $i=3, \ldots n$.
We will now show that in order to effectively get an inclusion $\Pa \subset \Pf$, the tensors $\omega_i, \ i=3,\ldots n,$ must belong to a 3-dimensional rational scroll $Z$. 

\paragraph{Construction of the scroll \textbf{$Z\simeq S_{(2,2,2)}$}}

\

Let $\Pa=\langle \Pb, \omega_3, \dots, \omega_{n}\rangle$ be a linear space as described above. 
We denote by $\Lambda$ the 8-dimensional linear space defined by $\Lambda:=\langle\gamma(\Pb)\rangle^{\perp}$. By lemma \ref{lemma_gamma} we get an inclusion  $\Pa \subset \Lambda$ that imposes that each $\omega_i, \ i=3,\dots n$ is a rank 2 tensor belonging to $\Lambda$. We study then the intersection \mbox{$\DM{\Gr}(2,W^*)\cap \Lambda $.} 
Since $\Pb$ is a plane of general type, it is uniquely determined by a triple $f,C,D$, with $C,\ D$ \DM{disjoint subspaces} of $W^*$ of dimension 3 and $f$ an isomorphism $f:C\xrightarrow{\sim}D$. Every $\omega \in \Pb$ has the form:
$$ \omega= x\wedge f(y) -y\wedge f(x), \hspace{3mm} \exists \ x,y \in C$$
and this defines an isomorphism $\rho$:
\begin{align*}
\rho : \Pb\overset{\sim}{ \longrightarrow}& \DM{\Gr}(2,C)\\
x\wedge f(y)-y\wedge f(x)\mapsto & x\wedge y
\end{align*}
Consider now the morphism:
\begin{align*}
\psi': \DM{\Gr}(2,C) \times \D{P}^1 \longrightarrow & \DM{\Gr}(2,W^*)\\
(x\wedge y, [t_0:t_1]) \mapsto & (t_0(x)+t_1(f(x)))\wedge (t_0(y)+t_1(f(y))),
\end{align*}
and consequently the map:
\begin{align*}
\psi: \Pb\times \D{P}^1 \longrightarrow & \DM{\Gr}(2,W^*)\\
(\omega, [t_0:t_1]) \mapsto & \psi'((\rho(\omega), [t_0:t_1])); 
\end{align*}
denote by $Z$ the variety $Im(\psi)=Im(\psi')$.

We see from the definition of the maps $\psi$ and $\psi'$ that $Z$ is a rational normal scroll $S_{(2,2,2)}$, a degree 6 variety of dimension 3, non-degenerate in $ \Lambda$ (hence a minimal variety of dimension 3 in $\Lambda$).
$Z$ has the structure of a conic fibration over $\Pb$, 
$$Z\xrightarrow{\pi} \D{P}(B), \hspace{3mm} \mathfrak{C}_{\omega}:=\pi^{-1}(\omega)=\psi(\{\omega\}\times \D{P}^1).$$
Note that if $\rho(\omega)=x\wedge y$, the conic $\mathfrak{C}_{\omega}$ is the locus: $$\mathfrak{C}_{\omega}=\pi^{-1}(\omega)=t_0^2 (x\wedge y) + t_0t_1 (x\wedge f(y) +f(x)\wedge y) + t_1^2 (f(x)\wedge f(y)).$$
$Z$ can also be described as a family of planes over $\D{P}^1$.
Consider indeed the three conics \mbox{$\mathfrak{C}_{\omega_i}\subset  \Lambda$}, $i=0,1,2$; these are 3 conics lying in 3 disjoint planes and moreover given any point $\omega \in \mathfrak{C}_{\omega_i}, \ \D{T}_{\omega}\DM{\Gr}(2,W^*)\cap \mathfrak{C}_{\omega_j}, \ i\ne j$ consists of just one point.
This means that once we have fixed an isomorphism $\phi_0:\D{P}^1\to \mathfrak{C}_{\omega_0}$, we \DM{uniquely determine} isomorphisms $\phi_i:\D{P}^1\to \mathfrak{C}_{\omega_i}, \ i=1,2$ sending a point $p\in \D{P}^1$ to $\D{T}_{\phi_0(p)}\DM{\Gr}(2,W^*)\cap \mathfrak{C}_{\omega_i}$.
$Z\simeq S_{(2,2,2)}$ is thus the rational scroll obtained as:
$$ Z= \bigcup_{p\in \D{P}^1} \langle\phi_0(p), \phi_1(p), \phi_2(p)\rangle, \hspace{5mm}\langle\phi_0(p), \phi_1(p), \phi_2(p)\rangle=\psi(\D{P}(B)\times \{p\}).$$

\

As every point in the Veronese surface $\gamma(\Pb)$ is of the form $u\wedge f^T(u)$, where $f^T$, \mbox{$f^T:D^*\to C^*$} is the transpose of $f$, we see that by construction we have an inclusion \mbox{$Z\subset \DM{\Gr}(2, W^*)\cap \Lambda$.}
In order to verify that $Z$ is effectively equal to $\DM{\Gr}(2,W^*)\cap \Lambda$, we first observe that, as $Z\simeq S_{(2,2,2)}$, $Z$ is the base locus of a linear system of quadrics on $\Lambda$ having dimension 
$h^0(\mathcal{I}_{Z/\Lambda}(2))-1=14$. 

Consider now $\mathcal{Q}\simeq \bigwedge^4 W$ the linear system of Pl\"ucker's quadrics on $\Pw$. The variety $\Lambda\cap \DM{\Gr}(2,W^*)$ is the base locus of $|\mathcal{Q}|_{\Lambda}|$ and the inclusion $Z\subset \DM{\Gr}(2,W^*)\cap \Lambda$ implies that 
$|\mathcal{Q}|_{\Lambda}|\subset\D{P}(H^0(\mathcal{I}_{Z/\Lambda}(2)))$.
We compute then the dimension of $|\mathcal{Q}|_{\Lambda}|$: taking a 9-tuple of points \mbox{$\theta_i,\ i=0, \DM{\ldots,} 8$} spanning $\Lambda$ and choosing linear coordinates $X_0, \DM{\ldots,} X_8$ on $\Lambda$, points in \mbox{$\Lambda\cap {\Gr}(2,W^*)$} are defined by:

$$\sum_{i=0}^{8} X_i \theta_i \in \Lambda\cap \DM{\Gr}(2,W^*)\Longleftrightarrow \wedge^2 (\sum_{i=0}^{8} X_i \theta_i)=0.$$

$\wedge^2 (\sum_{i=0}^{8} X_i \theta_i)$ \DM{gives} an element in $H^0(\mathcal{O}_{\Lambda}(2)) \otimes \bigwedge ^4 W^*\simeq H^0(\mathcal{O}_{\Lambda}(2)) \otimes \bigwedge ^2 W $
that can thus be written as $\sum_{1\le i <j\le 6} Q_{ij} (u_i\wedge u_j)$, with $Q_{ij}\in  H^0(\mathcal{O}_{\Lambda}(2))$. 
$|\mathcal{Q}|_{\Lambda}|$ is the linear system generated by the quadrics $Q_{ij}$.
Writing $\D{P}(B)$ in the form appearing in (\ref{list_plane}), we can write down explicitly generators of $\Lambda$ and compute that $\DM{\dim}(|\mathcal{Q}|_{\Lambda}|)=14$. Hence $|\mathcal{Q}|_{\Lambda}|=\D{P}(H^0(\mathcal{I}_{Z/\Lambda}(2))$) from which we deduce that \mbox{$Z=\Lambda\cap \DM{\Gr}(2,W^*)$}.

	\subsection{Proof of theorem \ref{dim_le3} }
Using the results obtained so far it is now easy to prove Theorem \ref{dim_le3}:
\begin{proof}[Proof of Theorem \ref{dim_le3}]
	\begin{enumerate}
		\item The inequality $n \ge 3$ is simply corollary \ref{unst_le2}. 
		\item From (1), propositions \ref{cor_MM} and \ref{codim_int}, $\Pa\cap \Gr(2,W^*)$ has codimension $3$ hence a general plane $\Pb\subset \Pa$ is disjoint from $\Gr(2,W^*)$.
		\item This follows from proposition \ref{plane_unst}.
		\item From the previous points $\Pa$ always contains a plane of general type $\Pb$ and we saw in last section that the inclusion $\Pa \subset \Pf$ implies that $\Pa\cap \Gr(2,W^*)$ is entirely contained in the scroll $Z$ constructed from $\Pb$. As the stability imposes $\codim(\Pa \cap \Gr(2,W^*))=3$ and as $Z$ is an irreducible variety of dimension 3, we see that we must have $n\le 6$. If ever $n=6$ we would get that \mbox{$\dim(\Pa \cap \Gr(2,W^*))=3$}, so that $\Pa \cap \Gr(2,W^*)=Z$. This is not possible since $\Pa \cap \Gr(2,W^*)$ spans a linear subspace of $\Pa$ but $\langle Z \rangle\simeq \D{P}^8$.
				\end{enumerate}
\end{proof}

\section{Stable linear systems of dimension 3}
In this section we classify 3-dimensional stable linear systems of generic rank 4. We prove the following:
	\begin{thm}\label{dim_3}
		Let $\Pa \subset \Pf$ be a stable three dimensional linear space. Then $\Pa$ is $SL(W)$-equivalent to the space:
		$$\langle e_1\wedge e_4 + e_2\wedge e_5, e_1 \wedge e_6 + e_3\wedge e_5, e_2 \wedge e_6 - e_3 \wedge e_4, (e_1 -e_5) \wedge (e_2 +e_4)\rangle$$
 and $\Pa \cap \Gr(2,W^*)$ consists of two points. 
	\end{thm}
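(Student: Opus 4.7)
By Theorem~\ref{dim_le3}, a stable $\Pa\subset\Pf$ of dimension $3$ contains a plane of general type and satisfies $\codim_{\Pa}(\Pa\cap\Gr(2,W^*))=3$, so the intersection is a nonempty zero-dimensional scheme. The plan is to pick any $\omega_3\in\Pa\cap\Gr(2,W^*)$ and any plane $\Pb\subset\Pa$ with $\omega_3\notin\Pb$ and $\Pb\cap\Gr(2,W^*)=\emptyset$; after applying an element of $SL(W)$ I assume $\Pb=\pi_g$ as in (\ref{list_plane}). The inclusion $\Pa\subset\Pf$ and the construction of $Z$ force $\omega_3\in Z=\Lambda\cap\Gr(2,W^*)$, so I parametrize $\omega_3=\psi(\omega,[t_0:t_1])$ with $\omega=x\wedge f(y)-y\wedge f(x)\in\pi_g$ and $[t_0:t_1]\in\D{P}^1$; then $L_{\omega_3}=\langle t_0 x+t_1 f(x),\,t_0 y+t_1 f(y)\rangle$.

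The heart of the argument is the stability analysis via the geometric criterion of Section~\ref{stab_geom}. Since $\pi_g\subset\Pa$, any destabilizing pair $(U,U')$ for $\Pa$ is also destabilizing for $\pi_g$, and Proposition~\ref{stab_s1} rules out $s=1$. The $s=2$ case reduces to $s=3$ with $\dim U=3$ by Remark~\ref{2_then_3}, and $s=3$ with $\dim U=4$ is impossible because a plane of general type is not contained in any tangent space $\D{T}_{\theta}\Gr(2,W^*)$. So only the case $s=3$, $\dim U=3$ matters. Solving the three bilinear equations of Lemma~\ref{plane_isotropic} in $(\D{P}^1)^3$ shows that the only solutions are $[a_3:a_6]=[a_2:a_4]=[a_1:a_5]=[1:0]$ or $[0:1]$, giving exactly two isotropic $3$-spaces $U_1=\langle u_1,u_2,u_3\rangle$ and $U_2=\langle u_4,u_5,u_6\rangle$, with orthogonals $C$ and $D$ in $W^*$.

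A decomposable tensor $\omega_3=k\wedge k'$ vanishes on $\bigwedge^2 U_i$ if and only if $L_{\omega_3}\cap U_i^{\perp}\neq 0$. Since $\langle x,y\rangle\subset C$ and $\langle f(x),f(y)\rangle\subset D$, an immediate computation gives $L_{\omega_3}\cap C\neq 0\Leftrightarrow t_1=0$ and $L_{\omega_3}\cap D\neq 0\Leftrightarrow t_0=0$; thus $\Pa$ is stable if and only if $t_0 t_1\neq 0$, which is equivalent to the condition $\omega\notin\D{T}_{\omega_3}\Gr(2,W^*)$ announced in the introduction. To show that all such $\Pa$ form a single $SL(W)$-orbit I use two sets of elements of $\Stab_{SL(W)}(\pi_g)$: the subgroup $SL(C)$, acting on $D$ via conjugation by $f$, preserves each generator $\omega_i$ and acts transitively on $\pi_g\simeq\Gr(2,C)$, while the toral transformation $u_i\mapsto\mu u_i$ for $i\le 3$, $u_i\mapsto\mu^{-1}u_i$ for $i\ge 4$ preserves $\pi_g$ and rescales the fiber coordinate by $[t_0:t_1]\mapsto[t_0:\mu^2 t_1]$. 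Combined they act transitively on the stable locus $\{t_0 t_1\neq 0\}\subset Z$.

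Choosing the representative $\omega=\omega_0$ and $[t_0:t_1]=[1:1]$ gives $\omega_3=(e_1-e_5)\wedge(e_2+e_4)$ and hence the linear space in the statement. It remains to compute $\Pa\cap\Gr(2,W^*)=\Pa\cap Z$ for this representative: writing $M=\alpha\omega_0+\beta\omega_1+\gamma\omega_2+\delta\omega_3$ and imposing $M\wedge M=0$ in $\bigwedge^4 W^*$, the coefficients of $e_1\wedge e_3\wedge e_5\wedge e_6$ and $e_2\wedge e_3\wedge e_4\wedge e_6$ force $\beta=\gamma=0$, and the coefficient of $e_1\wedge e_2\wedge e_4\wedge e_5$ then yields $\delta^2=(\alpha+\delta)^2$, so $\alpha=0$ or $\alpha=-2\delta$; these give the two distinct points $\omega_3=(e_1-e_5)\wedge(e_2+e_4)$ and $(e_1+e_5)\wedge(e_2-e_4)$. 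I expect the main obstacle to be the stability analysis, specifically the combined task of identifying the two isotropic $3$-subspaces of $\pi_g$ and translating non-isotropy with respect to them into the clean condition $t_0 t_1\neq 0$; once this geometric picture is in place both the single-orbit statement and the counting of intersection points reduce to short explicit computations.
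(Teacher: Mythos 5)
Your proof is correct and follows essentially the same route as the paper's: decompose $\Pa=\langle \Pb,\omega_3\rangle$ with $\Pb$ of general type and $\omega_3=\psi(\omega,[t_0:t_1])\in Z$, reduce non-stability to the existence of a $3$-dimensional isotropic subspace via Proposition \ref{stab_s1}, Remark \ref{2_then_3} and Lemma \ref{plane_isotropic}, and conclude that stability holds exactly when $t_0t_1\neq 0$ (the paper reaches the same dichotomy by adjoining the fourth quadratic equation $\omega_3\restriction_U\equiv 0$ to the system, whereas you first solve the three equations to exhibit the two isotropic spaces $U_1,U_2$ and then test $\omega_3$ against them --- an equivalent and slightly cleaner packaging). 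Your write-up is in fact more complete than the paper's in two respects --- the explicit $\Stab(\pi_g)$-transitivity argument on $\{t_0t_1\neq0\}\subset Z$, which the paper dismisses as ``clear'', and the direct $M\wedge M=0$ computation of the two intersection points --- but note the small wording slip that the $SL(C)$-action \emph{preserves the plane} $\pi_g$ rather than ``each generator $\omega_i$'' (it could not otherwise act transitively on $\pi_g$).
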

\begin{proof}Let $\Pa$ be a linear space satisfying the hypotheses of the theorem. Applying theorem \ref{dim_le3}, we get that $\Pa\cap \Gr(2,W^*)$ must have dimension zero and that $\Pa$ might be spanned by a plane $\Pb$ of constant rank 4 of general type and by a point $\omega_3\in \Gr(2,W^*)$. Moreover we know that the inclusion $\Pa \subset \Pf$ implies that $\omega_3$ belongs to the scroll $Z\simeq S_{(2,2,2)}$ constructed in the previous section. Keeping the notations formerly adopted, we call $\pi: Z\to \Pb$ the conic fibration over $\Pb$; from the definition of $Z$, there exists then a unique point $\omega \in \Pb$ 
	such that $\omega_3 \in \pi^{-1}(\omega)$. We choose as usual a basis $e_1, \ldots, e_6$ of $W^*$ so that $\Pb=\langle\omega_0,\omega_1,\omega_2\rangle$ with $\omega_0, \omega_1 \omega_2$ as in \ref{list_plane}; up to an appropriate change of coordinates, we can suppose that $\omega_3 \in \pi^{-1}(\omega_0)$ and therefore write $\omega_3$ as:
	$$\omega_3= (t_0 e_1 -t_1 e_5) \wedge (t_0 e_2 + t_1 e_4)= t_0^2(e_1\wedge e_2)+t_0t_1(e_1\wedge e_4+e_2\wedge e_5)+t_1^2(e_4\wedge e_5), \ \hspace{2mm} [t_0:t_1]\in \D{P}^1.$$
	We study now how to choose $[t_0:t_1]\in \D{P}^1$ in order to get stability. From proposition \ref{stab_s1} and remark \ref{2_then_3}, $\Pa$ is not stable if and only if there exists $U\subset W$ of dimension 3 such that $\Gr(2,U)\subset 	\Pa^{\perp}$. Fixing a basis $u_1,\ldots,u_6$ dual to $e_1,\ldots, e_6$, such a space $U$ must then be of the form (prop. \ref{plane_isotropic}) 

	$$U=\DM{\langle}\alpha_3 u_3+ \alpha_6 u_6, \alpha_2 u_2+\alpha_4 u_4, \alpha_1u_1+\alpha_5 u_5\DM{\rangle}$$
	
	with $([\alpha_3:\alpha_6], [\alpha_2:\alpha_4], [\alpha_1:\alpha_5])\in \D{P}^1 \times \D{P}^1 \times \D{P}^1$ , satisfying:
\begin{center}
	\begin{align}\label{quadr-eq}
	\alpha_2\alpha_5-\alpha_4\alpha_1=&0,\\
	\alpha_3\alpha_5-\alpha_6\alpha_1=&0,\\
	\alpha_6\alpha_2+\alpha_3\alpha_4=&0\\
	t_1^2(\alpha_4\alpha_5)+t_0t_1(\alpha_2\alpha_5-\alpha_1\alpha_4)-t_0^2(\alpha_1\alpha_2)=&0;
	\end{align}
\end{center}
	where the last equation arises imposing $\omega_3 \restriction_ U \equiv 0$.
	This system admits solution (a unique to be more precise) if and only if $t_0=0$ or $t_1=0$ namely if and only if $\omega_3$ is one of the two points $\omega \in\mathfrak{C}_{\omega_0}$ such that $\omega_0 \in \D{T}_{\omega}\mathfrak{C}_{\omega_0}$. We can also observe that if ever $t_0=0$ or $t_1=0$, we would get that $\Pa\cap \Gr(2,W^*)$ consists of the point $\omega_3$ counted with multiplicity 2 and this prevents $\Pa$ from being stable as proved in proposition $\ref{codim_int}$.  
	Hence we have that: 
	$$\Pa=\langle \Pb, e_1 - te_5\wedge e_2 + t e_4\rangle, \ t\in \D{C}^*$$
	and it is then clear that $\Pa$ belongs to the $SL(W)$-orbit of the space appearing in the statement of the theorem.  
\end{proof}
\section{Stable linear systems of dimension 4}
We pass now to the classification of stable linear spaces of generic rank 4 and of dimension 4. We prove the following:
	\begin{thm}\label{dim_4}
			Let $\Pa$ be a stable 4-dimensional linear system of \DM{skew-symmetric form}s of generic rank less then or equal to four. Then $\Pa$ is $SL(W)$\DM{-equivalent} to one of the following two spaces:
		$$ \DM{\langle}e_1\wedge e_4 + e_2\wedge e_5, \ e_1 \wedge e_6+ e_3\wedge e_5, \ e_2\wedge e_6 - e_3\wedge e_4, \ e_1\wedge e_2, \ e_4\wedge e_5\DM{\rangle}$$
		or:
		$$ \DM{\langle}e_1\wedge e_4 + e_2\wedge e_5, \ e_1 \wedge e_6+ e_3\wedge e_5, \ e_2\wedge e_6 - e_3\wedge e_4, \ (e_1-e_5)\wedge (e_2+e_4), \ (e_1-e_5)\wedge (e_3+e_6)\DM{\rangle}.$$
		Furthermore in the first case $\Pa$ meets the Grassmannian $\DM{\Gr}(2,W^*)$ along a smooth conic isomorphic to $\D{P}^2\cap \DM{\Gr}(2,4)$; in the second case $\Pa$ intersects $\DM{\Gr}(2,W^*)$ along a pair of disjoint lines.  
		\end{thm}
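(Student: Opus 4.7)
The plan is to apply Theorem \ref{dim_le3} to reduce to the case $\Pb = \pi_g \subset \Pa$ after an $SL(W)$-change of basis; Proposition \ref{codim_int} then gives that the curve $X := \Pa \cap \Gr(2, W^*)$ is a one-dimensional subvariety of the scroll $Z \simeq S_{(2,2,2)}$ constructed in Section~3, with smooth top-dimensional components. Since $\Pa = \langle \Pb, \omega_3, \omega_4\rangle$ for some $\omega_3, \omega_4 \in X$, the remainder of the proof is a geometric case analysis on how the pair $(\omega_3, \omega_4)$ sits in $Z$ with respect to its two rulings, combined with $SL(W)$-normalization using the stabilizer of $\Pb$.

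The constraint $\Pa \subset \Pf$ translates, via the sextic in equation (\ref{sextic}), to $\Pa \subset D_{\omega_i} = Q_{\omega_i} \cap \Pf$ for $i = 3, 4$, since $\omega_i \in \Gr(2, W^*)$ is singular in $\Pf$ so that $\D{T}_{\omega_i}\Pf = \Pw$. Combining this with the two rulings of $Z$, namely the conic fibration $\pi: Z \to \Pb$ (fibers $\mathfrak{C}_\omega$) and the plane fibration $\pi': Z \to \D{P}^1$ (fibers $F_p \simeq \Gr(2, V_p)$), yields two possibilities. \emph{Case A:} $\omega_3$ and $\omega_4$ lie in a common conic fiber $\mathfrak{C}_\omega$ of $\pi$. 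Then $\langle \mathfrak{C}_\omega\rangle \subset \Pa$, so $X = \mathfrak{C}_\omega$ is a smooth plane conic realized as $\D{P}^2 \cap \Gr(2, L_\omega^\perp) \simeq \D{P}^2 \cap \Gr(2,4)$; after normalizing $\omega = \omega_0$ via $SL(W)$, this yields the first standard form $\Pa = \langle \omega_0, \omega_1, \omega_2, e_1 \wedge e_2, e_4 \wedge e_5\rangle$. \emph{Case B:} $\omega_3, \omega_4$ lie in a common plane fiber $F_p = \Gr(2, V_p)$ of $\pi'$, in distinct conic fibers. The line $\overline{\omega_3 \omega_4}$ is then a pencil in $\Gr(2, V_p)$ through a common vector $v \in V_p$; normalizing $p = [1:1]$ so that $V_p = \langle e_1 - e_5, e_2 + e_4, e_3 + e_6\rangle$ and $v = e_1 - e_5$ via the residual $SL(W)$-stabilizer of $\Pb$, one obtains $\omega_3 = (e_1 - e_5)\wedge(e_2 + e_4)$ and $\omega_4 = (e_1 - e_5)\wedge(e_3 + e_6)$, which is the second standard form. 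A direct computation of the $4 \times 4$ Pfaffians of $M_A$ then identifies $X$ as the union of the pencil $\overline{\omega_3 \omega_4}$ (sharing $e_1 - e_5$) and a second, disjoint pencil $\{(e_1 + e_5) \wedge (\alpha(e_4 - e_2) + \beta(e_6 - e_3))\}$ sharing the vector $e_1 + e_5$.

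Finally, the stability of each candidate $\Pa$ is verified using the geometric criterion of Section \ref{stab_geom} together with Lemma \ref{plane_isotropic}: one checks that no $3$-dimensional $U \subset W$ satisfies $\Gr(2, U) \subset \Pa^\perp$, by verifying that the isotropy equations from Lemma \ref{plane_isotropic} combined with the two additional isotropy conditions imposed by $\omega_3$ and $\omega_4$ admit no common solution. The main obstacle lies in establishing the exhaustiveness of the case split in paragraph two: one must rigorously show that a pair $(\omega_3, \omega_4) \in Z \times Z$ satisfying $\langle \Pb, \omega_3, \omega_4 \rangle \subset \Pf$ cannot lie in ``general position'' on $Z$, outside both the common-conic-fiber and common-plane-fiber configurations. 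This requires a careful coordinate analysis of the Plücker quadric $Q_{\omega_4}$ and the incidence $\langle \Pb, \omega_3\rangle \subset Q_{\omega_4}$ for an arbitrary $\omega_4 \in Z$, which is the technically demanding step of the proof.
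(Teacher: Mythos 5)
Your reduction (stable $\Rightarrow$ $\Pa=\langle \Pb,\omega_3,\omega_4\rangle$ with $\Pb$ of general type and $\omega_3,\omega_4$ on the scroll $Z$), your two terminal configurations, and your stability check all match the paper. But the step you defer at the end --- ruling out a pair $(\omega_3,\omega_4)$ in ``general position'' on $Z$ --- is not a technical afterthought; it is the entire content of the classification, and as written your proposal does not prove the theorem. Moreover the way you frame the dichotomy (``common conic fiber'' versus ``common plane fiber'') is not literally exhaustive even after the constraint $\Pa\subset\Pf$ is imposed: there are admissible pairs lying in distinct conic fibers \emph{and} distinct plane fibers, and one has to see why they do not produce new orbits.

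The paper closes this as follows. Suppose $\omega_3\in\mathfrak{C}_{\omega_0}$ and $\omega_4\in\mathfrak{C}_{\omega_1}$ with $\omega_0\ne\omega_1$; complete to generators $\omega_0,\omega_1,\omega_2$ of $\Pb$. Then $\Pa\subset\Pf$ is equivalent to $\langle\omega_2,\omega_3,\omega_4\rangle\subset\Pf$, which by the discussion of the sextic $D_{\omega_2}$ forces $\overline{\omega_3\omega_4}\subset Q_{\omega_2}$, i.e.\ $\omega_4\in\D{T}_{\omega_3}Q_{\omega_2}$. This single hyperplane cuts $\mathfrak{C}_{\omega_1}$ in exactly the two points $\psi(p_a,\omega_1)$ and $\psi(p_b,\omega_1)$, where $p_a$ is the plane-fiber parameter of $\omega_3$ (one checks $\D{T}_{\omega_3}Q_{\omega_2}\cap\langle\mathfrak{C}_{\omega_1}\rangle=\overline{\psi(p_a,\omega_1)\,\omega_1}$). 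Since that secant line passes through $\omega_1\in\Pb$, both choices of $\omega_4$ generate the \emph{same} $\Pa$, namely the common-plane-fiber configuration with $\overline{\omega_3\omega_4}\subset Z$; stability then forces $p_a\ne p_b$, equivalently $\omega_0\notin\D{T}_{\omega_3}\Gr(2,W^*)$, which is exactly the condition from Theorem \ref{dim_3} guaranteeing $\langle\Pb,\omega_3\rangle$ is stable. So the exclusion of ``general position'' is not a brute-force computation with Pl\"ucker quadrics as you anticipate, but a short tangent-hyperplane argument; without it (or an equivalent), your case split is an unproven assumption. The rest of your outline --- the identification of $X$ as $\mathfrak{C}_{\omega}$ in Case A and as two disjoint pencils in Case B, and the verification of stability via Lemma \ref{plane_isotropic} or via Corollary \ref{unst_sub} applied to a stable $\D{P}^3$ --- is consistent with the paper.
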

	\begin{proof}
		Once again we start by writing $\Pa=\langle \Pb, \omega_3, \omega_4\rangle$ with $\Pb=\langle \omega_0,\omega_1,\omega_2\rangle$, a plane of general type, and $ \omega_3, \ \omega_4$ lying on the scroll $Z$ constructed from $\Pb$. Adopting the notations introduced in the former sections, we call $\psi:\Pb \times \D{P}^1\to \Gr(2,W^*)$ the morphism such that $\im(\psi)=Z$ and $\pi:Z\to \Pb$ the conic fibration over $\Pb$. Applying theorem \ref{dim_le3}, we have that $\Pa\cap \Gr(2,W^*)$ has dimension 1; hence we can also suppose that $\omega_3, \ \omega_4$ lie on a (irreducible) curve entirely contained in $Z$. We show that if $\Pa$ satisfies the hypotheses of the theorem, then such a curve is either a conic
		$\mathfrak{C}_{\omega}=\pi^{-1}(\omega)$ for a point $\omega \in \Pb$, or a line $\overline{\omega_3\omega_4}$ entirely contained in $Z$.	
		From the construction of $Z$, each of its points belongs to a unique fiber of $Z \xrightarrow{\pi} \Pb$, hence the following possibilities might 
		occur:
		\begin{itemize}
		\item If $\omega_3, \: \omega_4$ lie on the same fiber of $\pi$, there exists then a point in $\Pb$, $\omega_0$ let's say, such that $\omega_i \in \mathfrak{C}_{\omega_0}, \ i=3,4$. Up to an appropriate change of coordinates on $W$, we might then suppose that the $\omega_i$s $i=0,1,2$ have the form appearing in \ref{list_plane} and that $\omega_3, \ \omega_4$ are the tensors: 
			$$ \omega_3=\DM{e_1 \wedge e_2} , \hspace{3mm} \omega_4=\DM{e_4\wedge e_5}.$$
			In this case $\Pa\cap \Gr(2,W^*)$ consist of the smooth conic $\mathfrak{C}_{\omega_0}$.
			To check the stability of $\Pa$ it is enough to see that for any $\omega\in \mathfrak{C}_{\omega_0}, \ \omega\ne \omega_i, \ i=3,4$, $\langle \Pb,\omega\rangle$
			is stable by theorem \ref{dim_3} and therefore the entire $\Pb$ is stable (as no 3-dimensional subspace of $W$ destabilizes $\langle \Pb,\omega\rangle$, the same holds for $\Pa$).

		\item Suppose now that $\omega_3, \: \omega_4$ don't belong to the same fiber of $\pi$. 
		Without loss of generality we can assume that $\omega_3 \in \mathfrak{C}_{\omega_0}:=\pi^{-1}(\omega_0)$ and $\omega_4 \in \mathfrak{C}_{\omega_1}:= \pi^{-1}(\omega_1)$. 
		For this choice of $\omega_3$ and $\omega_4$, $\Pa\subset \Pf$ holds if and only if $\langle \omega_2, \omega_3, \omega_4 
		\rangle\subset \Pf$. As described in section \ref{sect_hw}, in order to have $\langle \omega_2, \omega_3, \omega_4 
		\rangle\subset \Pf$, the line $\overline{\omega_3\omega_4}$ must be contained in the sextic $D_{\omega_2}= \D{T}_{\omega_2}\Pf\cap Q_{\omega_2}\cap \Pf$ hence we must have $\overline{\omega_3\omega_4}\subset Q_{\omega_2}$. This last inclusion
		 occurs if and only if 
		$\omega_4 \in \D{T}_{\omega_3} Q_{\omega_2}$. 
The conic $\mathfrak{C}_{\omega_1}$ intersects the hyperplane $\D{T}_{\omega_3} Q_{\omega_2}$ in two (possibly coincident) points and these are thus the only two points $\omega$ on $\mathfrak{C}_{\omega_1}$ leading to an inclusion \mbox{$\langle \Pb,\omega_3,\omega\rangle\subset \Pf$.}
We denote by $p_a$ the point on $\D{P}^1$ such that $\omega_3=\psi(p_a, \omega_0)$ and we observe that we have the following equalities:
$$\D{T}_{\omega_3}Q_{\omega_2}\cap \mathfrak{C}_{\omega_1}= \overline{\psi(p_a,\omega_1)\omega_1}\cap \Gr(2,W^*)=\overline{\psi(p_a,\omega_1)\omega_1}\cap \mathfrak{C}_{\omega_1}$$
(to see this just notice that $\overline{\psi(p_a,\omega_1)\omega_1}=\langle \mathfrak{C}_{\omega_1}\rangle \cap \D{T}_{\omega_3}Q_{\omega_2}$);
this intersection consists of a pair of points $\psi(p_a, \omega_1), \ \psi(p_b,\omega_1)$ for $p_b\in \D{P}^1$ ($p_a\equiv p_b$ whenever $\omega_0\in \D{T}_{\omega_3}\mathfrak{C}_{\omega_1}$). We can then suppose $\omega_4=\psi(p_a,\omega_1)$ 
so that, by the construction of $Z$, \mbox{$\overline{\omega_3\omega_4}\subset Z$.} The linear space $\Pa=\langle\Pb,\omega_3, \omega_4\rangle$ will intersect $\Gr(2,W^*)$ along the pair of lines $$l_a=\psi(\{p_a\}\times \overline{\omega_0\omega_1}), \  l_b=\psi(\{p_b\}\times \overline{\omega_0\omega_1})$$
and it will be stable whenever $p_a \ne p_b$ (otherwise the two lines would coincide preventing the stability of $\Pa$ by proposition \ref{codim_int}).
$p_a\ne p_b$ is indeed equivalent to requiring $\omega_0 \notin \D{T}_{\omega_3}\Gr(2,W^*)$ and this condition ensures the stability of $\langle \Pb,\omega_3\rangle$ (by theorem \ref{dim_3}) and consequently of the entire $\Pa$. 
We can then choose the basis $e_1,\dots e_6$ of $W^*$ in such a way that the $\omega_i$s, for $i=0,1,2$ have the form appearing in \ref{list_plane}
and that $\omega_3,  \ \omega_4$ are the tensors: 
			$$\omega_3=(e_1-e_5\DM{) \wedge (}e_2+ e_4) \hspace{3mm} \omega_4= \DM{(e_1-e_5) \wedge (e_3+e_6)}.$$

		\end{itemize}
			\end{proof}
\section{Stable linear systems of dimension 5}
We still need to treat the 5-dimensional case (from theorem \ref{dim_le3} we know indeed that there are no stable linear systems of generic rank 4 and of dimension $\ge 6$).
	We prove the following:
	\begin{thm} Let $\Pa \subset \Pf$ be a stable linear space of dimension 5. Then $\Pa$ is $SL(W)$ equivalent to the space:
	$$ \DM{\langle}e_1\wedge e_4 + e_2\wedge e_5, \ e_1 \wedge e_6+ e_3\wedge e_5, \ e_2\wedge e_6 - e_3\wedge e_4, \ (e_1-e_5)\wedge (e_2+ e_4), (e_1-e_5)\wedge (e_3+e_6), (e_2+e_4)\wedge (e_3+e_6)\DM{\rangle}$$
	and $\Pa$ will meet the Grassmannian $\Gr(2,W^*)$ along a pair of disjoint planes.
	
	\end{thm}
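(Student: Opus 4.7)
The plan is to combine theorem \ref{dim_le3} (which attaches a scroll $Z\subset \Lambda$ to any plane $\Pb$ of general type contained in $\Pa$) with a divisor-class analysis on $Z\simeq \D{P}^2\times \D{P}^1$, and finally to inherit stability from theorem \ref{dim_4}.  Fix $\Pb=\pi_g$.  By theorem \ref{dim_le3}, $S:=\Pa\cap \Gr(2,W^*)$ is $2$-dimensional and lies inside $Z$, which is embedded in $\Lambda\simeq \D{P}^8$ by $\mathcal{O}(1,2)$; since $\Pa\subset \Lambda$ is cut out by three sections of $\mathcal{O}(1,2)$ that all vanish on $S$, if the $2$-dimensional part of $S$ has divisor class $aH_1+bH_2$ then $h^0(Z,\mathcal{O}(1-a,2-b))\geq 3$, which by direct enumeration forces $(a,b)\in\{(1,0),(0,1),(0,2)\}$, geometrically $S$ equal to $\pi^{-1}(l)$, $\Pi_p$, or $\Pi_a\cup \Pi_b$ respectively.

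The class $H_1$ is ruled out by dimension: $\mathrm{span}(\pi^{-1}(l))\simeq \D{P}^5$ meets $\Pb$ in the line $l$, so $\langle S,\Pb\rangle$ would have dimension $6$.  The class $H_2$ forces $\Pa=\langle \Pi_p,\Pb\rangle$, and a short computation in the decomposition $\bigwedge^2 W^*=\bigwedge^2 C\oplus (C\wedge D)\oplus \bigwedge^2 D$ (coming from the presentation of $\Pb$ as a plane of general type) shows that such a $\Pa$ automatically contains the antipodal plane $\Pi_{p'}$ for $p'$ defined by $p_0p_1'+p_1p_0'=0$; unless $p$ is self-antipodal, i.e.\ $p\in\{[1:0],[0:1]\}$, this forces $S$ to have class $\geq 2H_2$, a contradiction.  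In the self-antipodal case, the $3$-dimensional subspace $U_p^{\perp}\subset W$ (the annihilator of the $U_p\subset W^*$ with $\Pi_p=\D{P}(\bigwedge^2 U_p)$) is totally isotropic for every form in $\Pa$: it annihilates $\Pi_p$ tautologically, and $\bigwedge^2 U_p^{\perp}$ lies in the summand $\bigwedge^2 C^{\perp}\oplus \bigwedge^2 D^{\perp}$ orthogonal to $C\wedge D$, so $\Pb\subset C\wedge D$ also vanishes on it; by the geometric criterion of section \ref{stab_geom}, $\Pa$ is then not stable.

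We are therefore left with $S=\Pi_a\cup \Pi_b$ and $\Pa=\langle \Pi_a,\Pi_b\rangle$ for distinct $a,b\in \D{P}^1$, and the containment $\Pb\subset \Pa$ becomes, in the block decomposition above, the antipodal relation $a_0b_1+a_1b_0=0$.  Up to the $\D{P}^1$-reparametrization coming from the $SL(W)$-stabilizer of $\Pb$, we normalize to $a=[1:1]$, $b=[1:-1]$, which yields $\Pi_b=\Gr(2,\langle e_1-e_5,e_2+e_4,e_3+e_6\rangle)$ and the explicit generators in the statement.  Stability is inherited from a $4$-dimensional subspace: choosing $\omega_3=\psi(\omega_0,a)$ and $\omega_4=\psi(\omega_1,a)$ in $\Pi_a$ for two distinct points $\omega_0,\omega_1\in \Pb$, the $4$-dimensional $\langle \Pb,\omega_3,\omega_4\rangle$ intersects $\Gr(2,W^*)$ in two distinct lines, hence falls in the second orbit of theorem \ref{dim_4} and is stable; by the contrapositive of corollary \ref{unst_sub} the whole $\Pa$ is stable.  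The main obstacle is the single-plane elimination, which requires both recognizing the automatic appearance of the antipodal partner $\Pi_{p'}$ and identifying the destabilizing isotropic $U_p^{\perp}$ in the self-antipodal situation.
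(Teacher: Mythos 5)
Your proof is correct, but it reaches the classification by a genuinely different route than the paper. The paper argues pointwise and sequentially: it writes $\Pa=\langle \Pb,\omega_3,\omega_4,\omega_5\rangle$, places $\omega_3,\omega_4$ on distinct fibers $\mathfrak{C}_{\omega_0},\mathfrak{C}_{\omega_1}$, forces $\omega_4\in\D{T}_{\omega_3}\Gr(2,W^*)\cap\mathfrak{C}_{\omega_1}$ via the quadric cone $Q_{\omega_2}$ exactly as in the $4$-dimensional proof, then shows $\omega_5$ must be one of the two points $\psi(p_a,\omega_2),\psi(p_b,\omega_2)$ on $\mathfrak{C}_{\omega_2}$, so that $\Pa\cap\Gr(2,W^*)$ is the pair of planes $\psi(\{p_a\}\times\Pb)\cup\psi(\{p_b\}\times\Pb)$; the degenerate case $p_a=p_b$ is excluded by the smoothness statement of Proposition \ref{codim_int}. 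You instead work globally on $Z\simeq\D{P}^2\times\D{P}^1$: since $\Pa$ is cut out in $\Lambda$ by three linear forms restricting to sections of $\mathcal{O}(1,2)$, the divisorial part of $\Pa\cap Z$ can only have class $H_1$, $H_2$ or $2H_2$, and you eliminate the first two by a span computation and by exhibiting either the forced antipodal partner $\Pi_{p'}$ or (in the self-antipodal case $p\in\{[1:0],[0:1]\}$, where $\Pi_p=\Gr(2,C)$ or $\Gr(2,D)$) the totally isotropic $3$-space $U_p^{\perp}$. I checked the two nonobvious computations — that $\langle\Pi_p,\Pb\rangle$ contains $\Pi_{p'}$ with $p_0p_1'+p_1p_0'=0$, and that $\Pb\subset\langle\Pi_a,\Pi_b\rangle$ iff $a,b$ are antipodal — and both hold in the decomposition $\bigwedge^2C\oplus(C\wedge D)\oplus\bigwedge^2D$. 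The final stability transfer via Theorem \ref{dim_4} and the contrapositive of Corollary \ref{unst_sub} is exactly the paper's own mechanism. Your approach buys a cleaner conceptual picture (the antipodal involution on the pencil of planes, and a built-in explanation of why exactly two planes appear), at the cost of needing the divisorial-factor argument on the Cox ring of $\D{P}^2\times\D{P}^1$; the paper's approach is more computational but handles the non-reduced degeneration explicitly through Proposition \ref{codim_int}, whereas in your write-up the double-plane subcase of class $2H_2$ is only implicitly absorbed into the single-plane analysis — worth one sentence to say that $2\Pi_a$ reduces to the $\Pi_p$ case since $\Pa=\langle\Pi_a,\Pb\rangle$ already.
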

	
	\begin{proof}
	Write $\Pa$ as $\Pa=\langle \Pb, \omega_3, \omega_4, \omega_5\rangle$ with $\Pb=\langle \omega_0,\omega_1,\omega_2\rangle$ a plane of general type and the $\omega_i$s belonging to a divisor of $Z\xrightarrow{\pi}\Pb, \ Z=\psi(\D{P}^1\times \Pb)$, the scroll built from $\Pb$ . 
	As $\Pa\cap \Gr(2,W^*)$ has dimension 2, it can't be contained in a unique fiber of $\pi$ so that we might suppose that $\omega_3, \in \mathfrak{C}_{\omega_0}:=\pi^{-1}(\omega_0), \ \omega_4 \in \mathfrak{C}_{\omega_1}:=\pi^{-1}(\omega_1)$. Adopting the same argument presented for the 4-dimensional case, 
	we  see that from the condition $\Pa \subset \Pf$, we may assume that $\omega_4= \mathfrak{C}_{\omega_1}\cap \D{T}_{\omega_3}\Gr(2,W^*)$ so that the line $l_a:=\overline{\omega_3 \omega_4}$ is entirely contained in $Z$. 
	The intersection of $\langle \Pb, \omega_3,\omega_4\rangle$ with $\Gr(2,W^*)$ will then consist of $l_a$ and of a 
	second line $l_b\subset Z$ and these will have the form:
	$$ l_a=\psi(\{p_a\}\times \overline{\omega_0\omega_1}), \ l_b=\psi(\{p_b\}\times \overline{\omega_0\omega_1})$$
	for a pair of points $p_a, \ p_b$ on $\D{P}^1$.	
	We also saw that $l_a\cup l_b$ locates all the points \mbox{$\omega\in\psi(\D{P}^1 \times \overline{\omega_0 \omega_1})$} for which we have $\langle \Pb,\omega_3, \omega\rangle \subset \Pf$ and $\langle \Pb, \omega_4, \omega\rangle \subset \Pf$. The point $\omega_5$ can not belong to $\psi(\D{P}^1\times \overline{\omega_0\omega_1})$ (otherwise we would get $\Pa \simeq \D{P}^4$) hence we might suppose that $\omega_5 \in \mathfrak{C}_{\omega_2}:= \pi^{-1}(\omega_2)$. Again, from the proof of theorem \ref{dim_4}, we know that there exist at most two points on $\mathfrak{C}_{\omega_2}$ leading to an inclusion 
	$\Pa \subset \Pf$ and we see that the pair defined by: $$\omega_5:=\psi(p_a,\omega_2) \hspace{5mm} \omega_5':=\psi(p_b,\omega_2)$$
	clearly satisfy $\Pa=\langle \Pb, \omega_3, \omega_4, \omega_5\rangle= \langle \Pb, \omega_3, \omega_4, \omega_5'\rangle\subset \Pf$. Such a $\Pa$ is stable whenever $p_a \ne p_b$, this is due to the stability of $\langle \Pb, \omega_3, \ \omega_4\rangle$, (apply theorem \ref{dim_4}) that ensures that no 3-dimensional subspaces of $W$ 
	destabilizes $\Pa$. In this case $\Pa \cap \Gr(2,W^*)$ consists of the two planes $\psi(\{p_a\}\times \Pb)$ and $\psi(\{p_b\}\times \Pb)$. If ever $p_a= p_b$ we would get that $\Pa \cap \Gr(2,W^*)$ consists of a double plane supported on $\psi (\{p_a\}\times \Pb)$, preventing the stability of $\Pa$ 
	by proposition \ref{codim_int}.
	For an appropriate choice of the basis $e_1, \ldots e_6$ we can thus suppose that the $\omega_i$s, $i=0,1,2$ have the form appearing in \ref{list_plane} and that $\omega_3, \ \omega_4, \omega_5$ are the tensors:
	$$ \omega_3= e_1-e_5\wedge e_2+ e_4, \ \omega_4= e_1-e_5\wedge e_3+e_6, \ \omega_5= e_2+e_4\wedge e_3+e_6.$$
	\end{proof}
\section{Appendix: 4-dimensional linear systems of skew-symmetric forms and instanton bundles on cubic threefolds}

We now illustrate one of the main motivation behind our interest in the GIT stability of linear systems of skew-symmetric forms, more precisely of those having dimension 4, 
that is their relation with moduli spaces of instanton bundles on cubic threefolds.

Let's consider then a smooth cubic threefold $X\subset\D{P}^4$. An \textit{instanton bundle} on $X$ is defined as a stable rank 2 vector bundle $\F$ with first Chern class $c_1(\F)=0$ and satisfying the so called \textit{instantonic condition}:
$$H^1(\F(-1))=0.$$
The second Chern class of $\F$ 
is usually referred to as the \textit{charge} of $\F$ and the minimal value of the charge of an instanton on $X$ is $c_2=2$. 
From now on we will only be concerned with instantons of minimal charge that, throughout the rest of the section, will then be simply referred to as instantons. 
The moduli space $\mathcal{M}_X^{in}$ of instantons on a smooth cubic hypersurface $X\subset \D{P}^4$ had been largely studied around the year 2000.
In \cite{MT} and \cite{IM} it was proved the following:
\begin{thm}[\cite{MT} Thm.5.6, \cite{IM} Thm 2.1-Thm 3.2]
There exists a quasi-finite étale morphism of degree 1 $\psi$:
$$\psi: \mathcal{M}_X^{in}\to J(X)$$
inducing an isomorphism of ${\mathcal{M}_X}^{in}$ onto an open subset of the intermediate Jacobian $J(X)$ of $X$.
\end{thm}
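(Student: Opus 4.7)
The plan is to realise $\psi$ as the descent of an Abel--Jacobi map on an auxiliary Hilbert scheme of curves on $X$. First I would invoke the Serre correspondence to attach to each instanton $\F$ of charge $2$ a curve $C_\F\subset X$: from $c_1(\F)=0$, $c_2(\F)=2$, the instantonic vanishing $H^1(\F(-1))=0$, and Riemann--Roch on $X$, one shows that $\F(1)$ is globally generated with five sections, and the zero-locus of a generic section is a locally complete intersection curve $C_\F$ fitting into
$$0\to \mathcal{O}_X\to \F(1)\to \mathcal{I}_{C_\F/X}(1)\to 0.$$
A Chern-class calculation identifies $C_\F$ as an elliptic quintic, and conversely a sufficiently general elliptic quintic $C\subset X$ satisfying $\omega_C\simeq \mathcal{O}_C$ and $h^0(\mathcal{I}_{C/X}(1))=0$ reverses the construction, producing a $\mu$-stable rank-two bundle that automatically satisfies the instantonic condition.

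Let $\mathcal{H}$ denote the irreducible component of the Hilbert scheme of elliptic quintics on $X$ obtained this way. The Serre correspondence realises $\mathcal{H}$ as a $\D{P}^4$-bundle over $\mathcal{M}_X^{in}$, the fibre over $\F$ being $\D{P}(H^0(\F(1)))$. I would then form the Abel--Jacobi map $\Phi:\mathcal{H}\to J(X)$, $[C]\mapsto [C-C_0]$ for a fixed base curve $C_0$, and observe that $\Phi$ is constant on each fibre of $\mathcal{H}\to \mathcal{M}_X^{in}$, since two zero-loci of sections of the same bundle $\F(1)$ are linearly, hence rationally, equivalent cycles on $X$. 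Thus $\Phi$ descends to the desired morphism $\psi$. The étale property would then be proved by a tangent-space comparison: $T_\F \mathcal{M}_X^{in}=\Ext^1(\F,\F)$, and using the long exact sequence associated with the Serre extension together with the vanishings built into the definition of an instanton, one identifies this space with $H^{1,2}(X)\simeq T_0 J(X)$; one further checks that this identification coincides with the differential of $\Phi$ in the horizontal directions, so that $d\psi$ is an isomorphism at every point.

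The main obstacle will be showing that $\psi$ has degree $1$ onto its image, equivalently that it is generically injective. Openness of $\psi(\mathcal{M}_X^{in})$ follows from étaleness by invariance of domain, so the task reduces to ruling out pairs of non-isomorphic instantons $\F,\F'$ whose associated elliptic quintics satisfy $\Phi([C_\F])=\Phi([C_{\F'}])$. My approach would be to exploit the explicit description of $\mathcal{H}$ as a $\D{P}^4$-bundle: étaleness of $\psi$ already forces each generic fibre of $\Phi$ to be a disjoint union of $\D{P}^4$'s, and I would rule out more than one component by a deformation argument coupled with the Clemens--Griffiths theorem on the irreducibility of the principally polarised abelian variety $J(X)$. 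A clean way to conclude is to exhibit even a single pair $(\F,C_\F)$ at which the fibre of $\Phi$ is a single $\D{P}^4$, for instance by a direct cohomological computation on a special elliptic quintic; since $\mathcal{H}$ is irreducible, this propagates generically and yields $\deg\psi=1$, completing the proof.
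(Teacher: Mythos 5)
This statement is not proved in the paper at all: it is quoted verbatim from the literature (\cite{MT}, Thm.~5.6 and \cite{IM}, Thms.~2.1 and 3.2) and used as a black box in the appendix, so there is no internal proof to compare against. Your sketch does reproduce the strategy of those references: Serre construction relating charge-$2$ instantons to elliptic quintics, the Abel--Jacobi map on the Hilbert scheme component $\mathcal{H}$, descent along the projective fibration $\mathcal{H}\to \mathcal{M}_X^{in}$, and \'etaleness via identifying $\Ext^1(\F,\F)$ with the tangent space of $J(X)$. Two corrections, though. First, a numerical slip: Riemann--Roch on the cubic threefold gives $\chi(\F(1))=6$, and with the relevant vanishings $h^0(\F(1))=6$, so the fibres of $\mathcal{H}\to\mathcal{M}_X^{in}$ (equivalently the fibres of the Abel--Jacobi map $\Phi$) are $\D{P}^5$'s, not $\D{P}^4$'s; this is consistent with $\dim\mathcal{H}=10$ and $\dim J(X)=5$.

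Second, and more seriously, your argument for $\deg\psi=1$ has a gap. A quasi-finite \'etale morphism that is not finite need not have constant fibre cardinality, so exhibiting a single instanton over which the fibre of $\Phi$ is one $\D{P}^5$ does not propagate to generic injectivity by irreducibility of $\mathcal{H}$; likewise, the Clemens--Griffiths irreducibility of $J(X)$ says nothing about the number of sheets of $\psi$. The generic injectivity is precisely the content of \cite{IM}, Thm.~3.2, and its proof is a genuine global argument (via an auxiliary family of curves and the geometry of the Fano surface of lines, together with a degeneration/specialization analysis), not a local or purely deformation-theoretic one. If you intend to supply a complete proof rather than a citation, that step is the one requiring real work; everything else in your outline is sound modulo the $\D{P}^4$ versus $\D{P}^5$ correction.
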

Later Druel studied the Gieseker-Maruyama compactification $\mathcal{M}_X(2;0,2,0)$ of ${\mathcal{M}_X}^{in}$ showing that:
\begin{thm}[\cite{Druel} Thm 4.6-Thm 4.8 ]$\mathcal{M}_X(2;0,2,0)$ is a smooth scheme of dimension 5 isomorphic to the blowup of $J(X)$ along the Fano surface $F(X)$ of lines on $X$.
\end{thm}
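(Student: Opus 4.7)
The plan is to extend the étale morphism $\psi:\mathcal{M}_X^{in}\to J(X)$ of Markushevich--Tikhomirov and Iliev--Markushevich to the whole Gieseker--Maruyama moduli space $\mathcal{M}_X(2;0,2,0)$ and then identify the extended map with the blowup of $J(X)$ along $F(X)$. First I would describe the boundary $\partial\mathcal{M}:=\mathcal{M}_X(2;0,2,0)\setminus \mathcal{M}_X^{in}$ explicitly: a semistable sheaf on $X$ with Chern character $(2;0,2,0)$ that fails to be locally free or fails the instantonic condition should, after a short case analysis, fit in an exact sequence
$$0\to \mathcal{I}_{L/X}\to \F\to \mathcal{I}_{L/X}\to 0,$$
where $\mathcal{I}_{L/X}$ is the ideal sheaf of a \textit{unique} line $L\subset X$ (a rank-one stable sheaf with the required Chern classes). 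Sending $\F$ to this line should produce a canonical morphism $\partial\mathcal{M}\to F(X)$ which one expects to realise as a $\D{P}^2$-bundle over the Fano surface.

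For smoothness and dimension I would proceed pointwise by deformation theory: for every semistable $\F$ with the prescribed Chern character it suffices to check that the obstruction space $\Ext^2(\F,\F)_0$ vanishes and that $\Ext^1(\F,\F)_0$ has dimension $5$. On the open part $\mathcal{M}_X^{in}$ this is automatic from the fact that $\psi$ is étale onto an open subscheme of the $5$-dimensional intermediate Jacobian $J(X)$. On the boundary the extension above reduces the computation to cohomology of $\mathcal{I}_{L/X}$ on $X$ and of the normal bundle $\mathcal{N}_{L/X}$ on the line $L$, which can be carried out using the standard ideal-sheaf sequence, Serre duality on the cubic, and Bott-type vanishings; this should give dimension $5$ and the vanishing of $\Ext^2$ uniformly, so that $\mathcal{M}_X(2;0,2,0)$ is smooth of dimension $5$.

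The map $\psi$ would then be extended to $\bar\psi:\mathcal{M}_X(2;0,2,0)\to J(X)$ by sending a boundary point associated with a line $L$ to the Abel--Jacobi image of $L$ in $F(X)\subset J(X)$. The verification splits into three parts: $\bar\psi$ is an isomorphism over $J(X)\setminus F(X)$ (this is essentially the content of the Markushevich--Tikhomirov/Iliev--Markushevich theorem together with the fact that away from $F(X)$ the instantonic locus already exhausts the Chern character), the fiber of $\bar\psi$ over each $[L]\in F(X)$ is isomorphic to $\D{P}(\Ext^1(\mathcal{I}_{L/X},\mathcal{I}_{L/X}))\cong \D{P}^2$, and the scheme structure on $\partial\mathcal{M}$ agrees with that of a projectivized normal bundle. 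Invoking the universal property of the blowup of a smooth subvariety of a smooth variety one then upgrades $\bar\psi$ to the claimed isomorphism $\mathcal{M}_X(2;0,2,0)\cong \Bl_{F(X)} J(X)$.

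The main obstacle is the last step: matching $\partial\mathcal{M}$ scheme-theoretically with $\D{P}(\mathcal{N}_{F(X)/J(X)})$. One has to compare three things, namely the local deformation space of a boundary sheaf, the tangent space $T_{[L]}F(X)\cong H^0(\mathcal{N}_{L/X})$ given by Fano's classical description, and the ambient tangent space of $J(X)$ at $[L]$ (computed via the Abel--Jacobi differential and Hodge theory on the cubic). The natural identifications on the two sides a priori differ by a twist, so the delicate point is to track these isomorphisms compatibly and conclude that $\bar\psi$ contracts $\partial\mathcal{M}$ to $F(X)$ not merely birationally but with exactly the normal data of a blowup along a smooth centre.
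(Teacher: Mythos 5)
This theorem is quoted from Druel (\cite{Druel}); the paper offers no proof of it, so the only internal material to compare your sketch against is the description of the boundary that the paper records immediately after the statement --- and that description already contradicts the key step of your proposal. You claim that every non-instanton point of $\mathcal{M}_X(2;0,2,0)$ is an extension $0\to \mathcal{I}_{L/X}\to \F\to \mathcal{I}_{L/X}\to 0$ for a unique line $L\subset X$, so that the boundary is a $\D{P}^2$-bundle over $F(X)$. In fact the boundary has two four-dimensional components. The strictly semistable locus consists of S-equivalence classes $\mathcal{I}_{l_1}\oplus\mathcal{I}_{l_2}$ for \emph{pairs} of possibly distinct lines, so it is parametrized by $\mathrm{Sym}^2 F(X)$, not fibred over $F(X)$, and under the extended Abel--Jacobi map it dominates a translate of the theta divisor rather than contracting onto $F(X)$. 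The component that is contracted to $F(X)$ --- the exceptional divisor of the blowup --- is the one your case analysis omits entirely: the stable, non-locally-free sheaves $\F_C=\ker\bigl(H^0(\mathcal{O}_C(1pt))\otimes\mathcal{O}_X\to\mathcal{O}_C(1pt)\bigr)$ attached to smooth conics $C\subset X$. Since the conics with a fixed residual line $L$ are cut out by the $\D{P}^2$ of planes through $L$, this component is the $\D{P}^2$-bundle over $F(X)$ that you were looking for, but it is realised by conics, not by self-extensions of $\mathcal{I}_{L/X}$. Without these corrections, the extension of $\psi$, the fibre computation over $[L]\in F(X)$, and the appeal to the universal property of the blowup all attach to the wrong locus.

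A secondary gap: at the strictly semistable points the Gieseker--Maruyama space is a GIT quotient, so smoothness there is not settled by checking $\Ext^2(\F,\F)_0=0$ and $\dim\Ext^1(\F,\F)_0=5$ for the polystable representative $\mathcal{I}_{l_1}\oplus\mathcal{I}_{l_2}$, whose automorphism group is positive-dimensional; one needs a Luna-slice analysis of the quotient of the deformation space by $\mathrm{Aut}(\F)$. This is precisely where a purely pointwise deformation-theoretic argument, as proposed, would be insufficient.
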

The ``standard'' Gieseker-Maruyama compactification is obtained by means of semistable torsion free sheaves, the boundary \mbox{$\mathcal{M}_X(2;0,2,0)\setminus {\mathcal{M}_X}^{in}$} is exhaustively described in loc.cit as well.
A sheaf $\F\in \mathcal{M}_X(2;0,2,0)\setminus {\mathcal{M}_X}^{in}$ belongs to one of the following families:
\begin{enumerate}[label=(\roman*)]
\item $\F\simeq \ker( H^0(\mathcal{O}_C(1pt))\otimes {\mathcal{O}_X} \to \mathcal{O}_C(1pt))$ for a smooth conic $C\subset X$. Notice that since the line bundle $\mathcal{O}_C(1pt)$ is generated by its global sections, the evaluation morphism  $H^0(C, \mathcal{O}_{C}(1pt))\otimes \mathcal{O}_X \to \mathcal{O}_{C}(1pt)$ is surjective so that its kernel $\F$ is a torsion free sheaf fitting in a short exact sequence:
	\begin{equation}\label{ses_conic}
	0\longrightarrow \F_{C} \longrightarrow  H^0(\mathcal{O}_C(1pt))\otimes {\mathcal{O}_X}\longrightarrow \mathcal{O}_{C}(1pt)\longrightarrow 0.
	\end{equation}
	A sheaf $\F$ of this kind stable.
	\item $\F\simeq \mathcal{I}_{l_1}\oplus \mathcal{I}_{l_2}$ for a pair of (possibly coincident) lines $l_1,\ l_2$ on $X$. This time $\F$ is semi-stable but clearly not stable.
	
\end{enumerate}

Sheaves of type $(i)$ are parametrized by smooth conics on $X$ whilst sheaves of type $(ii)$ are parametrized by the symmetric square of $F(X)$. 
These families locate, respectively, divisors $\mathcal{B}_X' $ and $\mathcal{B}_X''$ of $\mathcal{M}_X(2;0,2,0)$.

\subsection{ The GIT moduli space of Pfaffian representations of cubic threefolds}
The results that we have just recollected show that we can identify the moduli space ${\mathcal{M}_X}^{in}$ with an open subset of the intermediate Jacobian $J(X)$, but its ``natural'' compactification $\mathcal{M}_X(2;0,2,0)$ is not isomorphic to $J(X)$.
In addition to this, the Gieseker-Maruyama compactification reveals to be troublesome  when applied to threefolds acquiring singularities since in these cases the boundary seems too difficult to treat.
From these issues it is rather natural to ask whether it is possible to find a new compactification of ${\mathcal{M}_X}^{in}$ whose construction can also be extended to singular cubics.
These questions lead us to construct a new moduli space associated to cubic threefolds, the one parameterizing the skew-symmetric presentation maps for the sheaves corresponding to points in $\mathcal{M}_X(2;0,2,0)$. 
\paragraph{Skew-symmetric resolutions of sheaves}
\label{Skew-sym-res}
\

The starting point of our construction is the computation of the minimal resolutions (in $\D{P}^4$) of sheaves $\F$ on $X$ such that $[\F]\in \mathcal{M}_X(2;0,2,0)$.
\begin{itemize}
\item If $[\F]\in {\mathcal{M}_X}^{in}$, it was observed in Beauville that $\F(1)$ is a rank 2 Ulrich bundle on $X$ so that $\F$ has the following resolution:
\begin{equation}\label{ses-prel}
0\longrightarrow {\mathcal{O}_{\D{P}^4}(-2)}^{\oplus 6} \overset{M}\longrightarrow \mathcal{O}_{\D{P}^4}(-1)^{\oplus 6}\overset{N}\longrightarrow \F\longrightarrow 0
\end{equation}
where $M$ is a $6\times 6$ skew-symmetric matrix whose entries are linear forms on $\D{P}^4$. This also means that, since $X=\Supp(\F)=\Supp(\coker(M))$, $M$ provides a \textit{Pfaffian representation} of $X$ that is, $X$ is defined by the equation $\Pf(M)=0$. 
Notice that conversely, if we are given $M$, a skew-symmetric matrix of size 6 whose entries are linear forms and having generic rank 6, we get a short exact sequence of $\mathcal{O}_{\D{P}^4}$-modules:

$$0\longrightarrow {\mathcal{O}_{\D{P}^4}(-1)}^{\oplus 6}\overset{M}\longrightarrow {\mathcal{O}_{\D{P}^4}}^{\oplus 6}\longrightarrow \coker(M) \longrightarrow 0;$$

$\coker(M)$ is a sheaf supported on the cubic $X$ defined by the equation $\Pf(M)=0$ and computing its cohomology we deduce that its restriction to the smooth locus of $X$ is a vector bundle.  
In particular, if $X$ is non-singular, it is easy to show that $\coker(M)\otimes \mathcal{O}_{\D{P}^4}(-1)$ is an instanton bundle. 
\item If $[\F]\in \mathcal{M}_X(2;0,2,0)\setminus {\mathcal{M}_X}^{in}$ we can easily compute,  from the resolution of $\mathcal{O}_C(1pt)$ and \ref{ses_conic} if ever $[\F]\in \mathcal{B}_X'$ or from the resolutions of ideal sheaves of lines if $[\F]\in \mathcal{B}_X''$ instead, 
that the minimal resolution of $\F$ in $\D{P}^4$ has the following form:
\begin{equation}\label{res-boundary}
0\xrightarrow{\ }\OOO(-3)^{\oplus 2}\rightarrow
\OOO(-3)^{\oplus 2}\oplus \OOO(-2)^{\oplus 6}\xrightarrow{B}\OOO(-1)^{\oplus 6}
\xrightarrow{\ }\F\xrightarrow{\ }0,
\end{equation}
in which $B=(\beta'|\beta)$ is a 6-by-8 matrix obtained by concatenation of a $6\times 2$ matrix of quadratic forms $\beta'$ with a $6\times 6$ skew-symmetric matrix of linear forms $\beta$ such that $\Pf(\beta)=0$.
Whenever $[\F]\in \mathcal{B}_X'$ is the sheaf associated to a conic \mbox{$C\subset X$}, choosing coordinates $X_0,\ldots, X_4$ on $\D{P}^4$ in such a way that $C$ has equations \mbox{$X_4=X_3=X_1^2-X_0X_2=0$}, the matrix $\beta$ can be reduced to the form:
$$
\beta = \begin{pmatrix}0&
0&
0&
{X}_{4}&
0&
{-{X}_{3}}\\
0&
0&
{-{X}_{4}}&
0&
{X}_{3}&
0\\
0&
{X}_{4}&
0&
0&
{X}_{2}&
{-{X}_{1}}\\
{-{X}_{4}}&
0&
0&
0&
{-{X}_{1}}&
{X}_{0}\\
0&
{-{X}_{3}}&
{-{X}_{2}}&
{X}_{1}&
0&
0\\
{X}_{3}&
0&
{X}_{1}&
{-{X}_{0}}&
0&
0\\
\end{pmatrix}
$$
The matrix $\beta$ defines a 4-dimensional linear system of skew-symmetric forms having generic rank 4 and having rank 2 along the conic $C$.

If $[\F]\in \mathcal{B}_X''$ is the point defined by the sheaf $\F\simeq \mathcal{I}_{l_1}\oplus \mathcal{I}_{l_2}$ and we choose coordinates such that 
$l_1$ is defined by the equations $\{X_0=0, \ X_1=0,\ X_2=0\}$ and $l_2$ is defined by  $\{X_2=0, \ X_3=0,\ X_4=0\}$
we have:

\begin{equation*}\label{matrix-lines}
\beta=\begin{pmatrix}
0&X_2&-X_1&0&0&0\\
-X_2&0&X_0&0&0&0\\
X_1&-X_0&0&0&0&0\\
0&0&0&0&X_4&-X_3\\
0&0&0&-X_4&0&X_2\\
0&0&0&X_3&-X_2&0\\
\end{pmatrix}
\end{equation*}

This time $\beta$ defines a 4-dimensional linear system of generic rank 4 that has rank 2 along $l_1 \cup l_2$.

By theorem \ref{dim_4}, we can therefore see that in each case, the matrix $\beta$ belongs to one of the stable $SL(6,\D{C})$-orbits of linear systems of skew-symmetric forms of generic rank 4.
\end{itemize}

\paragraph{The GIT moduli space of Pfaffian representations} 
\

Keeping the notation formerly adopted, we denote by $W$ and $V_5$ two complex vector spaces of dimension 6 and 5 respectively. We set:  $$\mathcal{P}:=\D{P}({V_5}^*\otimes \bigwedge^2 W^*)\simeq \D{P}^{74}$$ the projective space of $6\times 6$ skew-symmetric matrices whose entries are elements in ${V_5}^*$. We consider $\mathcal{P}^{in}$ the open parameterizing matrices $M \in \mathcal{P}$ such that the cubic defined by the equation $\Pf(M)=0$ is smooth.   Let $\mathfrak{M}^{in}$ be the moduli space of torsion sheaves on $\D{P}^4$ with supports on smooth cubic hypersurfaces $X\subset \D{P}^4$, whose restrictions to $X$ are instanton bundles:
$$\mathfrak{M}^{in}:=\{[\F] \mid \F \ \text{is an instanton on a smooth cubic} \ X\subset \D{P}^4\};$$
define $\mathfrak{M}$ as the closure of $\mathfrak{M}^{in}$ in the moduli space of sheaves on $\D{P}^4$. We call \mbox{$\mathcal{U}\subset |\mathcal{O}_{\D{P}^4}(3)|$} 
the open subset of $|\mathcal{O}_{\D{P}^4}(3)|$ parameterizing smooth cubics. As for what was previously discussed, we deduce that we have a commutative diagram:
$$\xymatrix{
	\mathcal{P}^{in} \ar[r]^{\tau} \ar[dr]_{\Pf} & \mathfrak{M}^{in} \ar[d]^{\rho} \\
	& |\mathcal{O}_{\D{P}^4}(3)|
}
$$ 

where $M\mapstoo {\tau} \coker (M)\otimes \mathcal{O}_{\D{P}^4}(-1)\mapstoo{\rho} \Supp(\coker (M))=\{\Pf(M)=0\}$. 
According to \cite{Bea}, the morphism $\tau:\mathcal{P}^{in}\to \mathfrak{M}^{in}$ is a principal bundle with structure group $PGL(W)$;
the group $GL(W)$ acts on the resolution \ref{ses-prel} of an instanton $\F$ by:
$$g\cdot(M,N)=(gM g^T, Ng^{-1}), \ g \in GL(W).$$

Since our aim is to construct a new moduli space related to instanton bundles on a cubic threefold $X$ and as taking free resolutions in $\D{P}^4$, we can associate  elements in $\mathcal{P}$ to each point in $\mathcal{M}_X(2;0,2,0)$,
we study the moduli space $\mathfrak{P}$ of Pfaffian representations of cubic threefolds. The group $GL(W)$ acts on $\mathcal{P}$ by conjugation, 
therefore $\mathfrak{P}$ can be obtained by means of Geometric Invariant Theory (GIT) as the GIT quotient:
$$\mathfrak{P}:=\mathcal{P}^{ss}\git SL(W)$$
where $\mathcal{P}^{ss}$ is the open parameterizing semistable matrices. 
Note that from \ref{pfaff_ss} and \ref{dim_4}, 
every presentation map of sheaves $[\F]\in \mathcal{M}_X(2;0,2,0)$, with $ X\in \mathcal{U}$ is defined by a semistable point in $\mathcal{P}$ and locates therefore a point in the moduli $\mathfrak{P}$. 
By the construction of the GIT quotient (\cite{GIT}, \cite{Dol}), we get the existence of an open $\mathfrak{P}^s\subset \mathfrak{P}$ that is a geometric quotient for the $SL(W)$ action on $\mathcal{P}^s$ (the set of stable points); this together with the fact that $\mathcal{P}$ is a linear space of dimension 74, implies that $\mathfrak{P}$ is a 39-dimensional irreducible compact projective scheme. 
$\Pf$ and $\tau$ extends to rational maps $\tau: \mathcal{P}\dashrightarrow \mathfrak{M}$ and $\Pf: \mathcal{P}\dashrightarrow |\mathcal{O}_{\D{P}^4}(3)|$, both of them are $GL(W)$ invariant and induce thus a commutative diagram:
\begin{equation}\label{triangle_diag}
\xymatrix{
	\mathfrak{P} \ar@{-->}[r]^{\overline{\tau}} \ar@{-->}[dr]_{\overline{\Pf}} & \mathfrak{M} \ar[d]^{\rho} \\
	& |\mathcal{O}_{\D{P}^4}(3)|.
}
\end{equation}
$\overline{Pf}$ is a rational map whose generic fiber is compact and of dimension 5, this means in particular that for $X\in \mathcal{U}, \ \overline{Pf}^{-1}(X)$ is a new compactification of $\mathcal{M}_X^{in}$ different from the ``standard'' Gieseker-Maruyama one.
The map $\overline{\tau}$ is a birational morphism (inducing an isomorphism between the quotient of $\mathcal{P}^{in}\cap \mathcal{P}^s$ and $\mathfrak{M}^{in}$) and it is our objective to get a detailed description of the locus $\mathcal{S}:=\{ \beta \in \mathcal{P} \mid \Pf(\beta)=0\}$ at neighborhoods of points in $\mathcal{S}\cap \mathcal{P}^s$; this could indeed help us to understand the behavior of $\overline{\tau}$ at the boundary of $\mathfrak{M}$.

\end{document}